\title{Star-shaped trajectories of certain billiards around a triangle}
\author{Takeo Noda, Shin-ichi Yasutomi and Masamichi Yoshida}
\newtheorem{thm}{Theorem}[section]
\newtheorem{lem}[thm]{Lemma}
\newtheorem{prop}[thm]{Proposition}
\newtheorem{con}[thm]{Conjecture}
\theoremstyle{definition}
\newtheorem{example}{Example}[section]
\theoremstyle{remark}
\numberwithin{equation}{section}
\begin{document}

\maketitle
\footnote[0]{2020 {\it Mathematics Subject Classification}.  37E10, 37E45, 51M09, 51M04;}

\begin{abstract}
We explore the triangle outer billiards map in points  at infinity
in the hyperbolic plane, focusing on the rotation number. 
Building on Dogru and Tabachnikov’s work, which established the conditions for triangles where the rotation number of the billiard map is $1/3$, we examine cases where the rotation number is $2/5$. We provide a sufficient condition for this rotation number and show its necessity for large isosceles triangles. The results are framed within the context of the Beltrami-Klein model. We concludes with a conjecture based on the findings.
\end{abstract}

\section{Introduction}
Dogru and Tabachnikov \cite{DT} studied the polygonal outer billiards map in the hyperbolic plane, also referred to as the dual billiard. For more on dual billiards, see Chapter 9 in \cite{T}. Let $C$ be a convex  in the hyperbolic plane. From a point $v$ outside $C$, two tangent lines can be drawn, intersecting $C$ at points $u_1$ and $u_2$. Assume that the direction of $vu_2$ is counterclockwise relative to $vu_1$, with an angle less than $\pi$. Define $F(v)$ as the point $w$ on the extension of $vu_1$ such that $|vu_1| = |u_1w|$. By taking limits, $F$ extends naturally to the set of infinity points, where it is denoted as $f$ (see Figure \ref{billiard2}).
Let $\rho(C) \in (0, 1/2)$ represent the rotation number of $f$. In \cite{DT}, a class of $n$-gons ($n\geq 3$) is identified where the rotation number is $1/n$, and $f$ has a hyperbolic $n$-periodic orbit. These polygons are termed "large." They also provided a specific condition for $C$ to be large, particularly when $C$ is a triangle.

\begin{thm}[Dogru and Tabachnikov \cite{DT}]\label{Dogru and Tabachnikov}
    $\triangle P_1P_2P_3$ is large if and only if $H > 1$ where $H$ is given by any of the following equal expressions: for $i=1,2,3$
\begin{align*}
H&=\sinh h_i \sinh a_i=\sin \alpha_i \sinh a_{i+1} \sinh a_{i+2}=2\sinh s \tanh r=\\
&=2\sqrt{\sinh  s \sinh (s-a_1) \sinh (s-a_2) \sinh (s-a_3)}=\\
&=4\sin(K/2)\cosh(a_1/2)\cosh(a_2/2)\cosh(a_3/2),
\end{align*}
where  $a_i$ is the length of the side corresponding to the point $P_i$,
$s=(a_1+a_2+a_3)/2$, $\alpha_i$ is the angle of $P_i$, $h_i$ is the altitude
dropped on the $i$-th side, $K$ is  the area, $r$ is the radius of the inscribed circle and the index $i$ is cyclic with respect to $\mod 3$.
\end{thm}
We note that in Theorem \ref{Dogru and Tabachnikov}, the rotation number equals $1/3$ when the condition $H \geq 1$ holds, with equality occurring when $f$ has a unique $3$-periodic orbit.

In a more generalized setting, Mozgawa \cite{M} explored a similar billiard problem and proved a theorem analogous to Poncelet’s porism,
using two ovals instead of circles. Mozgawa et al. \cite{CMM}, \cite{M} referred to this as a bar billiard.
 Cima et al. \cite{CGM} examined a bar billiard problem where the unit circle is inside
 the curve ${x^{2m}+y^{2m}=2},\ m\in {\mathbb{Z}_{>0}}$,
 and found that the rotation number is $\frac{1}{4}$ and the associated map is not conjugate to a rotation,
 except when $m=1$. Theorem \ref{Dogru and Tabachnikov} can also be viewed as addressing a bar billiard problem (see Figure \ref{Bar billiard})
 involving a circle and a triangle.
Cie\'{s}lak, Martini, and Mozgawa \cite{CMM}		
also provided the condition that the rotation number of $f$ is $2/5$, when a circle is placed inside the unit circle. 
In this paper, we provide a sufficient  condition to ensure  that the rotation number is $2/5$.
More precisely, if the distance between one vertex of a triangle and the line connecting the other two vertices falls within a certain range, then the rotation number attributed to the bar billiard is 2/5. In this context, the distance between two points is defined in relation to the Beltrami-Klein model.
For certain isosceles triangles, we show that the condition is a necessary condition.
We remark that if the rotation number attributed with  the
bar billiard is $2/5$, for a certain initial point the trajectory of  the
bar billiard is a pentagram.
The paper is organized as follows.
In Section 2, we discuss cases where a polygon is placed inside the unit circle, and we present some fundamental properties of this problem. In Section 3, we introduce the notations related to the Beltrami-Klein model, which play a crucial role in describing our results. 
In Section 4, we present our main result, which provides a sufficient condition for the rotation number to be $2/5$. In Section 5, we present additional results. Finally, in Section 6, we provide a necessary and sufficient condition for the rotation number to be $2/5$ for sufficiently large isosceles triangles.
Finally, in Section 7, we will present a conjecture.

\begin{figure}
    \centering
\begin{tikzpicture}
\draw(0,0) circle (2);
\draw [](0.2*2, 0.1*2)--(-0.4*2, 0.44641*2)--(-0.4*2, -0.24641*2)--cycle;
\draw[->] (0.984166*2, -0.177249*2)--(-0.784777*2, 0.619778*2);
);

 \draw (0.984166*2, -0.177249*2)node[right]{$v$};
 \draw (-0.6, 0.3)node[right]{$C$};

 \draw (-0.784777*2+0.1, 0.619778*2)node[left]{$f(v)$};
 \end{tikzpicture}
    \caption{Outer billiard map}
    \label{billiard2}
\end{figure}

\begin{figure}
    \centering
\begin{tikzpicture}
\draw plot[variable=\t, smooth] ({0.6*cos(\t r)+0.5*(0.6*sin(\t r)+0.4)},{0.6*sin(\t r)+0.4});

\draw[->] (2*1, 0)--(-0.256971, 0.837107*2);
\draw[->] (-0.547039*2+0.837107, 0.837107*2)--(0.109269*2-0.994012, 2*-0.994012);
\draw[->] (0.109269*2-0.994012, 2*-0.994012)--(0.419377*2+0.907812, 0.907812*2);
\draw[->] (0.419377*2+0.907812, 0.907812*2)--(-0.982513*2-0.186196, -0.186196*2);
);
\draw plot[variable=\t, smooth] ({2*cos(\t r)+1*sin(\t r)},{2*sin(\t r)});
 \draw (2*1, 0)node[right]{$v_0$};
 \draw (-0.547039*2+0.837107, 0.837107*2)node[above]{$v_1$};
\draw (0.109269*2-0.994012, 2*-0.994012)node[below]{$v_2$};
\draw (0.419377*2+0.907812, 0.907812*2)node[right]{$v_3$};
\draw (-0.982513*2-0.186196, -0.186196*2)node[left]{$v_4$};
 \end{tikzpicture}
    \caption{Bar billiard}
    \label{Bar billiard}
\end{figure}

\section{Fundamental properties}
Let $D$ be the  disk $\{(x,y)\in {\mathbb R}^2\mid x^2+y^2<1\}$.
Let $O=(0,0)$ denote the origin.
Let $U$ be a closed convex set in $D$.
For $v\in {S^1}$, we define  $\psi_{U}(v) \in {S^1}$  as the point  closest 
to $v$ in a counterclockwise direction among  the points $w$ such that $vw$ are 
tangent to $U$. 
More precisely,
we define  a map $F$ on $D\times S^1$ to $\mathbb{R}$  by
for $(u,v)\in D\times S^1$
\begin{align*}
F(u,v):=\text{the gradient of the line $uv$ with the cordinates by the basis $\{v, R(v)\}$},
\end{align*}\
where $R(v)$ is rotated at right angles to $v$.
Let us define $\theta:S^1 \to \mathbb{R}$ as
for $v\in S^1$ $\theta(v):=\min \{F(u,v)\mid u\in {U}\}$.
Since $F$ is continuous on ${U}\times S^1$, 
$\theta$ is also continuous on $S^1$.
For $v\in S^1$, $\psi_{U}(v)$ is defined as the intersection of 
$S^1$ and the line passing through $v$ with the gradient ${\theta}(v)$ by the basis $\{v, R(v)\}$.
The following lemma is straightforward.

\begin{lem}\label{cer-1}
Let $U$ be a closed convex set in $D$.
Then, $\psi_{U}$ is a homeomorphism of ${S^1}$.
\end{lem}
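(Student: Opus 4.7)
The plan is to factor $\psi_U$ through a continuous strictly monotone lift on $\mathbb{R}$ and then invoke the standard fact that a degree-one, strictly monotone, continuous self-map of $S^1$ is a homeomorphism. Write $v(t) = (\cos t, \sin t)$ and let $\tilde\psi_U : \mathbb{R} \to \mathbb{R}$ denote a continuous lift satisfying $\psi_U(v(t)) = v(\tilde\psi_U(t))$. The three things to check are continuity, strict monotonicity, and degree one.

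Continuity of $\tilde\psi_U$, and hence of $\psi_U$, is essentially already in the setup: the text states that $\theta$ is continuous on $S^1$, and $\psi_U(v)$ is defined as the second intersection of $S^1$ with the line through $v$ of slope $\theta(v)$ in the rotating frame $\{v, R(v)\}$. This intersection depends continuously on $v$ and $\theta(v)$, so $\psi_U$ is continuous and a continuous lift exists.

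The substantive step is strict monotonicity of $\tilde\psi_U$. Geometrically, as $v$ rotates counterclockwise the counterclockwise tangent line from $v$ to the convex body $U \subset D$ rotates continuously, and its second intersection with $S^1$ cannot reverse direction without the line ceasing to be a tangent on the prescribed side. Rigorously, I would argue by contradiction: suppose $t_1 < t_2 < t_1 + 2\pi$ with $\tilde\psi_U(t_1) = \tilde\psi_U(t_2) =: s_0$, and set $w = v(s_0)$. Both chords $v(t_1)w$ and $v(t_2)w$ are then tangent to $U$; but from the external point $w$ there are only two tangent lines to $U$, one on each side of $U$, and the counterclockwise selection rule at $v(t_1)$ and at $v(t_2)$ forces their tangents to touch $U$ on the same side. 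Hence $v(t_1)$ and $v(t_2)$ both equal the non-$w$ intersection of that single tangent line with $S^1$, contradicting $t_1 \ne t_2$. Once monotonicity holds, $\tilde\psi_U(t + 2\pi) - \tilde\psi_U(t)$ is a continuous integer multiple of $2\pi$, hence constant; inspecting the limit as $U$ shrinks to a point (where $\psi_U$ tends to the identity) pins this constant to $2\pi$, giving degree one.

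Combining these three properties, $\tilde\psi_U$ is a strictly increasing continuous bijection of $\mathbb{R}$ commuting with $t \mapsto t + 2\pi$, so its inverse is also a continuous monotone lift, and $\psi_U$ descends to a homeomorphism of $S^1$. The main obstacle is the monotonicity argument: the continuity and degree-one steps are essentially formalities, whereas monotonicity requires keeping careful track of which of the two tangent lines from an external point is selected by the counterclockwise convention, and this is where the convexity of $U$ and the specific selection rule actually enter.
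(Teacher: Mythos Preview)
The paper gives no proof of this lemma---it simply declares it ``straightforward'' and later, in the proof of Lemma~\ref{fund-4}(1), supplies a separate local argument for orientation preservation. So there is no approach in the paper to compare against; you are filling in what the authors omitted.

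Your continuity and injectivity (hence monotonicity of the lift) arguments are sound. The degree-one step, however, contains a genuine slip: when $U$ shrinks to a single point $P$, the map $\psi_P$ does \emph{not} tend to the identity. Indeed $\psi_P(v)$ is the second intersection of the line $vP$ with $S^1$; for $P=O$ this is the antipodal map, and in general $\rho(\psi_P)=\tfrac{1}{2}$ (cf.\ the proof of Lemma~\ref{fund-4}(3) in the paper). The conclusion you want is still true, but for a different reason: once you know $\psi_U:S^1\to S^1$ is continuous and injective, compactness already makes it a homeomorphism onto its image, and a continuous injection of $S^1$ into itself is automatically surjective (the image is both open and closed in $S^1$). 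Equivalently, injectivity of the descended map forces the integer $k$ in $\tilde\psi_U(t+2\pi)=\tilde\psi_U(t)+2\pi k$ to satisfy $|k|=1$, and strict increase gives $k=1$. With this correction your proof is complete; the homotopy to a point is unnecessary.
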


Let $\pi:{\mathbb R}\to {S^1}(={\mathbb R}/{\mathbb Z})$ be the projection
\begin{align*}
\pi(x):=x-\lfloor x\rfloor, \text{\ for }x\in {\mathbb R},
\end{align*}
where $\lfloor x\rfloor$ is an integral part of $x$.
For a homeomorphism $g$ of ${S^1}$, 
let $\overline{g}$ be the lift of $g$ with $\overline{g}(0) \in [0,1)$.  
For a homeomorphism $g$ of ${S^1}$ and $x\in {S^1}$,
we define $g'(x)$ as $\overline{g}'(u)$, $g'_+(x)$ as $\overline{g}_+'(u)$, and  
$g'_-(x)$ as $\overline{g}_-'(u)$ assuming each exists, where $u\in \pi^{-1}(x)$, 
$\overline{g}'(u)$ is the derivative at $u$, 
$\overline{g}_+'(u)$ is the right hand derivative at $u$,
 and $\overline{g}_-'(u)$ is the left hand derivative at $u$.

The following is derived from Lemma 5 (\cite{DT}), but we provide a proof.

\begin{lem}\label{fund-1}
Let $P$ be a point in $D$.
Then, for $v\in {S^1}$ $\psi_{P}'(v)=\dfrac{|P\psi_{P}(v)|}{|vP|}$.
\end{lem}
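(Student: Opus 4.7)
The plan is to carry out an infinitesimal geometric analysis of how the chord through $P$ swivels about $P$ as $v$ moves along $S^1$, and identify the resulting ratio with the ratio of chord-piece lengths.

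First I would identify $\psi_P(v)$ concretely: since $P\in D$ is a single point, the only line through $v$ that is ``tangent'' to the degenerate convex set $\{P\}$ in the sense of the paper is the line $vP$ itself, so $\psi_P(v)$ is the second intersection of the line $vP$ with $S^1$ (counterclockwise from $v$). Parametrise $v$ by arc length $s$ on $S^1$ and $v'=\psi_P(v)$ by arc length $s'$; because the passage from arc length to the $\mathbb{R}/\mathbb{Z}$ normalisation used in the definition of $\psi_P'$ merely divides both $s$ and $s'$ by $2\pi$, we have $\psi_P'(v)=ds'/ds$.

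The core calculation is then the following. Move $v$ by an infinitesimal arc length $ds$ along the tangent $T_vS^1$, and let $\phi$ be the (acute) angle between $T_vS^1$ and the chord $vP$. The component of the motion perpendicular to the chord is $\sin\phi\,ds$, so the chord, constrained to pass through the fixed point $P$, rotates about $P$ through an angle $d\theta$ satisfying
\begin{equation*}
|vP|\,d\theta=\sin\phi\,ds.
\end{equation*}
Applying exactly the same analysis at the other endpoint $v'$, with $\phi'$ the angle between $T_{v'}S^1$ and the chord $v'P$, yields
\begin{equation*}
|v'P|\,d\theta=\sin\phi'\,ds'.
\end{equation*}
The decisive geometric input is the tangent--chord angle theorem on the circle $S^1$: the angles a chord makes with the tangents at its two endpoints are equal, both being the inscribed angle subtending the complementary arc. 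Hence $\phi=\phi'$, and dividing the two displayed equations gives
\begin{equation*}
\frac{ds'}{ds}=\frac{|Pv'|}{|vP|}=\frac{|P\psi_P(v)|}{|vP|},
\end{equation*}
as claimed.

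The main thing to be careful about is not a real obstacle but bookkeeping: one must verify the sign so that the derivative agrees with orientation. Since $\psi_P$ is an orientation-preserving homeomorphism by \lemref{cer-1}, the derivative is positive, matching the positive ratio of distances on the right-hand side. Both $\sin\phi$ and $\sin\phi'$ are strictly positive because $P$ lies in the interior of $D$, which rules out the degenerate case in which the chord $vP$ is tangent to $S^1$ at $v$ (or at $v'$), so the differential equation in $d\theta$ is non-degenerate and the division above is legitimate.
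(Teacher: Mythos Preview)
Your argument is correct. It differs from the paper's proof, which is a one--line limit computation: for a nearby point $x\in S^1$ the two chords $v\psi_P(v)$ and $x\psi_P(x)$ cross at $P$, so by the intersecting--chords (similar--triangles) principle
\[
\frac{|\psi_P(v)\psi_P(x)|}{|xv|}=\frac{|P\psi_P(x)|}{|vP|},
\]
and letting $x\to v$ gives the result immediately. Your route instead tracks the infinitesimal rotation of the chord about $P$ and closes the argument with the tangent--chord angle theorem $\phi=\phi'$. The two arguments are close cousins---the tangent--chord equality is the limiting form of the inscribed--angle equality that underlies the similar triangles---but the paper's version is shorter because it never introduces the auxiliary angle $\theta$ or the tangent directions. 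Your version has the modest advantage of making the orientation and non-degeneracy checks explicit, which the paper leaves to the figure.
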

\begin{proof}
From the Figure \ref{derivative}, we see 
\begin{align*}
\psi_{P+}'(v)&=\lim_{x\to v^+(x\in S^1)}\dfrac{|\psi_{P}(v)\psi_{P}(x)|}{|xv|}\\
&=\lim_{x\to v^+(x\in S^1)}\dfrac{|P\psi_{P}(x)|}{|vP|}=\dfrac{|P\psi_{P}(v)|}{|vP|}.
\end{align*}
Similarly, we have $\psi_{P-}'(v)=\dfrac{|P\psi_{P}(v)|}{|vP|}$.

\begin{figure}
    \centering
\begin{tikzpicture}
\draw(0,0) circle (2);
\draw (-2*0.72413,2*0.68965)--(2,0);
\draw (1.9106,0.5910)--(-1.7377,0.99004);

 \draw (0,0.8)node[above]{$P$};
  \draw (-1.7377,0.99004)node[left]{$\psi_{P}(x)$};
  \draw (-2*0.72413,2*0.68965)node[left]{$\psi_{P}(v)$};

\draw (2,0)node[right]{$v$};
\draw (1.9106,0.5910)node[right]{$x$};
\coordinate (P) at (0,0.8);
\fill (P) circle [radius=1.5pt];

\coordinate (v) at (2,0);
\fill (v) circle [radius=1.5pt];
\coordinate (pv) at (-2*0.72413,2*0.68965);
\fill (pv) circle [radius=1.5pt];
\coordinate (x) at (1.9106,0.5910);
\fill (x) circle [radius=1.5pt];
\coordinate (px) at (-1.7377,0.99004);
\fill (px) circle [radius=1.5pt];

 \end{tikzpicture}
    \caption{$\psi_{P}$}
    \label{derivative}
\end{figure}

\end{proof}

For $v_1,v_2\in {S^1}$, we denote by $arc[v_1,v_2)$ the arc counterclockwise from $v_1$ to $v_2$ that includes $v_1$ and does not include $v_2$. We define $arc(v_1,v_2)$ similarly.

Following Lemma follows from Lemma \ref{fund-1}. 
\begin{lem}\label{fund-1-2} 
Let $P$ be a point in $D$ with $P\ne O$.
Let $u_1, u_2\in {S^1}$ be intersections of the line  $OP$ and ${S^1}$ such that
$|u_1P|<|u_2P|$.
Then, $\psi_{P}'(v)$ is  monotonic increasing for $v\in arc[u_2,u_1)$ 
and monotonic decreasing for $v\in arc[u_1,u_2)$.
\end{lem}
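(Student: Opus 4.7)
The plan is to combine Lemma \ref{fund-1} with the power-of-a-point identity to reduce the statement to the elementary monotonicity of the distance from a moving point on $S^1$ to the fixed point $P$.

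First, I would use Lemma \ref{fund-1} to write $\psi_P'(v)=\frac{|P\psi_P(v)|}{|vP|}$. Because $P$ lies inside $D$ and the line $v\,\psi_P(v)$ is a chord of $S^1$ passing through $P$, the power of $P$ with respect to the unit circle gives the identity
\begin{equation*}
|vP|\cdot|P\psi_P(v)|=1-|OP|^2,
\end{equation*}
which is independent of $v$. Multiplying and dividing by $|vP|$, this turns the conclusion of Lemma \ref{fund-1} into
\begin{equation*}
\psi_P'(v)=\frac{1-|OP|^2}{|vP|^2}.
\end{equation*}

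Next, I would analyze the elementary function $v\mapsto|vP|$ on $S^1$. Placing coordinates so that $P=(d,0)$ with $d=|OP|>0$, so that $u_1=(1,0)$ and $u_2=(-1,0)$, a direct computation with $v=(\cos\alpha,\sin\alpha)$ yields
\begin{equation*}
|vP|^2=1-2d\cos\alpha+d^2,
\end{equation*}
whose derivative in $\alpha$ is $2d\sin\alpha$. Hence $|vP|$ strictly increases on the counterclockwise arc $\mathrm{arc}[u_1,u_2)$ (where $\sin\alpha\ge 0$) and strictly decreases on $\mathrm{arc}[u_2,u_1)$ (where $\sin\alpha\le 0$). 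Since $\psi_P'(v)$ is a positive constant divided by $|vP|^2$, the monotonicities reverse: $\psi_P'(v)$ is strictly decreasing on $\mathrm{arc}[u_1,u_2)$ and strictly increasing on $\mathrm{arc}[u_2,u_1)$, which is exactly the claim.

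I do not foresee a genuine obstacle here; the only minor care is to justify the power-of-a-point identity in the form used (which is immediate since $P$ lies on the chord between $v$ and $\psi_P(v)$ and inside $S^1$), and to note that the coordinate choice with $P$ on the positive $x$-axis loses no generality because the statement is invariant under rotation about $O$. Both points are routine to include in a single short paragraph.
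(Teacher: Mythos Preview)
Your argument is correct. The paper gives no proof beyond the line ``Following Lemma follows from Lemma~\ref{fund-1}'', so there is nothing to compare against in detail; your use of the power-of-a-point identity $|vP|\cdot|P\psi_P(v)|=1-|OP|^2$ to rewrite $\psi_P'(v)=(1-|OP|^2)/|vP|^2$ is the natural way to complete what the paper leaves implicit, and the monotonicity of $|vP|$ on each semicircle then finishes it immediately.
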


We present some fundamental properties of $\psi_{B}$ for a convex polygon $B$ in $D$. The following proposition follows immediately from the proof of Lemma \ref{fund-1}.

\begin{prop}\label{fund-2}
For $n\in {\mathbb{Z}_{>0}}$, let $B$ be a convex $n$ sided polygon in $D$ 
which has vertices $A_1,\ldots,A_n$ counterclockwise.
We set $A_{n+1}=A_1$.
For $1\leq i\leq n$, let $u_i, v_i\in {S^1}$ be intersections of the line  $A_{i}A_{i+1}$ and ${S^1}$ such that
$u_i$ is closer to $A_{i}$ than $A_{i+1}$ (see Figure \ref{u_i}). We set $u_{n+1}=u_1$. 
Then, 
\begin{enumerate}
\item[(1)]
if $v\in arc[u_i,u_{i+1})$ for $1\leq i \leq n$,  $\psi_{B}(v)=\psi_{A_{i+1}}(v)$,
\item[(2)]
if $v\in arc(u_i,u_{i+1})$ for $1\leq i \leq n$,  $\psi_{B}'(v)=\dfrac{|A_{i+1}\psi_{A_{i+1}}(v)|}{|vA_{i+1}|}$,
\item[(3)]
for $1\leq i\leq n$,  $\psi_{B+}'(u_i)=\dfrac{|A_{i+1}\psi_{A_{i+1}}(u_{i})|}{|u_{i}A_{i+1}|}$
 and $\psi_{B-}'(u_i)=\dfrac{|A_{i}\psi_{A_{i}}(u_i)|}{|u_iA_{i}|}$.

\end{enumerate}

\begin{figure}
    \centering
\begin{tikzpicture}
\draw (0,1)--(-1,0)--(0,-1)--cycle;
\draw(0,0) circle (2);
 \draw (0,1)node[right]{$A_1$};
  \draw (-1,0)node[left]{$A_2$};
 \draw (0,-1)node[right]{$A_3$};
\draw[dashed] (0,1)--(0.82287,1.82287);
\draw[dashed] (-1,0)--(-1.822875,0.82287);
\draw[dashed] (0,-1)--(0,-2);

\draw (0.82287,1.82287)node[right]{$u_1$};
\draw (-1.822875,0.82287)node[left]{$u_2$};
\draw (0,-2)node[below]{$u_3$};

\coordinate (A1) at (0,1);
\fill (A1) circle [radius=1.5pt];
\coordinate (A2) at (-1,0);
\fill (A2) circle [radius=1.5pt];
\coordinate (A3) at (0,-1);
\fill (A3) circle [radius=1.5pt];
\coordinate (u1) at (0.82287,1.82287);
\fill (u1) circle [radius=1.5pt];
\coordinate (u2) at (-1.822875,0.82287);
\fill (u2) circle [radius=1.5pt];
\coordinate (u3) at (0,-2);
\fill (u3) circle [radius=1.5pt];

 \end{tikzpicture}

    \caption{$u_i{}'s$ for $n=3$}
    \label{u_i}
\end{figure}
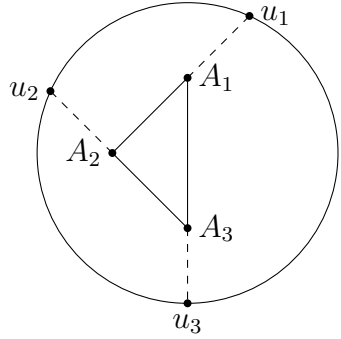

\end{prop}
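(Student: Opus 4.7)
The plan is to establish part (1) first---the geometric identification $\psi_B(v) = \psi_{A_{i+1}}(v)$ on each arc $[u_i, u_{i+1})$---and then obtain parts (2) and (3) as immediate consequences of (1) together with Lemma \ref{fund-1}.

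For part (1), I would fix $v \in arc[u_i, u_{i+1})$ and let $\ell(v)$ denote the line through $v$ and $A_{i+1}$. The task is to show that $\ell(v)$ is a supporting line of $B$ at the vertex $A_{i+1}$, and that its other intersection with $S^1$ is the counterclockwise-closest tangent endpoint from $v$, so that $\psi_B(v) = \psi_{A_{i+1}}(v)$. Since $A_{i+1}$ is a vertex of a convex polygon, $\ell(v)$ is a supporting line iff both neighboring vertices $A_i$ and $A_{i+2}$ lie in the same closed half-plane bounded by $\ell(v)$ (by convexity this then extends to every other vertex of $B$). I would verify this by a monotonicity argument: because $A_{i+1}$ is interior to $D$, as $v$ traverses $arc[u_i, u_{i+1}]$ counterclockwise the direction of $\ell(v)$ (modulo $\pi$) varies strictly monotonically, sweeping from the direction of the line $A_iA_{i+1}$ at $v = u_i$ (where $A_i \in \ell(v)$) to the direction of $A_{i+1}A_{i+2}$ at $v = u_{i+1}$ (where $A_{i+2} \in \ell(v)$). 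This sweep stays inside the exterior supporting cone at $A_{i+1}$, so $A_i$ and $A_{i+2}$ remain on the correct side throughout. To confirm that $\ell(v)$ represents the ccw tangent rather than the cw one, I would check at the endpoint $v = u_i$ that $\psi_B(u_i) = v_i = \psi_{A_{i+1}}(u_i)$ directly, then invoke the continuity of $\psi_B$ (Lemma \ref{cer-1}) and of $\psi_{A_{i+1}}$ to propagate the identification across the full arc.

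Given part (1), part (2) is immediate: on the open arc $(u_i, u_{i+1})$, $\psi_B$ coincides with $\psi_{A_{i+1}}$ in a full two-sided neighborhood of each $v$, so applying Lemma \ref{fund-1} with $P = A_{i+1}$ yields $\psi_B'(v) = \psi_{A_{i+1}}'(v) = |A_{i+1}\psi_{A_{i+1}}(v)|/|vA_{i+1}|$. Part (3) follows in the same spirit, now using one-sided neighborhoods: the right-hand derivative at $u_i$ is computed on the arc $[u_i, u_{i+1})$ with pivot $A_{i+1}$, while the left-hand derivative at $u_i$ comes from the arc $[u_{i-1}, u_i)$ with pivot $A_i$ (apply (1) with the index shifted by one); Lemma \ref{fund-1} then supplies the stated one-sided formulas.

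The main obstacle I anticipate is the geometric verification in part (1), specifically making precise the monotonic rotation of $\ell(v)$ through the supporting cone at $A_{i+1}$ and confirming that $\ell(v)$ is the ccw tangent rather than the cw tangent. Once these points are pinned down at $v = u_i$ by inspection and propagated by continuity, parts (2) and (3) reduce to routine applications of Lemma \ref{fund-1}.
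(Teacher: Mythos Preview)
Your proposal is correct and matches the paper's intent: the paper gives no detailed argument at all, merely asserting that the proposition ``follows immediately from the proof of Lemma~\ref{fund-1}.'' Your plan---establish the identification $\psi_B=\psi_{A_{i+1}}$ on each arc and then read off the derivative formulas from Lemma~\ref{fund-1}---is exactly the content hidden behind that one-line remark, so you are simply supplying the details the paper omits.
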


For an orientation preserving homeomorphism $f$ on ${S^1}$, let $\rho(f)$ be the rotation number of $f$; i.e.,
$$
\rho(f):=\lim_{n\to \infty}\dfrac{\overline{f}^n(x)-x}{n},
$$
where $x\in {\mathbb R}$ and $\overline{f}$ is the lift of $f$ on ${\mathbb R}$
with $\overline{f}(0)\in [0,1)$.

Then, following results(\cite{KB}) are known.

\begin{lem}\label{fund-3}
Let $f$ be an orientation preserving homeomorphism  on ${S^1}$.
\begin{enumerate}
\item[(1)] $\rho(f)$ is well defined.
\item[(2)] $\rho(f)$ is a rational $\dfrac{p}{q}$, if and only if
there exits a $x\in {\mathbb R}$ such that
$\overline{f}^q(x)=x+p$, where $p$ and $q$ are relatively prime positive integers.  
\item[(3)] When $\rho$ is considered as a function on the set of  orientation preserving homeomorphisms on ${S^1}$, 
it is continuous on $C^0$ topology.
\end{enumerate}

\end{lem}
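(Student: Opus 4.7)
For part (1), I fix a lift $\overline{f}:\mathbb{R}\to\mathbb{R}$, which is strictly increasing and satisfies $\overline{f}(x+1)=\overline{f}(x)+1$ because $f$ is an orientation-preserving homeomorphism of $S^1$. Writing $a_n(x):=\overline{f}^n(x)-x$, monotonicity plus periodicity give $0\leq \overline{f}^n(y)-\overline{f}^n(x)\leq 1$ whenever $0\leq y-x\leq 1$, which translates to $|a_n(x)-a_n(y)|\leq 1$ for all $x,y$ (using that $a_n$ is itself $1$-periodic). Decomposing $a_{m+n}(x)=a_m(\overline{f}^n(x))+a_n(x)$ and applying this bound yields the near-subadditivity $|a_{m+n}(x)-a_m(x)-a_n(x)|\leq 1$. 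A standard Fekete-type argument then produces $\lim_n a_n(x)/n$, and the pointwise bound makes the limit independent of $x$.

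For part (2), the ``if'' direction is immediate by induction: $\overline{f}^q(x)=x+p$ gives $\overline{f}^{nq}(x)=x+np$, so $\rho(f)=p/q$. For the converse, set $\overline{g}:=\overline{f}^q$, so $\rho(g)=p$, and consider the continuous $1$-periodic function $h(x):=\overline{g}(x)-x-p$. If $h$ never vanishes, it has constant sign and by compactness satisfies $|h|\geq c>0$; iterating the resulting inequality $\overline{g}(x)\geq x+p+c$ (or its reverse) gives $\rho(g)\geq p+c$, contradicting $\rho(g)=p$. Any zero of $h$ produces the required $x$.

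For part (3), the near-subadditivity established in (1) upgrades to $|a_n(0)/n-\rho(f)|\leq 1/n$. Given $\epsilon>0$, pick $n$ with $2/n<\epsilon$. For this fixed $n$, the map $f\mapsto \overline{f}^n(0)$ is $C^0$-continuous by induction using joint continuity of composition, so in a $C^0$-neighborhood of $f$ one has $|\overline{g}^n(0)-\overline{f}^n(0)|<n\epsilon/2$, and combining the three estimates gives $|\rho(g)-\rho(f)|<\epsilon$.

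The main obstacle is the converse in (2): promoting the asymptotic statement $a_{nq}(0)/(nq)\to p/q$ into the existence of a single point whose $q$-orbit closes up with displacement exactly $p$. The argument hinges on the fact that a continuous, nowhere-zero, $1$-periodic function on $\mathbb{R}$ is uniformly bounded away from zero, which upgrades the pointwise strict inequality $\overline{g}(x)\neq x+p$ into a linear bound on $\overline{g}^n(x)-x$ that contradicts the prescribed rotation number. Parts (1) and (3) are then essentially corollaries of the same near-subadditivity estimate together with the soft fact that iteration is jointly $C^0$-continuous.
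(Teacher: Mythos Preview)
The paper does not prove this lemma; it simply records it as a known result with a citation to Katok--Hasselblatt \cite{KB}. Your argument is the standard one and is essentially correct, so there is nothing substantive to compare.

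Two minor remarks. First, in part~(3) your arithmetic actually yields
\[
|\rho(g)-\rho(f)|\le \frac{2}{n}+\frac{\epsilon}{2}<\frac{3\epsilon}{2}
\]
with the choice $2/n<\epsilon$; take $4/n<\epsilon$ instead and the bound becomes $\epsilon$. Second, the assignment $f\mapsto\overline{f}$ under the paper's normalization $\overline{f}(0)\in[0,1)$ is not globally $C^0$-continuous: it jumps when $f(\pi(0))$ crosses the basepoint (e.g.\ rotations by $1-1/k$ converge to the identity in $C^0$ but their normalized lifts satisfy $\overline{f_k}(0)\to 1\neq 0$). The clean formulation is that $\rho$ is continuous as a map to $\mathbb{R}/\mathbb{Z}$, or equivalently that one works on the space of lifts; locally near any fixed $f$ one can choose lifts continuously, which is all your inductive argument for continuity of $f\mapsto\overline{f}^n(0)$ actually needs. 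Neither point affects the substance.
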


\begin{lem}\label{fund-4}
Let $U$ be a closed convex set in $D$.
\begin{enumerate}
\item[(1)] $\psi_{U}$ is the orientation preserving homeomorphism of ${S^1}$.
\item[(2)] If $V$ is a closed convex set in $D$ with $V \subset U$, then $\rho(\psi_{U})\leq \rho(\psi_{V})$.  
\item[(3)] $\rho(\psi_{U})\leq \dfrac{1}{2}$.
\end{enumerate}
\end{lem}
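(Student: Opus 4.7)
The plan is to handle the three parts in order. For (1), the homeomorphism property is already given by Lemma~\ref{cer-1}; only the orientation-preserving property remains. I would extend the computation of Lemma~\ref{fund-1} to a general closed convex $U$: near each $v \in S^1$, the map $\psi_U$ agrees with $\psi_P$ for some counterclockwise tangent point $P$ of $U$ seen from $v$, so the one-sided derivatives of $\psi_U$ equal the positive ratios $|P\psi_U(v)|/|vP|$. Hence the lift $\overline{\psi_U}$ is strictly increasing, which forces $\psi_U$ to preserve orientation.

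The heart of the lemma is (2). I would first establish the pointwise monotonicity: if $V \subset U$, then $\psi_V(v)$ lies weakly counterclockwise of $\psi_U(v)$ for every $v \in S^1$; equivalently, $\overline{\psi_V}(x) \geq \overline{\psi_U}(x)$ for all $x$ on the standard lifts. The geometric idea is that the counterclockwise tangent line to $U$ from $v$ automatically has $V \subset U$ on its interior side, so it can be rotated counterclockwise about $v$ by a nonnegative angle until it becomes tangent to the smaller set $V$; this rotation moves the second intersection with $S^1$ counterclockwise. In the terminology of the slope function $F$ from the Introduction, this amounts to the observation $\theta_V(v) = \min_{u \in V} F(u,v) \geq \min_{u \in U} F(u,v) = \theta_U(v)$, together with the fact that a larger gradient $\theta(v)$ gives a more counterclockwise second intersection. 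The inequality $\rho(\psi_V) \geq \rho(\psi_U)$ then follows from the classical monotonicity of the rotation number: the pointwise bound iterates to $\overline{\psi_V}^n(x) \geq \overline{\psi_U}^n(x)$, and dividing by $n$ and letting $n \to \infty$ gives the conclusion.

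Part (3) will follow almost immediately from (2). For any point $P \in U$, the singleton $V = \{P\}$ satisfies $V \subset U$, and the map $\psi_{\{P\}}$ sends each $v \in S^1$ to the second intersection of the line $vP$ with $S^1$; this is an involution, so $\psi_{\{P\}}^2 = \mathrm{id}_{S^1}$. Lifting gives $\overline{\psi_{\{P\}}}^2(x) = x+1$, so by Lemma~\ref{fund-3}(2) we have $\rho(\psi_{\{P\}}) = 1/2$, and (2) then yields $\rho(\psi_U) \leq 1/2$.

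The main obstacle is the geometric comparison step inside (2). The intuition that ``smaller obstacle yields a tangent rotated further counterclockwise'' is natural but must be verified carefully, especially to confirm that the minimum of $F(\cdot,v)$ in the basis $\{v, R(v)\}$ really picks out the counterclockwise (as opposed to clockwise) tangent line, and that on the arc of interest the map sending a gradient to its intersection with $S^1$ is monotone in the expected direction. Once this monotonicity is pinned down cleanly, parts (1) and (3) drop out with essentially no additional work.
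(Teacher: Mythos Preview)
Your approach matches the paper's almost exactly for parts (2) and (3): the paper also argues the pointwise gradient comparison $\theta_U(v)\le\theta_V(v)$ to get $\overline{\psi_U}\le\overline{\psi_V}$ and hence $\rho(\psi_U)\le\rho(\psi_V)$, and then applies this with $V=\{P\}$ to deduce $\rho(\psi_U)\le\rho(\psi_P)=\tfrac12$.

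For part (1) there is one imprecision worth flagging. Your claim that ``near each $v\in S^1$, the map $\psi_U$ agrees with $\psi_P$ for some tangent point $P$'' is only true when $U$ is a polygon (this is exactly the content of Proposition~\ref{fund-2}); for a general closed convex $U$ the tangent point moves as $v$ moves, so $\psi_U$ does not literally coincide with $\psi_P$ on any arc. The conclusion you want (that $\overline{\psi_U}$ is strictly increasing) is still true, but your justification as written does not cover the smooth case. The paper sidesteps differentiability entirely: it takes $v'$ just counterclockwise of $v$, picks any $u\in U$ on the tangent line $v'\psi_U(v')$, and compares the gradients of $v\psi_U(v)$ and $vu$ in the basis $\{v,R(v)\}$ to conclude that $\psi_U(v')$ lies counterclockwise of $\psi_U(v)$. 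This is the cleaner route here, since it works uniformly for all closed convex $U$.
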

\begin{proof}
Let us prove (1).
From Lemma \ref{cer-1}, $\psi_{U}$ is a homeomorphism of ${S^1}$.
Let $v\in {S^1}$.
Let $v'$ be a point in ${S^1}$ sufficiently close counterclockwise from $v$.
Then, there exists $u\in U$ which is on the line $v'\psi_{U}(v')$.
$\psi_{U}(v')$ is a point in ${S^1}$ sufficiently close counterclockwise from $\psi_{u}(v)$. 
Since the gradient of the line $v\psi_{U}(v)$ is less than or equal to  the gradient of the line  $vu$ by the basis $\{v, R(v)\}$,
$\psi_{u}(v)$ is a point in ${S^1}$ sufficiently close counterclockwise from $\psi_{U}(v)$. 
Therefore, $\psi_{U}(v')$ is a point in ${S^1}$ sufficiently close counterclockwise from $\psi_{U}(v)$, 
which means that $\psi_{U}$ is the orientation preserving homeomorphism.

Next, we prove (2).
Let $v\in {S^1}$.
Since $V \subset U$, the gradient of the line $\psi_{U}(v)v$ is less than or equal to the gradient of the line $\psi_{V}(v)v$ by the basis $\{v, R(v)\}$.
This means that $\psi_{V}(u)$ lies counterclockwise from $\psi_{U}(v)$.
Therefore, we have $\overline{\psi_{U}}(v)\leq \overline{\psi_{V}}(v)$, which implies
$\rho(\psi_{U})\leq \rho(\psi_{V})$.

Next, we prove (3).
Let $P\in U$ be a point.
It is easy to see that for $P\in D$, $\rho(\psi_{P})=\frac{1}{2}$.
By (2), we have $\rho(\psi_{U})\leq \frac{1}{2}$.

\end{proof}

\begin{lem}\label{fund-5}
Let $B$ be a convex $n$ sided polygon in $D$ such that $n\geq 2$ and
$\rho(\psi_{B})$ is rational. Then, $\psi_{B}$ is not conjugated to a rotation. 
\end{lem}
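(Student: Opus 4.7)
My strategy is to argue by contradiction. Suppose $\psi_B$ is conjugated to a rotation. Writing $\rho(\psi_B)=p/q$ in lowest terms, the conjugacy forces $\psi_B^q=\mathrm{id}$; in particular, both one-sided derivatives of $\psi_B^q$ equal $1$ at every point. I will exhibit a point at which the ratio of the one-sided derivatives of $\psi_B^q$ is strictly greater than $1$, yielding the contradiction.

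The first step is a local computation at each ``corner'' $u_i$ of Proposition \ref{fund-2}, showing that the left derivative strictly exceeds the right derivative. Let $v_i\in S^1$ denote the second intersection of the line $A_iA_{i+1}$ with $S^1$, so that $\psi_{A_i}(u_i)=\psi_{A_{i+1}}(u_i)=v_i$. Setting $a=|u_iA_i|$, $b=|A_iA_{i+1}|$, $c=|A_{i+1}v_i|$, the four points lie in the order $u_i,A_i,A_{i+1},v_i$ along the chord, and Proposition \ref{fund-2}(3) gives
\[
\frac{\psi_B'_-(u_i)}{\psi_B'_+(u_i)} \;=\; \frac{|A_iv_i|\,|u_iA_{i+1}|}{|u_iA_i|\,|A_{i+1}v_i|} \;=\; \frac{(a+b)(b+c)}{ac} \;=\; 1+\frac{b(a+b+c)}{ac} \;>\; 1,
\]
using that $b=|A_iA_{i+1}|>0$ because $B$ is a nondegenerate polygon.

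The second step is to apply this at $x=u_1$ via the one-sided chain rule. This rule is valid since $\psi_B$ is orientation preserving by Lemma \ref{fund-4}(1), so in the lift $\overline{\psi_B}$ is strictly increasing and one-sided approaches to $u_1$ correspond to one-sided approaches to each $\psi_B^j(u_1)$. Hence
\[
\frac{(\psi_B^q)'_-(u_1)}{(\psi_B^q)'_+(u_1)} \;=\; \prod_{j=0}^{q-1}\frac{\psi_B'_-(\psi_B^j(u_1))}{\psi_B'_+(\psi_B^j(u_1))}.
\]
Whenever $\psi_B^j(u_1)$ lies in the interior of some arc $(u_k,u_{k+1})$, $\psi_B$ is differentiable there by Proposition \ref{fund-2}(2), so the corresponding factor equals $1$; the surviving factors correspond to those indices $j$ for which $\psi_B^j(u_1)$ itself is a corner. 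At least one such index occurs (namely $j=0$), and by the first step every surviving factor strictly exceeds $1$. Therefore the product is $>1$, whereas $\psi_B^q=\mathrm{id}$ forces it to equal $1$, the desired contradiction. The only delicate point is the one-sided chain rule along the orbit, which must be invoked carefully using monotonicity of the lift; everything else is a short consequence of the explicit formulas in Proposition \ref{fund-2} together with the positivity of the edge lengths of $B$.
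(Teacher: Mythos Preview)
Your proof is correct and follows essentially the same approach as the paper: both assume conjugacy to a rotation, deduce that the one-sided derivatives of $\psi_B^q$ must coincide at $u_1$, and derive a contradiction via the one-sided chain rule using that $\psi_{B-}'(u_1)>\psi_{B+}'(u_1)$ while $\psi_{B-}'\geq\psi_{B+}'$ elsewhere along the orbit. Your version is in fact more explicit, since you compute the ratio $\psi_{B-}'(u_i)/\psi_{B+}'(u_i)=(a+b)(b+c)/(ac)>1$ directly from the chord-length formula, whereas the paper simply invokes Proposition~\ref{fund-2}.
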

\begin{proof}
Let $\rho(\psi_{B})$ be $\dfrac{p}{q}$, where $p$ and $q$ are relatively prime positive integers.
We suppose that $\psi_{B}$ is conjugated to a rotation.
From Lemma \ref{fund-3}, there exists $u\in {\mathbb R}$ such that
$\overline{\psi_{B}}^q(u)=u+p$.
Since $\psi_{B}$ is conjugated to a rotation, for every $x\in {\mathbb R}$ $\overline{\psi_{B}}^q(x)=x+p$, which 
implies that for every $v\in {S^1}$
\begin{align*}
(\psi_{B}^q)'(v)=1.
\end{align*}
We define  $u_i\in {S^1}$ for $1\leq i\leq n$ as in Lemma \ref{fund-2}.
Therefore, by the chain rule we have
\begin{align}
\psi_{B-}'(u_1)\psi_{B-}'(u^{(1)}_1)\cdots \psi_{B-}'(u^{(q-1)}_1)=1,\label{psiB}\\
\psi_{B+}'(u_1)\psi_{B+}'(u^{(1)}_1)\cdots \psi_{B+}'(u^{(q-1)}_1)=1,\label{psiB2}
\end{align}
where $u^{(i)}_1=\psi_{B}^i(u_1)$ for $1\leq i\leq q-1$.
From Lemma \ref{fund-2}, we see that $\psi_{B-}'(u_1)>\psi_{B+}'(u_1)$ and
$\psi_{B-}'(u^{(i)}_1)\geq \psi_{B+}'(u^{(i)}_1)$, which contradict that  equations (\ref{psiB}) and (\ref{psiB2}).
\end{proof}

\section{Previous result}
Let $D$ be the  disk $\{(x,y)\in {\mathbb R}^2\mid x^2+y^2<1\}$.
$D$ is a hyperbolic geometric model that is called the Beltrami–Klein model(see \cite{L}).
A line of $D$ in the hyperbolic sense is known to be a straight line segment with endpoints on 
the boundary of $D$.
A point on the boundary of $D$ is called  a point at infinity.
We define $d'(P,Q)$ as follows. Let $v_1, v_2\in {S^1}$ be two points at infinity that intersect the hyperbolic line $PQ$ (see Figure \ref{distance}). Then, we set
\begin{align*}
\dfrac{1}{2}\left|\log\left(\dfrac{|v_1Q||v_2P|}{|v_1P||v_2Q|} \right)\right|.
\end{align*}

\begin{figure}
    \centering
\begin{tikzpicture}
\draw(0,0) circle (2);
\draw (-1/2,1.732/2)--(1,0);
 \draw (-1/2,1.732/2)node[left]{$P$};
  \draw (1,0+0.1)node[right]{$Q$};

\draw[dashed] (-1/2,1.732/2)--(-1.42705098312484, 1.40125853844407);
\draw[dashed] (1,0)--(1.92705098312484,-0.535233134659635);
\draw (-1.42705098312484, 1.40125853844407)node[left]{$v_1$};
\draw (1.92705098312484,-0.535233134659635)node[right]{$v_2$};
\coordinate (P) at (-1/2,1.732/2);
\fill (P) circle [radius=1.5pt];
\coordinate (Q) at (1,0);
\fill (Q) circle [radius=1.5pt];

 \end{tikzpicture}

    \caption{$d'(P,Q)$.}
    \label{distance}
\end{figure}

For $P,Q\in D$ and $n\in {\mathbb Z}_{>0}$, we define
$\Delta_n'(P,Q):=\log \left(\dfrac{e^{nd'(P,Q)}+1}{e^{nd'(P,Q)}-1}\right)$.
We define $\delta(P,Q,R)$ as the minimum of $d'(R,S)$, where
$S$ are points on the line $PQ$. In other words, let $RS$ be the perpendicular line drawn from $R$ to the line $PQ$ in the hyperbolic sense, with intersection point $S$. Then, $\delta(P,Q,R)=d'(R,S)$. 
From \cite[Proposition 2.2]{NY},  $\delta(P,Q,R)$ is given by
\begin{align*}
\mathrm{arcsinh}\left(\dfrac{\sqrt{(-\cosh(\alpha-\gamma)+\cosh \beta)(\cosh(\alpha+\gamma)+\cosh \beta)}}{\sinh\gamma} \right),
\end{align*}
where $\alpha=d'(Q,R), \beta=d'(R,P)$ and $\gamma=d'(P,Q)$.
The following theorem, which is a result for $\rho(\psi_{\triangle PQR})=\frac{1}{3}$, is a restatement of Theorem \ref{Dogru and Tabachnikov} using $\Delta_1'$, and  we  note that $\Delta_n'$ plays an important role in this paper.

\begin{thm}[\cite{NY}]
\label{t5}
Let $\triangle PQR$ be a triangle in $D$.
Then, $\dfrac{1}{2}> \rho(\psi_{\triangle PQR})>\dfrac{1}{3}$  if $\delta(P,Q,R)<\Delta'_1(P,Q)$
and $\rho(\psi_{\triangle PQR})=\dfrac{1}{3}$  if $\delta(P,Q,R)\geq\Delta'_1(P,Q)$.
\end{thm}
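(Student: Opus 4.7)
My plan is to recast the theorem as a Poncelet-type closing condition for an inscribed triangle in $S^1$, and then verify this condition using the Beltrami--Klein distance formulas recalled in \S3.

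\emph{Reduction.} By Lemma \ref{fund-3}(2), $\rho(\psi_{\triangle PQR}) = \tfrac{1}{3}$ is equivalent to the existence of some $v_0 \in S^1$ with $\overline{\psi_{\triangle PQR}}^{\,3}(v_0) = v_0 + 1$. Proposition \ref{fund-2}(1) says that on each arc $arc[u_i,u_{i+1})$ the map $\psi_{\triangle PQR}$ coincides with $\psi_{A_{i+1}}$, so every tangent chord in the trajectory passes through exactly one vertex of $\triangle PQR$. Consequently, a 3-periodic orbit $(v_0,v_1,v_2)$ corresponds precisely to an inscribed triangle in $S^1$ whose three sides, in cyclic order, pass through $P$, $Q$, and $R$. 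The theorem thus reduces to showing that such a closing triangle exists if and only if $\delta(P,Q,R) \geq \Delta'(P,Q)$.

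\emph{Sufficient direction.} Assuming $\delta(P,Q,R) \geq \Delta'(P,Q)$, I would parametrise candidate triangles by the point $v_0 \in S^1$: the chord from $v_0$ through $P$ determines $v_1$, the chord from $v_1$ through $Q$ determines $v_2$, and the chord from $v_2$ through $R$ determines a return point $v_0'(v_0)$. Closure requires $v_0 = v_0'(v_0)$. Using the explicit formula for $\delta(P,Q,R)$ given before the theorem in terms of $\alpha, \beta, \gamma$ and the definition of $\Delta_n'(P,Q)$, I expect the fixed-point equation $v_0 = v_0'(v_0)$ to reduce, after hyperbolic simplification, to the inequality $\delta(P,Q,R) \geq \Delta'(P,Q)$, with equality corresponding to the degenerate limit in which the closing triangle just barely exists. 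This identification is the main obstacle and the technical core of the proof.

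\emph{Necessary direction.} Conversely, assume $\delta(P,Q,R) < \Delta'(P,Q)$. By the same closure analysis, no inscribed 3-triangle through $P$, $Q$, $R$ exists, hence no 3-periodic orbit of $\psi_{\triangle PQR}$ exists, and Lemma \ref{fund-3}(2) forces $\rho \neq \tfrac{1}{3}$. To locate $\rho$ above $\tfrac{1}{3}$ I would use monotonicity: deform $R$ along the hyperbolic perpendicular from $R$ to the line $PQ$ outward until reaching $R^{*}$ with $\delta(P,Q,R^{*}) = \Delta'(P,Q)$; then $\triangle PQR \subset \triangle PQR^{*}$, so by Lemma \ref{fund-4}(2) and the sufficient direction $\rho(\psi_{\triangle PQR}) \geq \rho(\psi_{\triangle PQR^{*}}) = \tfrac{1}{3}$, and combined with $\rho \neq \tfrac{1}{3}$ this gives $\rho > \tfrac{1}{3}$. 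Finally, for the strict upper bound $\rho < \tfrac{1}{2}$, suppose $\rho = \tfrac{1}{2}$: Lemma \ref{fund-3}(2) yields a 2-periodic orbit $(v_0,v_1)$ whose chord $v_0 v_1$ would have to serve simultaneously as the counterclockwise tangent from $v_0$ and from $v_1$, forcing $\triangle PQR$ to lie on both sides of this chord, a contradiction for a non-degenerate triangle. Continuity of $\rho$ (Lemma \ref{fund-3}(3)) together with these two bounds completes the proof.
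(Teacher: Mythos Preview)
This paper does not prove Theorem~\ref{t5}: it is quoted from the authors' earlier work \cite{NY} (note the section heading ``Previous result'' and the citation attached to the theorem), and no argument is given here. There is therefore no proof in the present paper to compare your proposal against. The closest substitute is Lemma~\ref{further-1}, which the authors describe as an extension of the corresponding result in \cite{NY}; its proof passes to the upper half-plane model, sends the line $P_1P_2$ to the imaginary axis, computes $\psi_{P_1P_2}^{2n}$ explicitly as $w\mapsto w/k^{2n}$, and reduces the closure condition to the sign of a quadratic discriminant. Specialised to $n=1$, that computation is almost certainly what \cite{NY} does for Theorem~\ref{t5}, and it would fill exactly the step you have left open.

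On your proposal itself: the overall architecture --- reduce $\rho=\tfrac13$ to a Poncelet-type closure condition, then use inclusion-monotonicity of $\rho$ (Lemma~\ref{fund-4}(2)) for the strict inequalities --- is sound and matches how the present paper treats the $\tfrac25$ case in \S4--\S5. But two steps are not yet proofs. First, your ``sufficient direction'' is only an expectation: you write that the closure equation should reduce to $\delta(P,Q,R)\ge\Delta'(P,Q)$ ``after hyperbolic simplification'', but you have not carried out that simplification, and it is precisely the content of the theorem. Second, your reduction asserts that every $3$-periodic orbit has its three sides passing through $P,Q,R$ one each; this does not follow immediately from Proposition~\ref{fund-2}, since a priori two of the three orbit points could lie in the same arc $arc[u_i,u_{i+1})$. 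Ruling this out requires a short extra argument (for instance, using that each $\psi_{A_j}$ has rotation number $\tfrac12$, so a composition in which one vertex is repeated cannot close up with total winding~$1$ in three steps).
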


The following is the example in \cite{NY}.
\begin{example}\label{example1}
Let $P=(-\frac{1}{4}, \frac{\sqrt{3}}{4})$, $Q=(-\frac{1}{4}, -\frac{\sqrt{3}}{4})$,
$R=(\frac{1}{2}, 0)$, and $S=(-\frac{1}{4}, 0)$.
We set $v_1=(-\frac{1}{4}, \frac{\sqrt{15}}{4})$, 
$v_2=(-\frac{1}{4}, -\frac{\sqrt{15}}{4})$,
$v_3=(1,0)$,
and 
$v_4=(-1,0)$, which are points at infinity.
Then, $\triangle PQR$ is an equilateral triangle in $D$. See Figure
\ref{fig:example3-1}.
Then, 
\begin{align*}
&d'(P,Q)=\dfrac{1}{2}\log\dfrac{|v_1Q||Pv_2|}{|v_2Q||Pv_1|}
=\log \dfrac{\sqrt{5}+1}{\sqrt{5}-1},\\
&\Delta'_1(P,Q)=\log\dfrac{e^{d'(P,Q)}+1}{e^{d'(P,Q)}-1}=\log \sqrt{5}.
\end{align*}
Conversely, we have
\begin{align*}
\delta(P,Q,R)=d'(R,S)=\dfrac{1}{2}\log\dfrac{|v_3S||Rv_4|}{|v_4S||Rv_3|}=\log \sqrt{5}.
\end{align*}
Therefore, $\rho(\psi_{\triangle PQR})=\dfrac{1}{3}$.

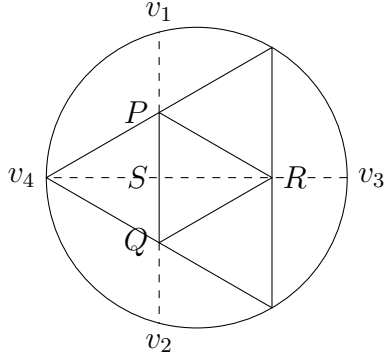
\begin{figure}
    \centering
\begin{tikzpicture}
\draw(0,0) circle (2);
\draw (-1/2,1.732/2)--(-1/2,-1.732/2)--(1,0)--cycle;
 \draw (-1/2,1.732/2)node[left]{$P$};
 \draw (-1/2,-1.732/2)node[left]{$Q$};
  \draw (1,0)node[right]{$R$};
 \draw (-1/2,0)node[left]{$S$};
 
\draw[dashed] (-1/2,1.732/2)--(-1/2,1.936491673103708);
\draw[dashed] (-1/2,-1.732/2)--(-1/2,-1.936491673103708);
\draw[dashed] (2, 0)--(-2,0);
 \draw (-1/2,1.936491673103708)node[above]{$v_1$};
\draw (-1/2,-1.936491673103708)node[below]{$v_2$};
\draw (-2, 0)node[left]{$v_4$};
\draw (2,0)node[right]{$v_3$};
\draw (1,1.732)--(1,-1.732)--(-2,0)--cycle;

 \end{tikzpicture}
    \caption{$\triangle PQR$ in Example\ref{example1}.}
    \label{fig:example3-1}
\end{figure}

\end{example}

\section{Pentagram}
This section focuses on the conditions for $\rho(\psi_{\triangle PQR})=\dfrac{2}{5}$. 
\begin{lem}\label{pentagon-1}
Let $\triangle PQR$ be a triangle in $D$.
Then, the following are equivalent:
\begin{enumerate}
\item[(1)]
There  exists a periodic point $\alpha\in {S^1}$
with $\psi_{\triangle PQR}^5(\alpha)=\alpha$.
\item[(2)]
$\rho(\psi_{\triangle PQR})=\dfrac{2}{5}$.
\item[(3)]
There  exist $5$ points $\alpha_i \in [0,1)$ 
 for $0\leq i\leq 4$ such that
 $\psi_{\triangle PQR}(\pi(\alpha_i))=\pi(\alpha_j)$ where
 $j\equiv i+2 \mod 5$ and  
 $\alpha_0<\alpha_1<\ldots <\alpha_4$ (see Figure \ref{fig:pentagon}).
\item[(4)]
 There  exists a point $\alpha\in {S^1}$
 such that for $x\in \pi^{-1}(\alpha)$, $\overline{\psi_{\triangle PQR}}^5(x)=2+x$
  \end{enumerate}
\end{lem}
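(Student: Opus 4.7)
The plan is to establish the four-way equivalence via the cycle $(2) \Rightarrow (4) \Rightarrow (1) \Rightarrow (2)$ together with $(2) \Leftrightarrow (3)$, relying on Lemma~\ref{fund-3}, Lemma~\ref{fund-4}(3) and the previous Theorem~\ref{t5}.

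First I would dispose of the easy implications $(2) \Rightarrow (4)$ and $(4) \Rightarrow (1)$. The first is a direct application of Lemma~\ref{fund-3}(2) with the coprime pair $p=2,\,q=5$: there exists $x \in \mathbb{R}$ with $\overline{\psi_{\triangle PQR}}^5(x) = x+2$, and setting $\alpha = \pi(x)$ gives (4). The second is obtained by projecting: $\psi_{\triangle PQR}^5(\alpha) = \pi(\overline{\psi_{\triangle PQR}}^5(x)) = \pi(x+2) = \pi(x) = \alpha$.

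The delicate step is $(1) \Rightarrow (2)$. Given $\psi_{\triangle PQR}^5(\alpha) = \alpha$, lift to $x \in \pi^{-1}(\alpha)$ to obtain an integer $k$ with $\overline{\psi_{\triangle PQR}}^5(x) = x+k$. Induction yields $\overline{\psi_{\triangle PQR}}^{5n}(x) = x + nk$, so by the definition of the rotation number $\rho(\psi_{\triangle PQR}) = k/5$. The upper bound from Lemma~\ref{fund-4}(3), $\rho \leq 1/2$, forces $k \leq 2$; the lower bound from Theorem~\ref{t5}, $\rho \geq 1/3$, forces $k \geq 2$. Hence $k=2$ and $\rho = 2/5$. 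This pinching argument, which uses both the general upper bound for convex sets and the triangle-specific lower bound, is the main obstacle: without Theorem~\ref{t5} one could not rule out the possibility $\rho = 1/5$, so it is essential that the statement concerns a triangle rather than a more general convex body.

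It remains to show $(2) \Leftrightarrow (3)$. The direction $(3) \Rightarrow (1)$ is immediate since iterating $i \mapsto i+2 \bmod 5$ five times is the identity, so $\psi_{\triangle PQR}^5(\pi(\alpha_i)) = \pi(\alpha_i)$. For $(2) \Rightarrow (3)$, I would start from the $x$ produced by $(2) \Rightarrow (4)$ and look at the orbit $\{\pi(\overline{\psi_{\triangle PQR}}^i(x)) : 0 \leq i \leq 4\}$. These five points are distinct, since a coincidence would give $\psi_{\triangle PQR}^{j-i}(\pi(x)) = \pi(x)$ with $0 < j-i < 5$ and hence $\rho = m/(j-i)$ for some integer $m$, contradicting $\rho = 2/5$. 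Sorting them as $\alpha_0 < \alpha_1 < \cdots < \alpha_4$ in $[0,1)$ and using that $\psi_{\triangle PQR}$ is an orientation-preserving homeomorphism (Lemma~\ref{fund-4}(1)), the restriction of $\psi_{\triangle PQR}$ to this orbit preserves cyclic order and is therefore a cyclic shift $i \mapsto i + k \bmod 5$; matching the rotation number gives $k=2$, which is exactly (3).
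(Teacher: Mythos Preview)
Your argument is correct and follows essentially the same route as the paper's own proof. Both proofs obtain $(1)\Rightarrow(2)$ by lifting to get $\rho=k/5$ and then invoking Theorem~\ref{t5} (the paper uses the two-sided estimate $1/3\le\rho<1/2$ from that theorem, while you split it into the upper bound from Lemma~\ref{fund-4}(3) and the lower bound from Theorem~\ref{t5}); both handle $(2)\Leftrightarrow(4)$ via Lemma~\ref{fund-3}(2); and both derive $(2)\Rightarrow(3)$ by sorting the five orbit points and reading the induced permutation as a cyclic shift whose step must be $2$. The only difference is organizational: the paper proves the implications pairwise, whereas you run the cycle $(2)\Rightarrow(4)\Rightarrow(1)\Rightarrow(2)$ and treat $(3)$ separately.
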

\begin{proof}
(1)$\to$(2):
By virtue of Lemma \ref{fund-3}, we have $\rho(\psi_{\triangle PQR})=\frac{j}{5}$ for some $1\leq j \leq 4$. However, from Theorem \ref{t5}, we know that $\rho(\psi_{\triangle PQR})=\frac{2}{5}$.\\
(2)$\to$(1):
This follows directly from Theorem \ref{t5}.\\
(3)$\to$(1):
It is obvious.\\
(2)$\to$(3):
From Theorem \ref{t5}, There  exist $\alpha \in {\mathbb R}$ such that
 $\overline{\psi_{\triangle PQR}}^5(\alpha)=\alpha+2$.
We set $f=\overline{\psi_{\triangle PQR}}$.
We show that for $u=f^i(\alpha)+j$ for $i\in {\mathbb Z}_{\geq 0}$ and $j\in {\mathbb Z}$,
$f^5(u)=u+2$.
In fact, we have
$f^5(u)=f^{i+5}(\alpha)+j=f^{i}(\alpha)+j+2=u+2$.
We denote $f^i(\alpha)-\lfloor f^i(\alpha)\rfloor$ for $i=0,\ldots,4$ by $\alpha_j$ for $j=0,\ldots,4$ in order of its value; i.e.,
$\alpha_0<\alpha_1\ldots <\alpha_4$.
Let $I=\{\alpha_i+j\mid 0\leq j \leq 4, j\in {\mathbb Z} \}$.   
We index each element of $I$ as $\alpha_{i+5j}=\alpha_i+j$.
It is easily seen that $f(I)\subset I$ and $f^{-1}(I)\subset I$.
Since $f_I$ is monotonic increasing and bijective transformation on $I$,  
$f_I$ is considered as a shift $k>0$ transformation on $\{\alpha_{i}\mid i\in {\mathbb Z}\}$; i.e.,
$f_I(\alpha_{i})=\alpha_{i+k}$ for $i\in {\mathbb Z}$.
Since $f^5(\alpha_0)=\alpha_0+2=\alpha_{10}$, we have $k=2$, which implies (3).
From Lemma \ref{fund-3}, we have (2)$\to$(4) and (4)$\to$(2).
\end{proof}

\begin{figure}
    \centering
\begin{tikzpicture}
\draw(0,0) circle (2);
\draw (1.3264033,1.49688)--(-2,0)--(1.3264033,-1.496881)--(-0.563833,1.91887766)--(-0.5638337,-1.91887)--cycle;
\draw (0,0.9)--(-0.563833,0)--(0,-0.9)--cycle;

 \draw (0.05,0.9)node[above]{$P$};
 \draw (0.05,-0.95)node[below]{$Q$};
 \draw (-0.563833,0)node[left]{$R$};
\draw (1.3264033,1.49688)node[above]{$\pi(\alpha_0)$};
\draw (-0.563833,1.91887766)node[above]{$\pi(\alpha_1)$};
\draw (-2,0)node[left]{$\pi(\alpha_2)$};
\draw (-0.5638337,-1.91887)node[below]{$\pi(\alpha_3)$};
\draw (1.3264033,-1.49688)node[right]{$\pi(\alpha_4)$};
 \end{tikzpicture}
    \caption{Points $\pi(\alpha_i)$ for $0\leq i \leq 4$. }
    \label{fig:pentagon}
\end{figure}
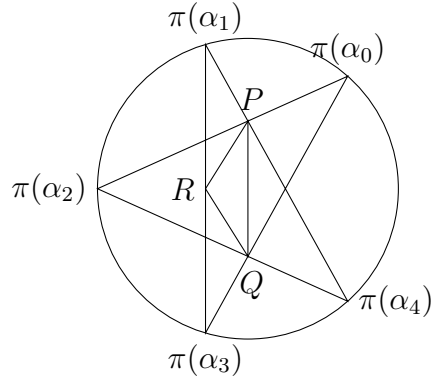

Let ${\mathbb H}$ be the upper half plane ${z \in {\mathbb C} \mid \text{the imaginary part of $z$ is positive}}$ equipped with the metric $ds^2=\frac{dx^2+dy^2}{y^2}$.
It is well-known that $H$ is isomorphic to $D$ (see \cite{L}).

\begin{lem}\label{pentagon-2}
Let $P, Q$ be different points  in $D$.
Let $v_1,v_2\in {S^1}$ be points at infinity, which are intersection points with the line $PQ$ and ${S^1}$.
Suppose that $v_1$ is closer to $P$ than $Q$.
For a point $R$ in $D$, the following statements are equivalent:
\begin{enumerate}
\item[(1)]
There exists a point $w$ at infinity with $w\ne v_i$ for $i=1,2$ such that
$R$ is the intersection of  line $v_1w_2$ and  line $v_2w_1$,
where $w_i$  $(i=1,2)$  are points at infinity with   $w\ne w_i$ such that
 $w_1$ is the intersection of  line $wP$ and $S^1$ and
$w_2$ is the intersection of  line $wQ$ and $S^1$ (see Figure \ref{fig:pentagon2}).  
\item[(2)]
$\delta(P,Q,R)=\dfrac{1}{2}\Delta_1'(P,Q)$.
\end{enumerate}
\end{lem}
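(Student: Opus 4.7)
I plan to pass to the upper half plane model $\mathbb{H}$ via the standard isomorphism, and exploit that both conditions are hyperbolic isometry invariants. After a hyperbolic isometry I may assume that the line $PQ$ is the imaginary axis, that $v_1=\infty$ and $v_2=0$, and that $P=ia$, $Q=ib$ with $a>b>0$ (the hypothesis that $v_1$ is closer to $P$ than to $Q$ forces this ordering). In this normalization $d'(P,Q)=\log(a/b)$ and hence
\[
\tfrac12\Delta_1'(P,Q)=\tfrac12\log\frac{a+b}{a-b}.
\]

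For the direction (1)$\Rightarrow$(2), let $w\in\mathbb{R}\setminus\{0\}$ (the excluded values $w=0,\infty$ correspond to $w=v_2,v_1$). Using the standard fact that the hyperbolic geodesic through $w\in\mathbb{R}$ and $it\in\mathbb{H}$ meets the boundary again at $-t^{2}/w$, I get $w_1=-a^{2}/w$ and $w_2=-b^{2}/w$. Then the geodesic $v_1w_2$ is the vertical line $x=-b^{2}/w$, and the geodesic $v_2w_1$ is the Euclidean semicircle with diameter $[-a^{2}/w,0]$. Their intersection is
\[
R=\left(-\frac{b^{2}}{w},\ \frac{b\sqrt{a^{2}-b^{2}}}{|w|}\right),
\]
so $|R|=ab/|w|$. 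The hyperbolic perpendicular from $R$ to the imaginary axis is the Euclidean semicircle centered at $0$ through $R$, meeting the axis at $S=i\,ab/|w|$. Integrating $ds=d\theta/\sin\theta$ along that semicircle from $S$ to $R$ gives $\delta(P,Q,R)=|\log\tan(\arg R/2)|$, and then the half-angle identity applied to $\cos(\arg R)=\mp b/a$, $\sin(\arg R)=\sqrt{a^{2}-b^{2}}/a$ yields $\delta(P,Q,R)=\tfrac12\log\frac{a+b}{a-b}=\tfrac12\Delta_1'(P,Q)$.

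For the direction (2)$\Rightarrow$(1), the explicit formula for $R$ above shows that as $w$ runs over $\mathbb{R}\setminus\{0\}$ the locus of $R$ is precisely the two Euclidean rays from the origin into $\mathbb{H}$ with slopes $\pm\sqrt{a^{2}-b^{2}}/b$. In the upper half plane model these rays are exactly the two hypercycles at hyperbolic distance $\tfrac12\Delta_1'(P,Q)$ from the imaginary axis (equidistant curves to a vertical geodesic are Euclidean rays from its endpoint on $\partial\mathbb{H}$). Consequently any $R$ satisfying (2) lies on one of them and is realized by a unique $w$.

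The main technical step is the closed-form computation of $R$ and the distance $\delta(P,Q,R)$ in these normalized coordinates; the conceptual content is the identification of the locus traced by the construction with the two hypercycles, which is immediate in $\mathbb{H}$ after the normalization. The only subtlety to watch is keeping the sign of $w$ aligned with the side of $PQ$ on which $R$ sits, so that both equidistant curves are accounted for.
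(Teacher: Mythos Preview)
Your proof is correct and follows essentially the same approach as the paper: both pass to the upper half plane, normalize $P,Q$ onto the imaginary axis with $v_1=\infty$ and $v_2=0$, compute $w_1,w_2$ via the reflection $w\mapsto -t^2/w$, intersect the vertical line $v_1w_2$ with the semicircle $v_2w_1$, and then reduce the perpendicular distance computation to an integral of $d\theta/\sin\theta$ along a circular arc centered at the origin. Your identification of the locus of $R$ with the two equidistant rays (hypercycles) is exactly the paper's observation that $u=\pm v/\sqrt{k^2-1}$, phrased slightly more geometrically; the converse direction is handled identically in both.
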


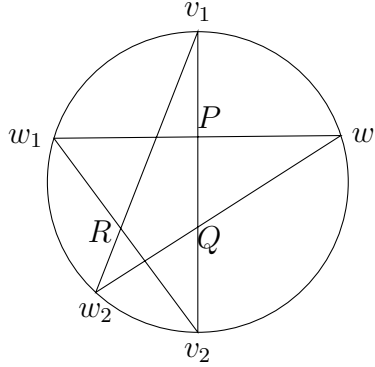
\begin{figure}
    \centering
\begin{tikzpicture}
\draw(0,0) circle (2);
\draw (1.902113,0.618033)--(-1.91348,0.581858)--(0,-2.0)--(0,2.0)--(-1.35714,-1.469062)--cycle;

 \draw (0.15,0.57)node[above]{$P$};
 \draw (0.15,-0.45)node[below]{$Q$};
 \draw (-1,-0.65)node[left]{$R$};
\draw (1.902113,0.618033)node[right]{$w$};
\draw (0,2)node[above]{$v_1$};
\draw (0,-2)node[below]{$v_2$};
\draw (-1.91348,0.581858)node[left]{$w_1$};
\draw (-1.35714,-1.469062)node[below]{$w_2$};

 \end{tikzpicture}
    \caption{Points $v_1$, $v_2$, $w$, $w_1$, and $w_2$. }
    \label{fig:pentagon2}
\end{figure}

\begin{proof}
 There exists  an isometry $f:D\to {\mathbb H}$ such that
 $f(P)=ki$ and $f(Q)=i$, where $k>1$ and $f$. The map $f$ is naturally extended to the map $\Bar{f}$ from $D\cup S^1$ to 
 ${\mathbb H}\cup {\mathbb R} \cup \{\infty\}$ such that $\Bar{f}(v_1)=\infty$ and $\Bar{f}(v_2)=0$. 
 
 \noindent
 First, we show (1)$\rightarrow$(2).
 We set $\Bar{f}(R)=u+vi$ for $u\in {\mathbb R}$ and $v\in {\mathbb R}_{>0}$.
  We assume that $\Bar{f}(w)$ is negative without loss of generality.
  We set $\Bar{f}(w)=-x$. Then, we get that  $\Bar{f}(w_1)=\dfrac{1}{x}$ and  $\Bar{f}(w_2)=\dfrac{k^2}{x}$.
 Since $\Bar{f}(R)$ is on the line between $\infty$ and $1/x$, we have $u=1/x$.
On the other hand, $\Bar{f}(R)$ lies on a circle with $0$ and $\frac{k^2}{x}$ at the two ends of its diameter.

 Therefore, we have
 \begin{align*}
 \left(\dfrac{1}{x}-\dfrac{k^2}{2x}\right)^2+v^2=\dfrac{k^4}{4x^2},
 \end{align*}
 which is equal to 
 \begin{align*}
 u=\dfrac{v}{\sqrt{k^2-1}}.
 \end{align*}
 We denote the set $\{x+yi\in {\mathbb H}\mid x=\dfrac{y}{\sqrt{k^2-1}}\}$ by $L$.
Let $\theta\in (0,\pi/2)$ be the angle between the imaginary axis and $L$.
We set $s=\tan(\theta/2)$.
Then, we have $k=\dfrac{1+s^2}{2s}$, 
 which implies 
 \begin{align}\label{k+1k-1}
 \dfrac{k+1}{k-1}=\dfrac{(1+s)^2}{(1-s)^2}.
 \end{align}
We will calculate the length $d$ of a vertical line from $u+vi$ to the imaginary axis in the hyperbolic sense. Let  $r=\sqrt{u^2+v^2}$.
Then, the length of the curve $r\sin w +ir\cos w (0\leq w\leq \theta)$ is
\begin{align}\label{int_0theta}
d=\int_0^{\theta} \dfrac{dw}{\cos w}= \log\left(\dfrac{1+\tan(\theta/2)}{1-\tan(\theta/2)}\right).
\end{align}
From (\ref{k+1k-1}) and (\ref{int_0theta}), we have
\begin{align}\label{d=fra}
d=\dfrac{1}{2}\log \left(\dfrac{k+1}{k-1} \right).
\end{align}
From the fact that the distance between $i$ and $ki$ is $\log k$ 
and (\ref{d=fra}),
we can obtain $\delta(P,Q,R)=\dfrac{1}{2}\Delta_1'(P,Q)$.
Next, we show (2)$\rightarrow$(1).
By reversing the steps of the previous proof, we can conclude that $u=\dfrac{v}{\sqrt{k^2-1}}$ or $u=-\dfrac{v}{\sqrt{k^2-1}}$.
Setting $w=\Bar{f}^{-1}(-1/u)$, $w_1=\Bar{f}^{-1}(u)$, and $w_2=\Bar{f}^{-1}(k^2u)$, we have (1).   
\end{proof}

From Lemma \ref{pentagon-2}, we can state the following lemma, which provides a sufficient condition for 
$\rho(\psi_{\triangle PQR})=\dfrac{2}{5}$.

\begin{lem}\label{pentagon-2-1}
Let $\triangle PQR$ be a triangle in $D$ with $\delta(P,Q,R)=\dfrac{1}{2}\Delta_1'(P,Q)$.
Then, $\rho(\psi_{\triangle PQR})=\dfrac{2}{5}$ holds.
\end{lem}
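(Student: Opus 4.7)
The plan is to produce an explicit $5$-periodic orbit of $\psi_{\triangle PQR}$ on $S^1$ that realises condition~(3) of Lemma~\ref{pentagon-1}. By Lemma~\ref{pentagon-2}, the hypothesis $\delta(P,Q,R)=\tfrac12\Delta_1'(P,Q)$ furnishes a point $w\in S^1\setminus\{v_1,v_2\}$ and auxiliary boundary points $w_1$ on chord $wP$ and $w_2$ on chord $wQ$, such that $R=v_1w_2\cap v_2w_1$. Label the five boundary points $\{w,v_1,w_1,w_2,v_2\}$ counterclockwise along $S^1$ as $\alpha_0,\alpha_1,\alpha_2,\alpha_3,\alpha_4$; in the configuration depicted in Figure~\ref{fig:pentagon2}, where $w$ lies on the arc opposite to $R$ across line $PQ$, this cyclic order is $(\alpha_0,\ldots,\alpha_4)=(w,v_1,w_1,w_2,v_2)$.

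The key observation is that the five chords $\alpha_i\alpha_{i+2}$ (indices mod $5$) are the diagonals of this outer pentagon, and each passes through a vertex of $\triangle PQR$ (or along the side $PQ$): $\alpha_0\alpha_2=ww_1$ contains $P$, $\alpha_0\alpha_3=ww_2$ contains $Q$, $\alpha_1\alpha_3=v_1w_2$ contains $R$, $\alpha_2\alpha_4=w_1v_2$ contains $R$, and $\alpha_1\alpha_4=v_1v_2$ contains the side $PQ$. The five pairwise intersections of non-adjacent diagonals form the convex inner pentagon of the pentagram, and a direct incidence check gives that three of its vertices are precisely $P$, $Q$, $R$ (the remaining two are $S=\alpha_0\alpha_2\cap\alpha_1\alpha_3$ and $T=\alpha_0\alpha_3\cap\alpha_2\alpha_4$). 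Hence $\triangle PQR$ is a subtriangle of this convex inner pentagon, and since every outer diagonal extends a side of the inner pentagon, the inner pentagon—and so $\triangle PQR$—lies in one closed half-plane bounded by that diagonal; every outer diagonal is therefore a support line of $\triangle PQR$. Consequently the two tangent lines to $\triangle PQR$ from $\alpha_i$ are exactly the two outer diagonals through $\alpha_i$, with other endpoints at $\alpha_{i+2}$ and $\alpha_{i+3}=\alpha_{i-2}$ on $S^1$. Since $\alpha_{i+2}$ is counterclockwise-closer to $\alpha_i$ than $\alpha_{i+3}$ is, the definition of $\psi_{\triangle PQR}$ yields $\psi_{\triangle PQR}(\alpha_i)=\alpha_{i+2}$ for every $i$, exhibiting condition~(3) of Lemma~\ref{pentagon-1}, and therefore $\rho(\psi_{\triangle PQR})=\tfrac25$.

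The step I expect to be most delicate is the rigorous verification that $P,Q,R,S,T$ are in convex position, so that $\triangle PQR$ genuinely lies inside the inner pentagon and the support-line conclusion really does go through. The cleanest way to dispatch this should be to transfer everything to the upper half-plane via the isometry $\bar f$ used in the proof of Lemma~\ref{pentagon-2}; with $P=ki$, $Q=i$, $v_1=\infty$, $v_2=0$, and $w,w_1,w_2$ on the real axis, the convexity of the inner pentagon and the support claim for each diagonal reduce to an elementary coordinate calculation.
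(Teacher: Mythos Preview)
Your proposal is correct and follows the same approach as the paper: both invoke Lemma~\ref{pentagon-2} to produce the point $w$ (and the associated $w_1,w_2$) and then observe that the five boundary points $\{w,v_1,w_1,w_2,v_2\}$ form a $5$-periodic orbit of $\psi_{\triangle PQR}$, concluding via Lemma~\ref{pentagon-1}. The paper compresses this into a single line (``we see that $\psi_{\triangle PQR}^5(w)=w$''), whereas you unpack the verification that each pentagram diagonal is a genuine support line of $\triangle PQR$ by identifying $P,Q,R$ as three vertices of the convex inner pentagon---a clean justification that the paper leaves to the reader. Your flagged ``delicate step'' is in fact routine: five distinct points on $S^1$ always yield a convex inner pentagon (it is the intersection of the five half-planes $H_i$ bounded by $d_i=\alpha_i\alpha_{i+2}$ and not containing $\alpha_{i+1}$), so no coordinate computation in $\mathbb{H}$ is strictly needed, though it does no harm.
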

\begin{proof}
Applying Lemma \ref{pentagon-2} to $\triangle PQR$, we can find a point $w$ at infinity that satisfies (1) of the lemma. Using this fact, we see that $\psi_{\triangle PQR}^5(w)=w$, which implies that $\rho(\psi_{\triangle PQR})=\dfrac{2}{5}$.
\end{proof}

The next lemma states that the figure equidistant from the line in D in the hyperbolic sense is an ellipse.

\begin{lem}\label{pentagon-3}
Let $v_1=(0,1),v_2=(0,-1)\in {S^1}$. Let $k>0$.
Then, the set of $P\in D$, where the length of a vertical line from $P$ to the line $v_1v_2$ in the hyperbolic sense
is $k$ is given by
\begin{align*}
\{(x,y) \in D \mid \dfrac{(e^{2k}+1)^2}{(e^{2k}-1)^2}x^2+y^2=1\}.
\end{align*}
\end{lem}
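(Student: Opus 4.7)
The plan is to exploit the fact that $v_1v_2$ is exactly the segment of the $y$-axis lying in $D$, which makes the configuration symmetric under the Euclidean reflection $\sigma:(x,y)\mapsto(-x,y)$. Since $\sigma$ is an orthogonal transformation preserving $D$, it is an isometry of the Beltrami--Klein model. It fixes the line $v_1v_2$ pointwise and sends a point $P=(x,y)\in D$ to $(-x,y)$. The unique hyperbolic line through $P$ invariant under $\sigma$ is the horizontal chord $\{(t,y):t^2<1-y^2\}$, and being $\sigma$-invariant while meeting $v_1v_2$, it must be the hyperbolic perpendicular from $P$ to $v_1v_2$. Its foot on $v_1v_2$ is then $(0,y)$, so the quantity to compute is $d'\bigl(P,(0,y)\bigr)$.

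Next, I would apply the cross-ratio formula defining $d'$ along this horizontal chord. Its points at infinity are $(\pm\sqrt{1-y^2},y)$, so assuming $x\geq 0$ without loss of generality, a direct substitution gives
\[
d'\bigl(P,(0,y)\bigr)=\frac{1}{2}\log\frac{\sqrt{1-y^2}+x}{\sqrt{1-y^2}-x},
\]
since the $\sqrt{1-y^2}$ factors coming from the two segments ending at $(0,y)$ cancel. Setting this expression equal to $k$ and exponentiating yields $e^{2k}=\dfrac{\sqrt{1-y^2}+x}{\sqrt{1-y^2}-x}$, which rearranges to $x=\sqrt{1-y^2}\,\dfrac{e^{2k}-1}{e^{2k}+1}$. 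Squaring and moving $y^2$ to the left produces the ellipse equation of the statement, and the case $x<0$ is handled identically by $\sigma$-symmetry.

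I do not expect a substantive obstacle here. The only step requiring care is the identification of the hyperbolic perpendicular with the Euclidean-horizontal chord, which the symmetry argument settles cleanly (one could alternatively invoke pole--polar duality, noting that the pole of the $y$-axis is the direction at infinity perpendicular to it, so horizontal chords are exactly the hyperbolic perpendiculars to $v_1v_2$). Everything after this is routine cross-ratio arithmetic.
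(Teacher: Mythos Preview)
Your argument is correct and follows essentially the same route as the paper's proof: both identify the foot of the hyperbolic perpendicular from $P=(x,y)$ as $(0,y)$ and then compute $d'\bigl(P,(0,y)\bigr)$ via the cross-ratio along the horizontal chord, arriving at $\tfrac{1}{2}\bigl|\log\frac{\sqrt{1-y^2}+x}{\sqrt{1-y^2}-x}\bigr|=k$ and rearranging to the ellipse equation. The only difference is that the paper simply asserts the foot is $(0,y)$, whereas you supply the reflection-symmetry justification; the computation thereafter is identical.
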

\begin{proof}
The length of a vertical line from $P(x,y)\in D$ to the line $v_1v_2$ is the length of 
$PQ$, where $Q=(0,y)$.
Therefore, we have
\begin{align*}
\dfrac{1}{2}\left|\log\dfrac{\sqrt{1-y^2}+x}{\sqrt{1-y^2}-x}\right|=k,
\end{align*}
which is equivalent to $\dfrac{(e^{2k}+1)^2}{(e^{2k}-1)^2}x^2+y^2=1$.
\end{proof}

The following lemma explicitly gives the quintuple periodic points of $\psi_{\triangle PQR}$
when $\delta(P,Q,R)=\Delta_2'(P,Q)$.

\begin{lem}\label{pentagon-4}
Let $0<t<1$, $P=(0,t)$, and $Q=(0,-t)$.
Let $R=(u,v)\in D$ with $u\ne 0$, $|v|\leq t$ and $\delta(P,Q,R)=\Delta_2'(P,Q)$.
We set points $A_i$ $(1\leq i \leq 5)$ as follows:
\begin{align}
&A_1:=\left(\dfrac{\sqrt{1-v^2}(t^2-1)}{2vt-t^2-1},-\dfrac{vt^2+v-2t}{2vt-t^2-1}\right),\label{a1eq1}\\
&A_2:=\left(-\sqrt{1-v^2},v\right),\label{a1eq2}\\
&A_3:=\left(-\dfrac{\sqrt{1-v^2}(t^2-1)}{2vt+t^2+1},-\dfrac{vt^2+v+2t}{2vt+t^2+1}\right),\label{a1eq3}\\
&A_4:=\left(-\dfrac{\sqrt{1-v^2}(t^4-2t^2+1)}{4vt^3+t^4+4vt+6t^2+1},\dfrac{vt^4+6vt^2+4t^3+v+4t}{4vt^3+t^4+4vt+6t^2+1}\right),\label{a1eq4}\\
&A_5:=\left(\dfrac{\sqrt{1-v^2}(t^4-2t^2+1)}{4vt^3-t^4+4vt-6t^2-1},-\dfrac{vt^4+6vt^2-4t^3+v-4t}{4vt^3-t^4+4vt-6t^2-1}\right).\label{a1eq5}
\end{align}
Then, $A_i$ $(1\leq i \leq 5)$ are on ${S^1}$.
Then, if $u<0$, $\psi_{\triangle PQR}(A_i)=A_{i+1}$ for $1\leq i \leq 4$ and $\psi_{\triangle PQR}(A_5)=A_{1}$.  
if $u>0$, $\psi_{\triangle PQR}(A_{i+1})=A_{i}$ for $1\leq i \leq 4$ and $\psi_{\triangle PQR}(A_1)=A_{5}$.  

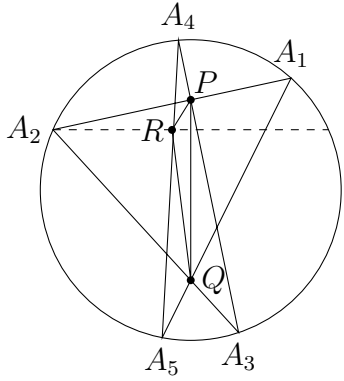
\begin{figure}
    \centering
\begin{tikzpicture}
\draw(0,0) circle (2);
\draw (1.334,1.489)--(-1.8318,0.8027)--(0.6365,-1.8959)--(-0.1631,1.9933)--(-0.379,-1.9637)--cycle;
\draw (0,1.2)--(0,-1.2)--(-0.2490,0.8)--cycle;

 \draw (1.334,1.489)node[above]{$A_1$};
 \draw (-1.8318,0.8027)node[left]{$A_2$};
 \draw (0.6365,-1.8959)node[below]{$A_3$};
\draw (-0.1631,1.9933)node[above]{$A_4$};
\draw (-0.379,-1.9637)node[below]{$A_5$};
\draw (-0.210,0.8)node[left]{$R$};
\draw (0.2,1.15)node[above]{$P$};
\draw (0,-1.2)node[right]{$Q$};
\coordinate (P) at (0,1.2);
\fill (P) circle [radius=1.5pt];
\coordinate (Q) at (0,-1.2);
\fill (Q) circle [radius=1.5pt];
\coordinate (R) at (-0.2490,0.8);
\fill (R) circle [radius=1.5pt];
\draw[dashed] (-1.8318,0.8027)--(1.8318,0.8027);

 \end{tikzpicture}
    \caption{Points $A_i$ $(1\leq i\leq 5)$. }
    \label{fig:pentagon3}
\end{figure}
\end{lem}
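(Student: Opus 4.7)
The plan is a direct computational verification organized in three steps: first show each $A_i \in S^1$; second, verify that each chord $A_iA_{i+1}$ of the pentagram passes through a specific vertex of $\triangle PQR$, so that these chords are exactly the tangent lines prescribed by Proposition~\ref{fund-2}; third, verify the cyclic order on $S^1$ so that the iteration matches $\psi_{\triangle PQR}$. Step~1 is a substitution check: for each of (\ref{a1eq1})--(\ref{a1eq5}) I expand $x_i^2 + y_i^2$ over the common denominator and show the numerator equals the square of the denominator. For $A_1$ this reduces to
\begin{equation*}
(1-v^2)(t^2-1)^2 + (vt^2+v-2t)^2 = (t^2+1-2vt)^2,
\end{equation*}
which follows from $(t^2+1)^2 - 4vt(t^2+1) + 4v^2 t^2 = ((t^2+1)-2vt)^2$; the identities for $A_3, A_4, A_5$ are of the same perfect-square type.

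For Step~2 the five incidences suggested by the configuration (cf. Figure~\ref{fig:pentagon3}) are
\begin{equation*}
P\in A_1A_2,\quad Q\in A_2A_3,\quad P\in A_3A_4,\quad R\in A_4A_5,\quad Q\in A_5A_1,
\end{equation*}
each a $2\times 2$ collinearity determinant in the coordinates already written down. The four incidences through $P$ or $Q$ depend only on $t$ and $v$ and reduce to polynomial identities derivable directly from the formulas. The one incidence $R\in A_4A_5$ is the unique place where the hypothesis $\delta(P,Q,R)=\Delta_2'(P,Q)$ is used: by Lemma~\ref{pentagon-3} applied with $k=\Delta_2'(P,Q)$, and using $e^{d'(P,Q)}=(1+t)/(1-t)$ so that $e^{2k}=(1+t^2)^2/(4t^2)$, the hypothesis becomes
\begin{equation*}
u^2 \;=\; \frac{(1-v^2)(1-t^2)^4}{(1+6t^2+t^4)^2},
\end{equation*}
and this is what is substituted into the collinearity determinant of $(A_4,A_5,R)$ to close the identity.

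For Step~3, one computes the angular positions of the $A_i$ on $S^1$ and checks that their counterclockwise order is $A_1, A_4, A_2, A_5, A_3$, so that skipping two elements at each step recovers the cycle $A_1\to A_2\to A_3\to A_4\to A_5\to A_1$. By Proposition~\ref{fund-2}(1), on each arc between two consecutive boundary intersection points of the edges of $\triangle PQR$ the map $\psi_{\triangle PQR}$ coincides with projection from a particular vertex; combining this with the incidences of Step~2 yields $\psi_{\triangle PQR}(A_i)=A_{i+1}$ when $u<0$. The case $u>0$ is obtained by reflecting the entire configuration across the line $PQ$, which preserves $t$ and $v$ but negates $u$ and reverses the induced orientation of $S^1$, giving the reversed cycle $\psi_{\triangle PQR}(A_{i+1}) = A_i$.

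The main obstacle is the identity $R\in A_4A_5$ in Step~2: the denominators in (\ref{a1eq4}) and (\ref{a1eq5}) are degree-four polynomials in $t$, so even after substituting the ellipse relation for $u^2$ the remaining polynomial identity is of substantial size. I would shorten the calculation by passing to the upper half-plane model as in the proof of Lemma~\ref{pentagon-2}, where the chord-extension through a fixed interior point becomes a M\"obius transformation with a particularly simple form, or equivalently by constructing $A_5$ directly as the chord extension of $A_4$ through $R$ and reducing the closure of the cycle to a single equation in $t,u,v$ whose solution set is exactly the ellipse of Lemma~\ref{pentagon-3}.
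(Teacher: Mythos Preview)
Your proposal is correct and follows essentially the same approach as the paper: verify that the $A_i$ lie on $S^1$, check the five incidences $P\in A_1A_2$, $Q\in A_2A_3$, $P\in A_3A_4$, $R\in A_4A_5$, $Q\in A_5A_1$ (the paper obtains the first four by constructing $A_1,A_3,A_4,A_5$ as successive chord--circle intersections starting from $A_2$, so only the closure condition $R\in A_4A_5$ needs separate work), and then use the arc positions together with Proposition~\ref{fund-2} to identify which vertex governs $\psi_{\triangle PQR}$ on each arc. The key step in both is exactly the one you single out: the incidence $R\in A_4A_5$ is reduced, via Lemma~\ref{pentagon-3} and $e^{d'(P,Q)}=(1+t)/(1-t)$, to the ellipse relation $\bigl((t^4+6t^2+1)/(1-t^2)^2\bigr)^2 u^2+v^2=1$, which the paper also derives and (like you) relegates to computer algebra.
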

\begin{proof}
We assume for simplicity that $u<0$ and $v\geq 0$ (see Figure \ref{fig:pentagon3}).
We see that $A_2$ in on ${S^1}$.
The open source software SageMath \cite{Sage} is used for the following calculations.	
By finding the intersection of the line $A_2P$ with ${S^1}$ we obtain $A_1$ and its coordinates (\ref{a1eq1}).
Since the second coordinates of $A_2$ and $R$ are the same and $|v|\leq t$, we have
$\psi_{\triangle PQR}(A_1)=A_{2}$.		
By finding the intersection of the line $A_2Q$ with ${S^1}$ we obtain $A_3$ and its coordinates (\ref{a1eq3}).
Again for the same reason, we have
$\psi_{\triangle PQR}(A_2)=A_{3}$.
By finding the intersection of the line $A_3P$ with ${S^1}$ we obtain $A_4$ and its coordinates (\ref{a1eq4}).
Since the first coordinates of $A_3$ is positive and the second coordinates of $A_3$ is less than $-t$, 
we have $\psi_{\triangle PQR}(A_3)=A_{4}$.
By finding the intersection of the line $A_1Q$ with ${S^1}$ we obtain $A_5$ and its coordinates (\ref{a1eq5}).
Next we show that $R=(u,v)$ is on the line segment  $A_4A_5$. 
Since $d'(P,Q)=\log\frac{1+t}{1-t}$, we have 
$\Delta'_2(P,Q)=\log\frac{e^{2d'(P,Q)}+1}{e^{2d'(P,Q)}-1}=\log \frac{t^2+1}{2t}$.
From $\delta(P,Q,R)=\Delta_2'(P,Q)$ and Lemma \ref{pentagon-3}, we see that
$(u,v)$ satisfies that
\begin{align}\label{t^4+6t^2}
\left(\dfrac{t^4+6t^2+1}{t^4-2t^2+1}\right)^2u^2+v^2=1
\end{align}
From the formulae  (\ref{a1eq4}), (\ref{a1eq5}) and (\ref{t^4+6t^2}),  we see that $R$ in on the line $A_4A_5$ and 
by considering the second coordinates of $R, A_4$ and $A_5$ are negative,
which implies that $\psi_{\triangle PQR}(A_4)=A_{5}$.
From the facts that $|v|\leq t$ and the second coordinates of $A_{5}$ is less than $-t$, we have
$\psi_{\triangle PQR}(A_5)=A_{1}$. 
We can prove the theorem in a similar manner for the case where $u>0$.
\end{proof}

From Lemma \ref{pentagon-4}, we can state the following lemma, which provides a sufficient condition for 
$\rho(\psi_{\triangle PQR})=\dfrac{2}{5}$.

\begin{lem}\label{pentagon-5}
Let $\triangle PQR$ be a triangle in $D$.
Let $S$ be a point on the line $PQ$ such that line $RS$ is perpendicular to the line $PQ$ in the hyperbolic sense.
We suppose that $\delta(P,Q,R)=\Delta_2'(P,Q)$ and $S$ is in the line segment $PQ$.
Then, $\rho(\psi_{\triangle PQR})=\dfrac{2}{5}$ holds.
\end{lem}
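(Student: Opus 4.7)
The plan is to reduce to the normalized setting of Lemma \ref{pentagon-4} via a hyperbolic isometry, then apply that lemma together with Lemma \ref{pentagon-1}.

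First, I would use the fact that the isometry group of the Beltrami-Klein model acts transitively on ordered pairs of points at a prescribed hyperbolic distance. Pick an orientation-preserving isometry $\varphi\colon D\to D$ with $\varphi(P)=(0,t)$ and $\varphi(Q)=(0,-t)$, where $t=\tanh(d'(P,Q)/2)\in(0,1)$. Such isometries are projective transformations preserving $S^1$, so they send convex sets to convex sets, tangent lines to tangent lines, and orient $S^1$ the same way. Consequently $\varphi\circ\psi_{\triangle PQR}=\psi_{\triangle \varphi(P)\varphi(Q)\varphi(R)}\circ\varphi$ on $S^1$, and since the rotation number is a conjugacy invariant it suffices to prove the claim after this normalization. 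Write $\varphi(R)=(u,v)$; here $u\neq 0$ because $\triangle PQR$ is non-degenerate.

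Second, I would check the hypotheses of Lemma \ref{pentagon-4}. In the Beltrami-Klein model, Euclidean lines through the origin are also hyperbolic geodesics, and Euclidean reflection across the $y$-axis is a hyperbolic isometry; therefore the hyperbolic perpendicular from any interior point to the $y$-axis coincides with the Euclidean perpendicular. Hence the foot of the perpendicular from $(u,v)$ onto the line $\varphi(P)\varphi(Q)$ is $(0,v)$. The hypothesis that $S$ lies in the segment $PQ$ therefore translates to $|v|\leq t$, and the equality $\delta(P,Q,R)=\Delta_2'(P,Q)$ transfers to $\delta(\varphi(P),\varphi(Q),\varphi(R))=\Delta_2'(\varphi(P),\varphi(Q))$ since $\varphi$ is an isometry. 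All hypotheses of Lemma \ref{pentagon-4} are met, so that lemma furnishes five points $A_1,\ldots,A_5\in S^1$ that are cyclically permuted by $\psi_{\triangle \varphi(P)\varphi(Q)\varphi(R)}$.

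Third, $A_1$ is therefore a $5$-periodic point of $\psi_{\triangle \varphi(P)\varphi(Q)\varphi(R)}$. By the equivalence (1)$\Leftrightarrow$(2) in Lemma \ref{pentagon-1} — whose proof combines Theorem \ref{t5} with the bound $\rho\leq 1/2$ from Lemma \ref{fund-4}(3) to rule out the other fractions $1/5$, $3/5$, $4/5$ — this forces $\rho(\psi_{\triangle \varphi(P)\varphi(Q)\varphi(R)})=2/5$, and hence $\rho(\psi_{\triangle PQR})=2/5$.

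The main obstacle is not computational — Lemma \ref{pentagon-4} has already absorbed that work — but rather the translation step: one must carefully justify that an isometry of the Beltrami-Klein model conjugates the bar-billiard map for one triangle to the map for its image, and that the geometric condition that $S$ lies in the segment $PQ$ corresponds exactly to the coordinate condition $|v|\leq t$ required in Lemma \ref{pentagon-4}. Once those identifications are in place the conclusion is immediate.
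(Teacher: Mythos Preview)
Your proof is correct and follows essentially the same approach as the paper: normalize via a hyperbolic isometry sending $P,Q$ to $(0,\pm t)$ with $t=\tanh(d'(P,Q)/2)=(e^{d'(P,Q)}-1)/(e^{d'(P,Q)}+1)$, verify that the image of $R$ satisfies the hypotheses of Lemma~\ref{pentagon-4}, and use the resulting $5$-periodic orbit together with Lemma~\ref{pentagon-1} to conclude $\rho=2/5$. Your write-up is in fact slightly more explicit than the paper's about why the isometry conjugates the bar-billiard maps and why the foot of the perpendicular lands at $(0,v)$, but the argument is the same.
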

\begin{proof}
Let $t=(e^{d'(P,Q)}-1)/(e^{d'(P,Q)}+1)$, $P'=(0,t)$, and $Q'=(0,-t)$.
Then, since $d'(P,Q)=d'(P',Q')$ holds, 
there exist an isometry $\phi$ on $D$
such that $\phi(P)=P'$ and $\phi(Q)=Q'$. Let $R'=\phi(R)$ and $S'=\phi(S)$.
Let $\phi'$ be an extension of $\phi$ to $D\cup {S^1}$. 
We note that $\delta(P,Q,R)=\delta(P',Q',R')$ and 
$S'$ is in the line segment $P'Q'$.
Therefore, from Lemma \ref{pentagon-4} 
there exists a periodic point $A_1'$ on ${S^1}$ with the period $5$ by the map $\psi_{\triangle P'Q'R'}$.
Since $\phi'^{-1}(A_1')$ is  a periodic point by the map $\psi_{\triangle PQR}$, 
we see $\rho(\psi_{\triangle PQR})=\dfrac{2}{5}$. 
\end{proof}

Combining Lemma \ref{pentagon-2-1} and Lemma \ref{pentagon-5}, we obtain the following result.

\begin{lem}\label{pentagon-6}
Let $\triangle PQR$ be a triangle in $D$.
Let $S$ be a point on the line $PQ$ such that line $RS$ is perpendicular to the line $PQ$ in the hyperbolic sense.
We suppose that $\Delta_2'(P,Q)\leq \delta(P,Q,R)\leq \frac{1}{2}\Delta_1'(P,Q)$ and $S$ is in the line segment $PQ$.
Then, $\rho(\psi_{\triangle PQR})=\frac{2}{5}$ holds.
\end{lem}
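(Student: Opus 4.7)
The strategy is a sandwich argument using the monotonicity of the rotation number under inclusion of convex sets (Lemma \ref{fund-4}(2)): we already know that $\rho = 2/5$ at the two endpoint values $\delta = \Delta_2'(P,Q)$ (by Lemma \ref{pentagon-5}) and $\delta = \tfrac{1}{2}\Delta_1'(P,Q)$ (by Lemma \ref{pentagon-2-1}), and we squeeze the intermediate case in between.

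To set this up, I would fix $P$, $Q$ and the foot $S$ of the perpendicular on segment $PQ$, and parameterize candidate third vertices by the position of $R$ on the hyperbolic perpendicular to $PQ$ at $S$. In the Beltrami--Klein model this perpendicular is a Euclidean chord of $D$ through $S$, and the hyperbolic distance $\delta(P,Q,R) = d'(R,S)$ is a continuous monotone function of the Euclidean position of $R$ along this chord on the chosen side of $PQ$, taking every value in $[0,\infty)$ as $R$ runs from $S$ to the boundary.

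The key observation is that along this ray the triangles $\triangle PQR$ are nested. If $R$ lies on the Euclidean segment $SR'$ with $R'$ farther from $S$, then since $S$ is on segment $PQ$ we can write $S = sP+(1-s)Q$ and $R = tR' + (1-t)S$ with $s,t \in [0,1]$, so $R$ is a convex combination of $P$, $Q$, $R'$; hence $R \in \triangle PQR'$ and therefore $\triangle PQR \subset \triangle PQR'$. Lemma \ref{fund-4}(2) then gives $\rho(\psi_{\triangle PQR'}) \leq \rho(\psi_{\triangle PQR})$, so $\rho(\psi_{\triangle PQR})$ is non-increasing as $\delta(P,Q,R)$ grows along the ray.

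Now choose $R_1$ and $R_2$ on the ray (on the same side of $PQ$ as the given $R$) with $\delta(P,Q,R_1) = \Delta_2'(P,Q)$ and $\delta(P,Q,R_2) = \tfrac{1}{2}\Delta_1'(P,Q)$, which exist by the continuity and surjectivity of $\delta$. Lemmas \ref{pentagon-5} and \ref{pentagon-2-1} give $\rho(\psi_{\triangle PQR_i}) = 2/5$ for $i=1,2$, and any $R$ satisfying the hypothesis sits between $R_1$ and $R_2$ on this ray, so monotonicity forces
\[
\tfrac{2}{5} = \rho(\psi_{\triangle PQR_2}) \leq \rho(\psi_{\triangle PQR}) \leq \rho(\psi_{\triangle PQR_1}) = \tfrac{2}{5}.
\]
The only step requiring any care is the nestedness of the triangles, which follows cleanly from the two facts that hyperbolic lines in $D$ are Euclidean chords and that $S$ lies on segment $PQ$; everything else is bookkeeping with the two endpoint lemmas.
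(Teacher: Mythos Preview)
Your proof is correct and follows essentially the same route as the paper: pick $R_1$, $R_2$ on the perpendicular ray through $S$ at the two extremal distances, invoke Lemmas \ref{pentagon-2-1} and \ref{pentagon-5} to get $\rho=2/5$ at the endpoints, use the nesting $\triangle PQR_1 \subset \triangle PQR \subset \triangle PQR_2$ together with Lemma \ref{fund-4}(2), and squeeze. Your convex-combination justification of the nesting (using that $S$ lies on segment $PQ$ and that Klein-model lines are Euclidean chords) is a welcome bit of explicitness that the paper leaves implicit.
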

\begin{proof}
We take the point $R_1$ in the line $RS$ with $d'(R_1,S)=\frac{1}{2}\Delta_1'(P,Q)$ and $d'(S,R)\leq d'(S,R_1)$.
We also take the point $R_2$ in the line $RS$ with $d'(R_2,S)=\Delta_2'(P,Q)$ and $d'(S,R)\geq d'(S,R_2)$.
Since $\Delta_2'(P,Q)\leq \delta(P,Q,R)\leq \frac{1}{2}\Delta_1'(P,Q)$, 
we see $\triangle PQR_2\subset \triangle PQR\subset \triangle PQR_1$, which implies 
\begin{align*}
\rho(\psi_{\triangle PQR_2})\geq \rho(\psi_{\triangle PQR})\geq\rho(\psi_{\triangle PQR_1}).
\end{align*}
From Lemma \ref{pentagon-2-1} and \ref{pentagon-5}, we see $\rho(\psi_{\triangle PQR_i})=\frac{2}{5}$ for $i=1,2$.
Therefore, we have $\rho(\psi_{\triangle PQR})=\frac{2}{5}$.
\end{proof}

Lemma \ref{pentagon-5} can be extended to the following lemma.

\begin{lem}\label{pentagon-7}
Let $\triangle PQR$ be a triangle in $D$ with $\delta(P,Q,R)=\Delta_2'(P,Q)$.
Then, $\rho(\psi_{\triangle PQR})=\dfrac{2}{5}$ holds.
\end{lem}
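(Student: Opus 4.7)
The plan is to extend Lemma \ref{pentagon-5} by removing its hypothesis that the foot $S$ of the hyperbolic perpendicular from $R$ to line $PQ$ lies in segment $PQ$. As in the proof of Lemma \ref{pentagon-5}, I first apply an isometry so that $P=(0,t)$ and $Q=(0,-t)$ for some $0<t<1$; then $S=(0,v)$ where $R=(u,v)$, and the condition ``$S\in$ segment $PQ$'' reads $|v|\leq t$. The case $|v|\leq t$ is exactly Lemma \ref{pentagon-5}, and, applying the orientation-reversing but $\rho$-preserving reflection $(x,y)\mapsto(-x,y)$ if necessary, it suffices to treat $u<0$ and $v>t$; the case $v<-t$ is analogous.

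By Lemma \ref{pentagon-3}, the locus $E=\{(u',v')\in D:\delta(P,Q,(u',v'))=\Delta_2'(P,Q)\}$ is an ellipse; from equation (\ref{t^4+6t^2}) in the proof of Lemma \ref{pentagon-4}, it is $\bigl((t^4+6t^2+1)/(t^4-2t^2+1)\bigr)^2 u'^2+v'^2=1$. Its left-half component $E^-=E\cap\{u'<0\}$ is a connected arc joining $(0,-1)$ and $(0,1)$ in $\overline{D}$, and contains both $R$ and $R^*=\bigl(-(t^4-2t^2+1)/(t^4+6t^2+1),\,0\bigr)$, at which $|v^*|=0\leq t$. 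I take a continuous path $R(\lambda)$, $\lambda\in[0,1]$, inside $E^-\subset D$ from $R(0)=R^*$ to $R(1)=R$. By Lemma \ref{fund-3}(3), the function $\lambda\mapsto\rho(\psi_{\triangle PQR(\lambda)})$ is continuous on $[0,1]$, and by Lemma \ref{pentagon-5} it equals $\tfrac{2}{5}$ at $\lambda=0$.

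The main obstacle is to show that this rotation number remains $\tfrac{2}{5}$ along the entire path. My strategy is to upgrade Lemma \ref{pentagon-4} by exhibiting an explicit 5-periodic point of $\psi_{\triangle PQR(\lambda)}$ for every $\lambda$, from which Lemma \ref{pentagon-1} concludes. The five points $A_1(\lambda),\ldots,A_5(\lambda)$ defined by the rational formulas (\ref{a1eq1})--(\ref{a1eq5}) evaluated at $R(\lambda)=(u(\lambda),v(\lambda))$ depend continuously on $\lambda$, and the verification that they lie on $S^1$ reduces to the polynomial identity recorded in (\ref{t^4+6t^2}), which does not use $|v|\leq t$. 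What did require $|v|\leq t$ in Lemma \ref{pentagon-4} was the tangent-vertex bookkeeping identifying $\psi_{\triangle PQR}(A_i)$ with $A_{i+1}$. When $v(\lambda)$ crosses the threshold $t$, the CCW tangent from some $A_i(\lambda)$ that previously passed through $P$ may now pass through $R(\lambda)$ instead, since $R(\lambda)$ has risen above $P$. I would partition $[0,1]$ into finitely many open sub-intervals on which the pattern of tangent incidences is constant, and on each check directly that the chain $A_1\to A_2\to\cdots\to A_5\to A_1$ remains an orbit of $\psi_{\triangle PQR(\lambda)}$ (possibly with a cyclic re-indexing of the $A_i$'s); each such check reduces to verifying polynomial inequalities in $\lambda$ locating the tangent incidences. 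Once any 5-periodic point of $\psi_{\triangle PQR(\lambda)}$ is exhibited, Lemma \ref{pentagon-1} together with Theorem \ref{t5} (which excludes $\rho\leq\tfrac{1}{3}$) forces $\rho(\psi_{\triangle PQR(\lambda)})=\tfrac{2}{5}$, completing the proof at $\lambda=1$.
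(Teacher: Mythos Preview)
Your proposal has a genuine gap at the point you yourself flag as ``the main obstacle.'' The five points $A_1,\ldots,A_5$ in (\ref{a1eq1})--(\ref{a1eq5}) are constructed so that the chords $A_1A_2$, $A_2A_3$, $A_3A_4$, $A_5A_1$ pass through $P$, $Q$, $P$, $Q$ respectively, and only $A_4A_5$ passes through $R$. Once $v>t$, the vertex $R$ lies above $P$, so the counterclockwise tangent from (say) $A_1$ to $\triangle PQR$ now meets the triangle at $R$ rather than $P$; the line $A_1R$ does not meet $S^1$ at $A_2$, so $\psi_{\triangle PQR}(A_1)\neq A_2$. A cyclic re-indexing cannot repair this, because the incidence pattern $(P,Q,P,R,Q)$ of the five chords is not a cyclic shift of any pattern in which two chords pass through $R$. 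Thus the sentence ``check directly that the chain $A_1\to\cdots\to A_5\to A_1$ remains an orbit'' is not a deferred routine verification but the entire content of the lemma in the regime $v>t$, and it is not true for the points you wrote down. Nor does continuity of $\rho$ along your path help: continuity tells you $\rho$ stays near $2/5$, not that it stays equal to $2/5$.

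The paper avoids this computation altogether by a change of viewpoint. When $S\notin\overline{PQ}$ (say $S$ closer to $P$), it switches to the side $RQ$ as base: setting $h=\delta(R,Q,P)$, it proves via the hyperbolic right-triangle relations $\sinh d'(Q,R)\sin\theta=\sinh d'(S,R)$ and $\sinh d'(Q,P)\sin\theta=\sinh h$ (with $\theta=\angle RQS$) the two-sided estimate
\[
\Delta_2'(R,Q)\ \leq\ h\ \leq\ \tfrac{1}{2}\Delta_1'(R,Q).
\]
Since the foot of the perpendicular from $P$ to line $RQ$ does lie on segment $RQ$, Lemma~\ref{pentagon-6} then applies with $(R,Q,P)$ in place of $(P,Q,R)$ and gives $\rho=\tfrac{2}{5}$ directly. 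This reduces the new case to the case already handled, with no need to track periodic points through a bifurcation of tangent incidences.
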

\begin{proof}
Let $S$ be a point on the line $PQ$ such that line $RS$ is perpendicular to the line $PQ$ in the hyperbolic sense.
If $S$ is in the line segment $PQ$, from Lemma \ref{pentagon-5}, we see that $\rho(\psi_{\triangle PQR})=\dfrac{2}{5}$.
We suppose that $S$ is not in the line segment $PQ$.
 We also assume that $S$ is closer to $P$ than $Q$ without loss of generality (see Figure \ref{fig:pentagon5}).
We set $h=\delta(R,Q,P)$ and $\theta=\angle RQS$.
We will show
\begin{align}\label{Delta2RQ}
\Delta_2'(R,Q)\leq h\leq \dfrac{1}{2}\Delta_1'(R,Q).
\end{align}
Through the law of right-angled  triangles \cite{B}, 
we have
\begin{align*}
&\sinh d'(Q,R)\sin \theta=\sinh d'(S,R),\\
&\sinh d'(Q,P)\sin \theta=\sinh h,
\end{align*}
which implies 
\begin{align}\label{sinhh}
\sinh h=\dfrac{\sinh d'(S,R)\sinh d'(Q,P)}{\sinh d'(Q,R)}.
\end{align}
Now, we see
\begin{align}\label{sinhDelta2}
\sinh \Delta_2'(R,Q)=\dfrac{1}{2}\left(\dfrac{e^{2 d'(R,Q)}+1}{e^{2 d'(R,Q)}-1}-\dfrac{e^{2 d'(R,Q)}-1}{e^{2 d'(R,Q)}+1}\right)=\dfrac{2e^{2 d'(R,Q)}}{e^{4 d'(R,Q)}-1}.
\end{align}
Since $h=\Delta_2'(P,Q)$ holds, we have 
\begin{align}\label{sinhhdRQ}
\sinh h=\dfrac{2e^{2 d'(P,Q)}}{e^{4 d'(P,Q)}-1}.
\end{align}

From (\ref{sinhh}),  (\ref{sinhDelta2}) and (\ref{sinhhdRQ}), the inequality $\sinh\Delta_2'(R,Q)\leq \sinh h$ is 
equivalent to 
\begin{align*}
\dfrac{2e^{2 d'(R,Q)}\sinh d'(Q,R)}{e^{4 d'(R,Q)}-1}\leq \dfrac{2e^{ d'(P,Q)}\sinh d'(Q,P)}{e^{4 d'(P,Q)}-1},	
\end{align*}
which is 
equivalent to 
\begin{align*}
\cosh d'(P,Q)\leq \cosh d'(R,Q),
\end{align*}
which is apparent.
Therefore, we have $\Delta_2'(R,Q)\leq h$.
From the fact that $\sinh \Delta_1'(R,Q)=\dfrac{2e^{ d'(R,Q)}}{e^{2 d'(R,Q)}-1}$, 
the inequality $h\leq \dfrac{1}{2}\Delta_1'(R,Q)$ 
is equivalent to 
\begin{align}\label{2sinhh}
2\sinh h\cosh h\leq \dfrac{2e^{ d'(R,Q)}}{e^{2 d'(R,Q)}-1}.
\end{align}
Applying the equations  (\ref{sinhh}) and (\ref{sinhhdRQ}) to the inequality (\ref{2sinhh}),
the inequality (\ref{2sinhh}) is equivalent to
\begin{align*}
\dfrac{2\sinh d'(P,Q)e^{2d'(P,Q)}\cosh h}{e^{4d'(P,Q)}-1}\leq \dfrac{e^{d'(R,Q)}\sinh d'(R,Q)}{e^{2d'(R,Q)}-1},
\end{align*}
which reduces to
\begin{align*}
\cosh h\leq \dfrac{1}{2}\dfrac{e^{2d'(P,Q)}+1}{e^{d'(P,Q)}},
\end{align*}
which is 
equivalent to 
\begin{align}\label{sinhhsinhPQ}
\sinh h\leq \sinh d'(P,Q).
\end{align}
Considering (\ref{sinhh}), the inequality (\ref{sinhhsinhPQ}) is equivalent to
\begin{align*}
\sinh d'(S,R)\leq \sinh d'(Q,R),
\end{align*}
which holds by the law of right-angled  triangles.
Therefore, the inequality (\ref{Delta2RQ}) holds.
Considering $\sinh d'(Q,R)\leq \sinh d'(Q,P)$, from Lemma \ref{pentagon-6} we have $\rho(\psi_{\triangle PQR})=\dfrac{2}{5}$.
\end{proof}

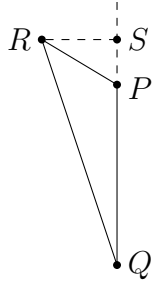
\begin{figure}
    \centering
\begin{tikzpicture}
\draw (0,1.2)--(0,-1.2)--(-1,1.8)--cycle;

\draw (-1,1.8)node[left]{$R$};
\draw (0.0,1.15)node[right]{$P$};
\draw (0,-1.2)node[right]{$Q$};
\draw (0,1.8)node[right]{$S$};

\coordinate (P) at (0,1.2);
\fill (P) circle [radius=1.5pt];
\coordinate (Q) at (0,-1.2);
\fill (Q) circle [radius=1.5pt];
\coordinate (R) at (-1,1.8);
\fill (R) circle [radius=1.5pt];
\coordinate (S) at (0,1.8);
\fill (S) circle [radius=1.5pt];

\draw[dashed] (0,2.3)--(0,1.2);
\draw[dashed] (0,1.8)--(-1,1.8);

 \end{tikzpicture}
    \caption{$\triangle PQR$ and $S$ in Lemma \ref{pentagon-7} }
    \label{fig:pentagon5}
\end{figure}

The following theorem is one of the main results of the present paper.

\begin{thm}\label{pentagon-8}
Let $\triangle PQR$ be a triangle in $D$ such that
$\Delta_2'(P,Q)\leq \delta(P,Q,R)\leq \frac{1}{2}\Delta_1'(P,Q)$
or $\frac{1}{2}\Delta_1'(P,Q)\leq \delta(P,Q,R)\leq \Delta_2'(P,Q)$.
Then, $\rho(\psi_{\triangle PQR})=\frac{2}{5}$ holds.
\end{thm}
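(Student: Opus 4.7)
My plan is to reduce Theorem \ref{pentagon-8} to Lemma \ref{pentagon-6} through two extensions: first, allowing either ordering of the two thresholds $\Delta_2'(P,Q)$ and $\tfrac{1}{2}\Delta_1'(P,Q)$, and second, removing the requirement that the foot $S$ of the hyperbolic perpendicular from $R$ to line $PQ$ lies on segment $PQ$. Setting $m := \min\{\Delta_2'(P,Q), \tfrac{1}{2}\Delta_1'(P,Q)\}$ and $M := \max\{\Delta_2'(P,Q), \tfrac{1}{2}\Delta_1'(P,Q)\}$, the hypothesis in both cases becomes $m \leq \delta(P,Q,R) \leq M$.

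When $S$ lies on segment $PQ$, I would follow the squeeze argument of Lemma \ref{pentagon-6}: place $R_m$ and $R_M$ on the hyperbolic perpendicular through $R$ (on the same side of line $PQ$ as $R$) at hyperbolic distances $m$ and $M$ from $S$ respectively. Lemmas \ref{pentagon-2-1} and \ref{pentagon-7} yield $\rho(\psi_{\triangle PQR_m}) = \rho(\psi_{\triangle PQR_M}) = \tfrac{2}{5}$, independently of which threshold is larger. The assumption that $S$ lies on segment $PQ$ yields the nesting $\triangle PQR_m \subset \triangle PQR \subset \triangle PQR_M$, and Lemma \ref{fund-4}(2) then squeezes $\rho(\psi_{\triangle PQR}) = \tfrac{2}{5}$.

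When $S$ does not lie on segment $PQ$, this nesting fails and I would change the base as in the proof of Lemma \ref{pentagon-7}. Supposing without loss of generality that $S$ lies beyond $P$, set $h := \delta(R,Q,P)$ and use the hyperbolic right-triangle identity
\begin{equation*}
\sinh h \;=\; \frac{\sinh\delta(P,Q,R)\,\sinh d'(Q,P)}{\sinh d'(Q,R)},
\end{equation*}
which makes $h$ a monotone increasing function of $\delta(P,Q,R)$. Lemma \ref{pentagon-7} already shows that at the endpoint $\delta(P,Q,R) = \Delta_2'(P,Q)$ one has $\Delta_2'(R,Q) \leq h \leq \tfrac{1}{2}\Delta_1'(R,Q)$; an analogous computation at the other endpoint $\delta(P,Q,R) = \tfrac{1}{2}\Delta_1'(P,Q)$, combined with the monotonicity above, should place $h$ in the interval between $\Delta_2'(R,Q)$ and $\tfrac{1}{2}\Delta_1'(R,Q)$ for every $\delta(P,Q,R) \in [m,M]$. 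Since the angle $\angle RPQ$ is obtuse (because $S$ lies beyond $P$), the foot of the hyperbolic perpendicular from $P$ to line $RQ$ lies inside segment $RQ$; hence the previous case applied to $\triangle RQP$ with base $RQ$ finishes the proof. The main obstacle is the new trigonometric computation at $\delta(P,Q,R) = \tfrac{1}{2}\Delta_1'(P,Q)$: it parallels but does not merely copy Lemma \ref{pentagon-7}'s manipulations, and the direction of each implication must be tracked carefully so that the transferred bound on $h$ lands in the targeted interval.
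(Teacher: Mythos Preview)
The paper's own argument is much shorter than yours and makes no case distinction on the position of $S$. It simply reuses the construction from Lemma~\ref{pentagon-6}: take $R_1$ and $R_2$ on the line $RS$ at hyperbolic heights $\tfrac12\Delta_1'(P,Q)$ and $\Delta_2'(P,Q)$, assert that one of the two chains
\[
\rho(\psi_{\triangle PQR_2})\ge\rho(\psi_{\triangle PQR})\ge\rho(\psi_{\triangle PQR_1})
\quad\text{or}\quad
\rho(\psi_{\triangle PQR_2})\le\rho(\psi_{\triangle PQR})\le\rho(\psi_{\triangle PQR_1})
\]
holds, and conclude by applying Lemma~\ref{pentagon-2-1} and Lemma~\ref{pentagon-7} (which, unlike Lemma~\ref{pentagon-5}, carries no hypothesis on $S$) to the two endpoint triangles. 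Your treatment of the case $S\in PQ$ is exactly this squeeze.

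Your separate treatment of $S\notin PQ$ via a change of base goes beyond what the paper writes, and it contains a gap you have not flagged. In the identity
\[
\sinh h=\frac{\sinh\delta(P,Q,R)\,\sinh d'(Q,P)}{\sinh d'(Q,R)}
\]
the denominator is not constant: when you slide $R$ along the perpendicular through $S$ to interpolate between the two threshold heights, $d'(Q,R)$ moves too, so ``$h$ is monotone in $\delta$'' is not immediate from this formula and needs its own check (via the hyperbolic Pythagorean relation $\cosh d'(Q,R)=\cosh d'(Q,S)\cosh\delta$). More importantly, the interval you want $h$ to land in, namely between $\Delta_2'(R,Q)$ and $\tfrac12\Delta_1'(R,Q)$, \emph{also} moves with $R$, since both of these quantities are decreasing functions of $d'(R,Q)$. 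Knowing that $h$ meets its (moving) target at the two extreme positions $\delta=\Delta_2'(P,Q)$ and $\delta=\tfrac12\Delta_1'(P,Q)$, plus monotonicity of $h$ alone, does not force containment for intermediate $\delta$; you must combine the monotonicities so that $h-\Delta_2'(R,Q)$ and $\tfrac12\Delta_1'(R,Q)-h$ are each one--signed on the whole range. That bookkeeping, together with the unverified endpoint computation at $\delta=\tfrac12\Delta_1'(P,Q)$ that you already flag, is what your outline is actually missing.
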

\begin{proof}
We take the points $R_1$ and $R_2$ as in the proof of Lemma \ref{pentagon-6}.
Then, we have 
\begin{align*}
&\rho(\psi_{\triangle PQR_2})\geq \rho(\psi_{\triangle PQR})\geq\rho(\psi_{\triangle PQR_1}),\\
&\text{or}\\
&\rho(\psi_{\triangle PQR_2})\leq \rho(\psi_{\triangle PQR})\leq\rho(\psi_{\triangle PQR_1}).
\end{align*}
By Lemmas \ref{pentagon-2-1} and  \ref{pentagon-7} we have $\rho(\psi_{\triangle PQR})=\frac{2}{5}$.

\end{proof}

\section{Additional results}

The following lemma is an extension of Theorem 3.1(\cite{NY}).
 
\begin{lem}\label{further-1}
Let $P_1, P_2$ be two distinct points in $D$.
Let $v_1, v_2$ be intersections of the line $P_1P_2$ and $S^1$ such that  $v_1$ is closer to $P_1$ than $P_2$.
Let $n\in {\mathbb{Z}_{>0}}$.
For $P\in D$ such that $P$ is not on the line $P_1P_2$,
we define  $\tau_n(P)$ as
\begin{align*}
\sharp \{w\in S^1 \mid w\ne v_i(i=1,2), P\in\ \text{the line }w\psi_{P_1P_2}^{2n}(w)\}.  
\end{align*}
Then, 
\begin{align*}
\tau_n(P)=
\begin{cases}
0&\text{ if }\delta(P_1, P_2,P)<\Delta_{n}'(P_1, P_2),\\
1&\text{ if }\delta(P_1, P_2,P)=\Delta_{n}'(P_1, P_2),\\
2&\text{ if }\delta(P_1, P_2,P)>\Delta_{n}'(P_1, P_2).
\end{cases}
\end{align*}
\end{lem}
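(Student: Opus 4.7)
My plan is to exploit the isometric invariance of everything in sight---$\delta$, $\Delta_n'$, $\psi_{P_1P_2}$, and hence the chord-count $\tau_n(P)$---and reduce to an explicit computation in the upper half-plane $\mathbb{H}$. Following the setup used in the proof of Lemma \ref{pentagon-2}, I choose an orientation-preserving isometry $\Phi : D \to \mathbb{H}$ whose boundary extension sends $v_2 \mapsto 0$, $v_1 \mapsto \infty$, $P_2 \mapsto i$, and $P_1 \mapsto ki$ with $k = e^{d'(P_1, P_2)} > 1$. Under $\Phi$ the line $P_1 P_2$ becomes the imaginary axis, the chords of $S^1$ become vertical Euclidean lines or semicircles perpendicular to $\mathbb{R}$, and the counterclockwise orientation of $S^1$ is identified with the cyclic order $-\infty \to 0 \to +\infty \to \infty \to -\infty$ on $\mathbb{R} \cup \{\infty\}$.

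Next I would compute $\psi_{P_1P_2}^{2n}$ explicitly in these coordinates. The hyperbolic line through $w \in \mathbb{R} \setminus \{0\}$ and $P_j$ meets $\mathbb{R}$ again at $\sigma_1(w) := -k^2/w$ (for $j = 1$) and $\sigma_2(w) := -1/w$ (for $j = 2$). A short case analysis on the cyclic order identifies the counterclockwise-closer tangent: for $w > 0$ both $\sigma_j(w)$ are negative and are reached counterclockwise (through $\infty$) in the order $\sigma_1(w), \sigma_2(w)$, so $\psi_{P_1P_2}(w) = \sigma_1(w)$; symmetrically $\psi_{P_1P_2}(w) = \sigma_2(w)$ for $w < 0$. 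Since $\sigma_2 \circ \sigma_1(w) = w/k^2$ and $\sigma_1 \circ \sigma_2(w) = k^2 w$, iterating yields the clean formula
\[
\psi_{P_1P_2}^{2n}(w) = w/k^{2n} \quad (w > 0), \qquad \psi_{P_1P_2}^{2n}(w) = k^{2n}\, w \quad (w < 0).
\]

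Now I would fix $P = (a, b) \in \mathbb{H}$ with $a \neq 0$ and, by the left-right symmetry of the setup, assume $a > 0$; any semicircle from $w < 0$ to $k^{2n} w$ lies entirely in $\{x < 0\}$ and so cannot meet $P$. The condition that $P$ lies on the Euclidean semicircle joining $w > 0$ to $w/k^{2n}$ on $\mathbb{R}$ reduces, after expanding the equation of the corresponding circle, to the quadratic
\[
w^2 - a\,(k^{2n} + 1)\, w + (a^2 + b^2)\, k^{2n} = 0.
\]
Both its sum and its product of roots are positive, so any real roots are automatically positive and non-zero, giving admissible $w$'s in $S^1 \setminus \{v_1, v_2\}$. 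Hence $\tau_n(P)$ equals the number of distinct real roots, controlled by the sign of the discriminant $\mathcal{D} = a^2(k^{2n}-1)^2 - 4 k^{2n} b^2$. Writing $\alpha = d'(P_1, P_2) = \log k$, the analysis in Lemma \ref{pentagon-2} yields $\tanh \delta(P_1, P_2, P) = a/\sqrt{a^2 + b^2}$, and the identity $e^{\Delta_n'(P_1,P_2)} = (e^{n\alpha} + 1)/(e^{n\alpha} - 1)$ simplifies to $\tanh \Delta_n'(P_1, P_2) = 1/\cosh(n\alpha)$. Squaring and comparing one sees $\operatorname{sgn}(\mathcal{D}) = \operatorname{sgn}\bigl(\delta(P_1, P_2, P) - \Delta_n'(P_1, P_2)\bigr)$, which is exactly the trichotomy in the statement.

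The main technical obstacle is the orientation bookkeeping in the computation of $\psi_{P_1P_2}$: one must verify carefully that for $w > 0$, counterclockwise from $w$ on $\mathbb{R} \cup \{\infty\}$, the image $-k^2/w$ is encountered before $-1/w$. This requires correctly identifying the orientation induced on $\mathbb{R} \cup \{\infty\}$ by the orientation-preserving isometry $\Phi$ and the resulting cyclic position of $w$, $-1/w$, and $-k^2/w$. Once this step is settled, the remainder is elementary algebra together with a routine isometric transfer back to $D$.
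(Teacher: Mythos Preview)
Your proposal is correct and follows essentially the same approach as the paper: both transfer to $\mathbb{H}$ with $P_1\mapsto ki$, $P_2\mapsto i$, obtain the identical formula $\psi_{P_1P_2}^{2n}(w)=w/k^{2n}$ for $w>0$, derive the same quadratic $w^2-(k^{2n}+1)aw+k^{2n}(a^2+b^2)=0$, and read off $\tau_n(P)$ from the sign of its discriminant. The only cosmetic difference is in the final comparison---the paper computes the hyperbolic distance from the critical half-line $\{y=\tfrac{k^{2n}-1}{2k^n}x\}$ to the imaginary axis by the integral $\int_0^\theta \sec w\,dw$ and matches it with $\Delta_n'$, whereas you phrase the same identity via $\tanh\delta=a/\sqrt{a^2+b^2}$ and $\tanh\Delta_n'=1/\cosh(n\alpha)$; these are equivalent manipulations.
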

\begin{proof}
There exists  an isometry $f:D\to {\mathbb H}$ such that
 $f(P_1)=ki$ and $f(P_2)=i$, where $k>1$ and $f$. The map $f$ is naturally extended to the map $\Bar{f}$ from $D\cup S^1$ to 
 ${\mathbb H}\cup {\mathbb R} \cup \{\infty\}$ such that $\Bar{f}(v_1)=\infty$ and $\Bar{f}(v_2)=0$. 
Let $\phi:=f\circ\psi_{P_1P_2}\circ f^{-1}$.
Let $u+vi=f(P)$ with $u,v\in \mathbb{R}$.
We suppose that $u>0$.
In  hyperbolic geometry, a line in ${\mathbb H}$ is a semicircle that intersects the real axis at a right angle.
We consider a semicircle $C$ in ${\mathbb H}$ with center $\alpha$, where $-\beta$ and $\gamma i$ lie on $C$, for $\alpha,\beta, \gamma\in {\mathbb R}$.
Then, we have
\begin{align*}
\alpha^2+\gamma^2=(\alpha+\beta)^2,
\end{align*}
which implies $\alpha=\frac{\gamma^2-\beta^2}{2\beta}$.
Thus, $2\alpha+\beta=\frac{\gamma^2}{\beta}$ is on $C$.
Therefore, for $w\in {\mathbb R}$ with $w\ne 0$, we have
\begin{align}
\phi(w)=\begin{cases}
-\frac{k^2}{w}&\text{if\ }w>0,\\
-\frac{1}{w}&\text{if\ }w<0.
\end{cases}
\end{align} 
Therefore, we have for $w\in \mathbb{R}_{>0}$ and  $n\in {\mathbb{Z}_{>0}}$,
\begin{align*}
\phi^{2n}(w)=\frac{w}{k^{2n}}.
\end{align*}
Let $C'$ be the circle whose center is on the real axis and which passes through  $w$ and 
$\frac{w}{k^{2n}}$.
The following is the condition $u+vi\in C'$:
\begin{align*}
\left(u-\dfrac{1+k^{2n}}{2k^{2n}}w\right)^2+v^2=\left(\dfrac{k^{2n}-1}{2k^{2n}}w\right)^2,
\end{align*}
which implies 
\begin{align}\label{(u^2+v^2)}
w^2-(k^{2n}+1)uw+k^{2n}(u^2+v^2)=0
\end{align}
The discriminant of formula (\ref{(u^2+v^2)}) for w is 
\begin{align*}
\mathrm{D}=(k^{2n}+1)^2u^2-4k^{2n}(u^2+v^2).
\end{align*}
Since $\mathrm{D}=((k^{2n}-1)u-2k^{n}v)((k^{2n}-1)u+2k^{n}v)$ and $u,v,k-1>0$, we have
\begin{align}\label{m=begincases}
\tau_n(P)=\begin{cases}0 & \text{if }v>\left(\dfrac{k^{2n}-1}{2k^{n}}\right)u,\\
1 & \text{if }v=\left(\dfrac{k^{2n}-1}{2k^{n}}\right)u,\\
2 & \text{if }v<\left(\dfrac{k^{2n}-1}{2k^{n}}\right)u.
\end{cases}
\end{align}
We denote the set $\{x+yi\in {\mathbb H}\mid y=\left(\dfrac{k^{2n}-1}{2k^{n}}\right)x\}$ by $L$.
Let $\theta\in (0,\pi/2)$ be the angle between the imaginary axis and $L$.
We will calculate the length $d$ of a vertical line from each point of $L$ to the imaginary axis in the hyperbolic sense. Let $x+iy\in L$ with $r=\sqrt{x^2+y^2}$.
Then, the length of the curve $r\sin w +ir\cos w (0\leq w\leq \theta)$ is
\begin{align}\label{int_0}
d=\int_0^{\theta} \dfrac{dw}{\cos w}= \log\left(\dfrac{1+\tan(\theta/2)}{1-\tan(\theta/2)}\right).
\end{align}
We remark that $d$ depends only on $\theta$.
On the other hand, from the fact that $\tan \theta=\dfrac{2k^{n}}{k^{2n}-1}$ we see that $\theta=\pi-2\arctan k^n$.
Therefore, from (\ref{int_0}),
we have
\begin{align}\label{d=log}
d=\log\left(\dfrac{k^{n}+1}{k^{n}-1}\right).
\end{align}
The distance between $i$ and $ki$ is
\begin{align}\label{int_1}
\int_1^{k} \dfrac{dy}{y}=\log k.
\end{align}
From (\ref{d=log}) and (\ref{int_1}), we have
\begin{align}\label{d=Delta}
d=\Delta_n'(P_1,P_2).
\end{align}
Similarly, we can consider the case of $u<0$.
The theorem follows
 from (\ref{m=begincases}) and (\ref{d=Delta}).
\end{proof}

We provided the number of periodic points for the case of $\rho=1/3$ in \cite{NY}.
For the case of $\rho=2/5$, we present an upper bound for the number of periodic points.

\begin{thm}\label{further-2} 
Let $\triangle P_1P_2P_3$ be a triangle in $D$ with $\rho(\psi_{\triangle P_1P_2P_3})=\frac{2}{5}$.
Then, the number of periodic orbits of $\psi_{\triangle P_1P_2P_3}$ with period $5$ is less than or equal to $6$.
\end{thm}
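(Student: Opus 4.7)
The plan is to combine a pigeonhole bound on the number of pentagram bars through each triangle vertex with a single application of Lemma \ref{further-1} per vertex.

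\textbf{Step 1 (bars per vertex).} Let $\{\alpha_0,\ldots,\alpha_4\}$ be a period-$5$ orbit of $\psi_{\triangle P_1P_2P_3}$, ordered as in Lemma \ref{pentagon-1}(3). By Proposition \ref{fund-2}(1) each pentagram bar $\alpha_i\alpha_{i+2}$ passes through exactly one triangle vertex; let $n_k$ count the bars through $P_k$, so $n_1+n_2+n_3=5$. I claim $n_k\le 2$. Two distinct bars through the same vertex $V$ cannot share an orbit-point endpoint, since the chord of $S^1$ through $V$ and any fixed endpoint is unique; hence two bars through $V$ use four distinct orbit points, and three would require six distinct orbit points, which is impossible with only five available. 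Together with $\sum n_k=5$, the triple $(n_1,n_2,n_3)$ must be a permutation of $(2,2,1)$, so exactly one vertex of the triangle carries a single bar. After relabeling, assume this singly-used vertex is $P_3$.

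\textbf{Step 2 (reduction to the segment bar-billiard).} Let $\alpha_i\alpha_{i+2}$ denote the unique pentagram bar through $P_3$. I will show that $\psi_{\triangle P_1P_2P_3}=\psi_{P_1P_2}$ at each orbit point where $\psi_{\triangle P_1P_2P_3}$ selects $P_1$ or $P_2$---that is, at the four points $\alpha_{i+2},\alpha_{i+4},\alpha_{i+1},\alpha_{i+3}$. Indeed, the selection rule in Section 2 picks the vertex minimizing the tangent gradient; if $\psi_\triangle$ picks $P_1$ at a point $v$, then $P_1$'s gradient is the smallest of the three, and in particular smaller than $P_2$'s, so $\psi_{P_1P_2}$ also picks $P_1$ at $v$. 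The same argument applies when $P_2$ is selected. Iterating four times along the orbit then gives
\begin{align*}
\psi_{P_1P_2}^{4}(\alpha_{i+2})=\psi_{\triangle P_1P_2P_3}^{4}(\alpha_{i+2})=\alpha_i,
\end{align*}
while the chord $\alpha_{i+2}\alpha_i$ passes through $P_3$ by construction.

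\textbf{Step 3 (application of Lemma \ref{further-1}).} Applying Lemma \ref{further-1} with $P=P_3$, line $P_1P_2$, and $n=2$, the set of $w\in S^1$ for which $P_3$ lies on the line $w\,\psi_{P_1P_2}^{4}(w)$ has cardinality at most $2$. By Step 2, each period-$5$ orbit with $n_3=1$ contributes such a point $w=\alpha_{i+2}$, and distinct orbits yield distinct $w$. Hence there are at most $2$ period-$5$ orbits in which the singly-used vertex is $P_3$. Summing over the three possible choices of the singly-used vertex gives the bound $3\cdot 2=6$.

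The main obstacle will be Step 2: identifying $\psi_{\triangle P_1P_2P_3}$ with $\psi_{P_1P_2}$ on the four intermediate orbit points, which requires a careful comparison of the gradient-minimization rules defining $\psi_U$ for the two different convex sets. Once this identification is in hand, the pigeonhole reduction to composition $(2,2,1)$ and the invocation of Lemma \ref{further-1} are routine.
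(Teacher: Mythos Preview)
Your approach is essentially the same as the paper's: assign to each period-$5$ orbit a vertex $P_k$ and a point $w$ in the orbit such that $P_k$ lies on the chord $w\,\psi_{P_iP_j}^{4}(w)$ (with $\{i,j,k\}=\{1,2,3\}$), then invoke Lemma~\ref{further-1} with $n=2$ to get at most two such $w$ per vertex, hence at most six orbits. Your Step~2 justification (that $\psi_{\triangle}$ agrees with $\psi_{P_1P_2}$ whenever the selected vertex is $P_1$ or $P_2$, because the minimum over three gradients is in particular the minimum over two) is correct and in fact more explicit than the paper's treatment of the same point.

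There is, however, a genuine gap in Step~1. You claim via Proposition~\ref{fund-2}(1) that each pentagram bar passes through \emph{exactly one} triangle vertex, so that $n_1+n_2+n_3=5$ and the composition $(2,2,1)$ is forced. But Proposition~\ref{fund-2}(1) only guarantees the bar passes through \emph{at least one} vertex; if an orbit point coincides with one of the $u_i$, the corresponding bar lies along an edge of the triangle and passes through two vertices. In that case $\sum n_k=6$, your bound $n_k\le 2$ forces $n_1=n_2=n_3=2$, and there is no singly-used vertex, so your Step~3 does not cover this orbit. The paper treats this case separately: if, say, $P_1$ and $P_2$ both lie on bar $0$, one shows $P_1$ must lie on exactly one other bar $j$, and then the four consecutive bars starting after bar $j$ each pass through $P_2$ or $P_3$ (the doubly-used bar $0$ among them still has $P_2$), so $\psi_{P_2P_3}^{4}$ continues to agree with $\psi_{\triangle}^{4}$ along those steps and $P_1$ lies on the chord $v_{j+1}\,\psi_{P_2P_3}^{4}(v_{j+1})$. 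With this degenerate case patched in, your argument is complete.
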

\begin{proof}
We assume that $\triangle P_1P_2P_3$ has vertices $P_1, P_2$, and $P_3$ listed in counterclockwise order without loss of generality. Let $v\in {S^1}$ be a periodic point of $\psi_{\triangle P_1P_2P_3}$ with  period $5$.
We set $v_i=\psi_{\triangle P_1P_2P_3}^i(v)$ for  $i\in \mathbb{Z}_{\geq 0}$.
It is easy to see that if there exists a point $P_j$ for $1\leq j\leq 3$ such that $P_j$ lies on the line $v_iv_{i+1}$ for three different values of $i$ with $0\leq i \leq 4$, then a contradiction arises.
First, we suppose that for each line $v_iv_{i+1}$ for $0\leq i \leq 4$ only one vertex of $\triangle P_1P_2P_3$ is on its line.  
If there exists a point $P_j$ that does not lie on any of the lines $v_iv_{i+1}$ with $0\leq i \leq 4$, then there must exist a point $P_k$ that lies on at least three of these lines. This leads to a contradiction.
Therefore, there exists a point $P_l$ which is only on one line among the lines  $v_iv_{i+1}$ for $0\leq i \leq 4$. 
Thus, there must exist a point $P_l$ that lies on only one of the lines $v_iv_{i+1}$ with $0\leq i \leq 4$.
Without loss of generality, we assume that $P_1$ lies on the line $v_0v_{1}$ but not on any of the lines $v_iv_{i+1}$ for $1\leq i \leq 4$.
Then, we see that $\psi_{P_2P_3}^i(v_1)=v_{i+1}$ $1\leq i \leq 4$ and $P_1$ is on the line $v_5v_{1}$.
We now assume that there exists an index $i$ with $0\leq i \leq 4$ such that two vertices of $\triangle P_1P_2P_3$ lie on the line $v_iv_{i+1}$.
We suppose that $P_1$ and $P_2$ are  on the line $v_0v_{1}$ 
without loss of generality.
If $P_1$ does not lie on any of the lines $v_iv_{i+1}$ with $1\leq i \leq 4$, then either $P_2$ or $P_3$ lies on the line $v_iv_{i+1}$ for three different values of $i$ with $0\leq i\leq 4$, leading to a contradiction.
Thus, there exists a unique index $j$ with $1\leq j\leq 4$ such that $P_1$ lies on the line $v_jv_{j+1}$.
We observe that $\psi_{P_2P_3}^i(v_{j+1})=v_{j+1+i}$ for $1\leq i\leq 4$, and since $P_1$ lies on the line $v_jv_{j+1}$, it must lie on the line $v_{j+5}v_{j+6}$.
From Lemma \ref{further-1}, we have
\begin{align*}
\sharp \{v\in S^1\mid \text{$P_1$ is on the line $v\psi_{P_2P_3}^4(v)$}\}\leq 2.
\end{align*}
Therefore, the number of periodic orbits of $\psi_{\triangle P_1P_2P_3}$ with period $5$ is less than or equal to $6$.

\end{proof}

In Theorem \ref{pentagon-8} we give a sufficient condition for $\rho=2/5$.
However, the following theorem shows that if the condition of Theorem \ref{pentagon-8} is not satisfied in a given case, then $\rho=2/5$ does not hold in that case.
Thus, Theorem \ref{pentagon-8} provides a weak sufficient condition for $\rho=2/5$.
\begin{thm}\label{further-3}
Let $\triangle P_1P_2P_3$ be a triangle in $D$.
If for all $i,j,k$ with $\{i,j,k\}=\{1,2,3\}$,
$\Delta_2'(P_i,P_j)>\delta(P_i,P_j,P_k)$, then we have $\rho(\psi_{\triangle P_1P_2P_3})>\frac{2}{5}$.
\end{thm}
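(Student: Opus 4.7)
The plan is a two-step argument: first establish $\rho(\psi_{\triangle P_1P_2P_3})\geq\tfrac{2}{5}$ by a monotone comparison with a slightly enlarged triangle, then rule out equality via the combinatorics of the pentagram orbit together with Lemma~\ref{further-1}.

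For Step~1, I would fix $P_2$ and $P_3$ and push $P_1$ outward along the hyperbolic perpendicular from $P_1$ to the line $P_2P_3$, on the same side of $P_2P_3$ as $P_1$, to the unique point $P_1^{*}\in D$ with $\delta(P_2,P_3,P_1^{*})=\Delta_2'(P_2,P_3)$. That such a $P_1^{*}$ exists inside $D$ follows because the hypercycle at hyperbolic distance $\Delta_2'(P_2,P_3)$ from the line $P_2P_3$ lies entirely in $D$ (Lemma~\ref{pentagon-3}). The strict inequality $\delta(P_2,P_3,P_1)<\Delta_2'(P_2,P_3)$ in the hypothesis places $P_1$ strictly between the foot of the perpendicular and $P_1^{*}$, so $\triangle P_1P_2P_3\subset\triangle P_1^{*}P_2P_3$. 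Lemma~\ref{pentagon-7} gives $\rho(\psi_{\triangle P_1^{*}P_2P_3})=\tfrac{2}{5}$, and Lemma~\ref{fund-4}(2) then yields $\rho(\psi_{\triangle P_1P_2P_3})\geq\tfrac{2}{5}$.

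For Step~2, suppose for contradiction that $\rho(\psi_{\triangle P_1P_2P_3})=\tfrac{2}{5}$. Lemma~\ref{pentagon-1} produces a period-$5$ orbit $v_0,\ldots,v_4$, and by Proposition~\ref{fund-2} each of the five tangent lines $\ell_i:=v_iv_{i+1\bmod 5}$ passes through a vertex of the triangle. The counting used in the proof of Theorem~\ref{further-2} shows each vertex lies on at most two of the $\ell_i$, so in the generic case (no $\ell_i$ coincides with an edge of the triangle) the incidence distribution is $(2,2,1)$. The hypothesis is symmetric in $(P_1,P_2,P_3)$, so after relabelling we may assume that $P_1$ is the vertex on exactly one $\ell_i$, and after cyclically re-indexing the orbit we may take this line to be $\ell_0=v_0v_1$, so that each of $\ell_1,\ell_2,\ell_3,\ell_4$ passes through $P_2$ or $P_3$ only. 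For every $i\in\{1,2,3,4\}$ the triangle-tangent at $v_i$ goes through some $P_j$ with $j\in\{2,3\}$, which means $F(P_j,v_i)$ is the minimum of $F(P_1,v_i),F(P_2,v_i),F(P_3,v_i)$ in the basis $\{v_i,R(v_i)\}$, hence in particular the minimum over $\{P_2,P_3\}$. Therefore the CCW tangent from $v_i$ to the segment $P_2P_3$ also passes through $P_j$, so $\psi_{P_2P_3}(v_i)=\psi_{\triangle P_1P_2P_3}(v_i)=v_{i+1}$. Iterating gives $\psi_{P_2P_3}^{4}(v_1)=v_0$, so the line $v_1\,\psi_{P_2P_3}^{4}(v_1)$ equals $\ell_0$ and contains $P_1$; hence $\tau_2(P_1)\geq 1$. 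Lemma~\ref{further-1} then forces $\delta(P_2,P_3,P_1)\geq\Delta_2'(P_2,P_3)$, contradicting the hypothesis.

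The non-generic configuration in which some $\ell_i$ coincides with an edge of the triangle (forcing a $(2,2,2)$ incidence distribution, with the orbit passing through the points at infinity of that edge) is handled by a small inward perturbation of $P_1$: the open hypothesis $\delta<\Delta_2'$ is preserved, and by the continuity of $\rho$ (Lemma~\ref{fund-3}(3)) together with the lower bound from Step~1 and the plateau structure of rational rotation numbers for polygonal maps (traceable to Lemma~\ref{fund-5}), the perturbed rotation number remains $\tfrac{2}{5}$ while the perturbation generically breaks the edge-line coincidence; the generic argument then applies to the perturbed triangle. The main obstacle I expect is exactly this Step~2: identifying the vertex with a unique pentagram-incidence and then reducing four consecutive iterates of $\psi_{\triangle P_1P_2P_3}$ to iterates of the segment map $\psi_{P_2P_3}$, so that Lemma~\ref{further-1} becomes applicable at the right value $n=2$. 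Combining Steps~1 and~2 yields $\rho(\psi_{\triangle P_1P_2P_3})>\tfrac{2}{5}$.
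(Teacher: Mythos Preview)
Your overall strategy coincides with the paper's: compare with an enlarged triangle having $\delta=\Delta_2'$ to get $\rho\geq\tfrac25$, then show that $\rho=\tfrac25$ would force some $\delta\geq\Delta_2'$ via the pentagram combinatorics and Lemma~\ref{further-1}. Two points need repair.

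\textbf{Step 1.} The implication ``$P_1$ lies between the foot $S$ and $P_1^{*}$, hence $\triangle P_1P_2P_3\subset\triangle P_1^{*}P_2P_3$'' fails when $S$ does not lie on the \emph{segment} $P_2P_3$: in that case $P_1$ can sit outside $\triangle P_1^{*}P_2P_3$. The paper sidesteps this by not insisting that the new apex lie on the perpendicular through $P_1$; it just picks \emph{some} $P$ on the $\Delta_2'$-hypercycle with $\triangle P_1P_2P_3\subset\triangle P_2P_3P$, which is easy to arrange.

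\textbf{Step 2, non-generic case.} This is the real gap. An inward perturbation of $P_1$ shrinks the triangle, so Lemma~\ref{fund-4}(2) gives only $\rho_{\mathrm{new}}\geq\tfrac25$; you cannot conclude $\rho_{\mathrm{new}}=\tfrac25$, and without that there is no period-$5$ orbit to feed into the generic argument. Lemma~\ref{fund-5} asserts only that $\psi_B$ is not conjugate to a rotation; it does not produce a two-sided plateau for $\rho$ (a single semi-stable periodic orbit would already block that). Even if you perturb outward and combine $\rho_{\mathrm{new}}\leq\tfrac25$ with Step~1 for the perturbed triangle to force $\rho_{\mathrm{new}}=\tfrac25$, you still owe an argument that the resulting period-$5$ orbit is generic. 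The paper avoids all of this by treating the non-generic case directly in the proof of Theorem~\ref{further-2}: if some $\ell_i$ contains two vertices, say $P_1,P_2\in\ell_0$, one shows $P_1$ lies on exactly one further $\ell_j$ and that $\psi_{P_2P_3}^{m}(v_{j+1})=v_{j+1+m}$ for $1\le m\le 4$, so $P_1$ lies on the line $v_{j+1}\,\psi_{P_2P_3}^{4}(v_{j+1})$ and Lemma~\ref{further-1} applies exactly as in the generic case. No perturbation is needed.
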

\begin{proof}
Let $\triangle P_1P_2P_3$ be a triangle in $D$ with the the given hypothesis.
We suppose that $\rho(\psi_{\triangle P_1P_2P_3})=\frac{2}{5}$. 
From the proof of Theorem \ref{further-2}, we know that there exist $v \in D$ and indices $i,j,k$ with ${i,j,k}={1,2,3}$ such that $\psi_{P_iP_j}^m(v) = \psi_{\triangle P_1P_2P_3}^m(v)$ for $1\leq m\leq 4$, and $P_k$ lies on the line $v\psi_{P_iP_j}^4(v)$.
Lemma \ref{further-1} implies that $\Delta_2'(P_i,P_j) \leq \delta(P_i,P_j,P_k)$. However, this contradicts the hypothesis of the theorem.
Thus, we have shown that $\rho(\psi_{\triangle P_1P_2P_3})\ne\frac{2}{5}$. Moreover, we can easily find a point $P\in D$ such that $\triangle P_1P_2P_3 \subset \triangle P_iP_jP$ and $\delta(P_i,P_j,P) = \Delta_2'(P_i,P_j)$. By Theorem \ref{pentagon-8}, we know that $\rho(\psi_{\triangle P_iP_jP})=\frac{2}{5}$. Therefore, we conclude that $\rho(\psi_{\triangle P_1P_2P_3})>\frac{2}{5}$.\end{proof}

\section{Isosceles triangle}
In this chapter, we consider a condition that $\rho=2/5$ for sufficiently large isosceles triangles.

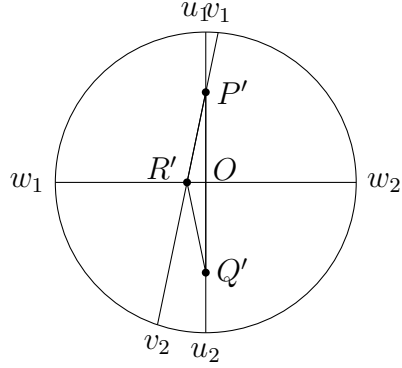
\begin{figure}
    \centering
\begin{tikzpicture}
\draw(0,0) circle (2);
\draw (0,1.2)--(0,-1.2)--(-0.2490,0)--cycle;
\draw (0.164609987,1.9932143)--(-0.6421009,-1.894124155);
\draw (-2,0)--(2,0);
\draw (0,-2)--(0,2);

\draw (-0.23,0.2)node[left]{$R'$};
\draw (-0.05,0.17)node[right]{$O$};
\draw (0,1.2)node[right]{$P'$};
\draw (0,-1.2)node[right]{$Q'$};
\draw (0.164609987,1.9932143)node[above]{$v_1$};
\draw (-0.6421009,-1.894124155)node[below]{$v_2$};
\draw (-0.13,2.0)node[above]{$u_1$};
\draw (0,-2.0)node[below]{$u_2$};
\draw (-2,0)node[left]{$w_1$};
\draw (2,0)node[right]{$w_2$};

\coordinate (P) at (0,1.2);
\fill (P) circle [radius=1.5pt];
\coordinate (Q) at (0,-1.2);
\fill (Q) circle [radius=1.5pt];
\coordinate (R) at (-0.2490,0);
\fill (R) circle [radius=1.5pt];

 \end{tikzpicture}
    \caption{Isosceles triangle in Lemma \ref{Isosceles-1}. }
    \label{fig:pentagon7}
\end{figure}

\begin{thm}\label{Isosceles-1}
Let $\triangle PQR$ be an isosceles triangle  in $D$ with $d'(P,R)=d'(Q,R)$, $d'(P,Q)>\log 3$ and
$\delta(P,Q,R)<\Delta_2'(P,Q)$.
Then, $\rho(\psi_{\triangle PQR})>\frac{2}{5}$.
\end{thm}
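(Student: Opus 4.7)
The plan is to invoke Theorem \ref{further-3}: it suffices to verify the three strict inequalities $\delta(P_i,P_j,P_k) < \Delta_2'(P_i,P_j)$ for the three unordered pairs of vertices. The pair $\{P,Q\}$ is provided by hypothesis, and the isosceles condition $d'(P,R)=d'(Q,R)$ together with the symmetry $P\leftrightarrow Q$ identifies the remaining two inequalities, so only one new inequality must be established: $\delta(P,R,Q)<\Delta_2'(P,R)$.

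Set $a:=d'(P,R)=d'(Q,R)$, $b:=d'(P,Q)$, $\delta:=\delta(P,Q,R)$, and $\delta':=\delta(P,R,Q)$. Dropping the perpendicular from $R$ to line $PQ$, its foot $S$ is the midpoint of $PQ$ by symmetry, so $d'(P,S)=b/2$. The hyperbolic Pythagorean theorem applied to the right triangle $PSR$ gives
\begin{align*}
\cosh a = \cosh(b/2)\cosh\delta,
\end{align*}
while the sine law for hyperbolic right triangles yields $\sin\angle QPR=\sinh\delta/\sinh a$. Applying the same sine law to the right triangle $PTQ$, where $T$ is the foot of the perpendicular from $Q$ to line $PR$, also gives $\sin\angle QPR=\sinh\delta'/\sinh b$. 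Equating yields the key identity $\sinh\delta'=\sinh\delta\cdot\sinh b/\sinh a$.

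A direct computation from the definition of $\Delta_n'$ gives $\sinh\Delta_2'(P,Q)=1/\sinh 2b$ and $\cosh\Delta_2'(P,Q)=\coth 2b$, and analogously for $\Delta_2'(P,R)$. The hypothesis $\delta<\Delta_2'(P,Q)$ then reads $\sinh\delta<1/\sinh 2b$, so $\sinh\delta'<1/(2\sinh a\,\cosh b)$. The goal $\delta'<\Delta_2'(P,R)$ reads $\sinh\delta'<1/(2\sinh a\,\cosh a)$, and hence reduces to showing $\cosh a\leq\cosh b$. From $\cosh a=\cosh(b/2)\cosh\delta$ and $\cosh\delta<\coth 2b$, it suffices to check the one-variable inequality $\cosh(b/2)\coth 2b\leq\cosh b$; clearing denominators and applying the double-angle identities $\sinh 2b=4\sinh(b/2)\cosh(b/2)\cosh b$ and $\cosh 2b=2\cosh^2 b-1$, this rearranges to
\begin{align*}
2\cosh^2 b\,(2\sinh(b/2)-1)\geq -1,
\end{align*}
which holds automatically as soon as $\sinh(b/2)\geq 1/2$.

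The hypothesis $b>\log 3$ gives $\sinh(b/2)>\sinh(\tfrac{1}{2}\log 3)=1/\sqrt 3>1/2$, so the inequality holds with room to spare. The main obstacle is this final step: one must handle the hyperbolic manipulations carefully enough to see that the only quantitative input required on $b$ is $\sinh(b/2)>1/2$, and then verify that $\log 3$ sits comfortably above the sharp threshold $2\sinh^{-1}(1/2)=\log\tfrac{3+\sqrt 5}{2}\approx 0.962$ (so the bound $\log 3$ is sufficient but not optimal). Everything else is a routine application of the hyperbolic Pythagorean theorem, the sine law for right triangles, and Theorem \ref{further-3}.
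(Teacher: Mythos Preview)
Your proof is correct and follows the same strategy as the paper: invoke Theorem~\ref{further-3}, use the isosceles symmetry to reduce to the single inequality $\delta(P,R,Q)<\Delta_2'(P,R)$, relate the two altitudes via the right-triangle sine law $\sinh\delta'=\sinh\delta\,\sinh b/\sinh a$, and reduce the goal to $\cosh a\le\cosh b$. The only difference is in this last step: the paper normalizes to explicit Beltrami--Klein coordinates $P'=(0,t)$, $Q'=(0,-t)$, $R'=(r,0)$ and bounds $d'(P',R')$ through the cross-ratio and an explicit bound on $|r|$, whereas you stay coordinate-free and use the hyperbolic Pythagorean identity $\cosh a=\cosh(b/2)\cosh\delta$ together with $\cosh\delta<\coth 2b$ to reduce everything to the one-variable inequality $\cosh(b/2)\coth 2b\le\cosh b$, which is cleaner and makes transparent exactly where the hypothesis $b>\log 3$ enters.
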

\begin{proof}
Let
\begin{align}\label{t=dfraced}
t=\dfrac{e^{d'(P,Q)}-1}{e^{d'(P,Q)}+1},
\end{align}
which implies $1/2<t<1$.
We put $P'=(0,t)$, and $Q'=(0,-t)$.
Then, since $d'(P,Q)=d'(P',Q')$ holds, 
there exist an isometry $\phi$ on $D$
such that $\phi(P)=P'$ and $\phi(Q)=Q'$. 
We put $R'(r,0)=\phi(R)$, $\theta=\angle R'P'Q'$, and
$h=\delta(P',R',Q')$.
We note that $\delta(P,Q,R)=\delta(P',Q',R')=d'(O,R')$, where 
$O=(0,0)$.
We suppose that $r<0$ without loss of generality.
By the law of right-angled  triangles \cite{B}, 
we have
\begin{align*}
&\sinh d'(Q',P')\sin \theta=\sinh h,\\
&\sinh d'(P',R')\sin \theta=\sinh d'(O,R').
\end{align*}
By eliminating $\sin \theta$ from above equations, we have
\begin{align}\label{sinhhdfrac}
\sinh h=\dfrac{\sinh d'(O,R') \sinh d'(P',Q')}{\sinh d'(P',R')}.
\end{align}
From the fact that $\sinh \Delta_2'(P',Q')=\dfrac{2e^{2d'(P',Q')}}{e^{4d'(P',Q')}-1}$, the hypothesis of the theorem,  and 
(\ref{sinhhdfrac}), we have
\begin{align}\label{sinhh<}
\sinh h<\dfrac{2e^{2d'(P',Q')}\sinh d'(P',Q')}{(e^{4d'(P',Q')}-1)\sinh d'(P',R')}.
\end{align}
From (\ref{t=dfraced}), we have 
\begin{align}\label{2e^2d}
\dfrac{2e^{2d'(P',Q')}\sinh d'(P',Q')}{(e^{4d'(P',Q')}-1)}=\dfrac{1-t^2}{2(1+t^2)}.
\end{align}
We will show 
\begin{align}\label{1-t^22}
\dfrac{1-t^2}{2(1+t^2)}<\dfrac{e^{d'(P',R')}}{e^{2d'(P',R')}+1},
\end{align}
which yields that by considering (\ref{sinhh<}) and (\ref{2e^2d})
\begin{align}\label{h<Delta2}
h<\Delta_2'(P',R').
\end{align}
Let $v_1,v_2\in {S^1}$ be points at infinity, which are intersection points with the line $P'R'$ and ${S^1}$.
Let $v_1$ be closer to $P'$ than $Q'$.
We put $u_1=(0,1),u_2=(0,-1), w_1=(-1,0)$, and $w_2=(0,1)$. See Figure \ref{fig:pentagon7}.
We have
\begin{align}\label{dPR=}
d'(P',R')&=\dfrac{1}{2}\log\dfrac{|v_2P'||v_1R'|}{|v_1P'||v_2R'|}\nonumber\\
&<\dfrac{1}{2}\log\dfrac{|u_2P'||w_2R'|}{|u_1P'||w_1R'|}\nonumber\\
&=\dfrac{1}{2}\log\dfrac{(1+t)(1+|r|)}{(1-t)(1-|r|)}.
\end{align}
From the fact that $d'(O,R')<\Delta_2'(P,Q)$ and Lemma \ref{pentagon-3}, we have
\begin{align*}
|r|<\dfrac{(t + 1)^2(1 - t)^2}{t^4 + 6t^2 + 1},
\end{align*}
which yields 
\begin{align*}
\dfrac{1+|r|}{1-|r|}<\dfrac{(t^2+1)^2}{4t^2}\leq \dfrac{1+t}{1-t},
\end{align*}
by $1/2<t<1$.
Therefore, we have
\begin{align}\label{1t1t}
d'(P',R')<\log\dfrac{1+t}{1-t}.
\end{align}
On the other hand, according to the hypothesis of the theorem, we have
\begin{align}\label{log3}
d'(P',R')>d'(P',Q')>\dfrac{1}{2}\log 3.
\end{align}
Using (\ref{1t1t}) and (\ref{log3}), we can deduce the inequality (\ref{1-t^22}). Therefore, we have the inequality (\ref{h<Delta2}). Finally, the required conclusion follows from Theorem \ref{further-3}.
\end{proof}

Next, we will consider the upper bound on the height of an isosceles triangle with $\rho = \frac{2}{5}$.
 For this, we require a triangle $\triangle PQR$ as considered in Lemma \ref{pentagon-2}. Let $0 < t < 1$,
 and define $P=(0,t)$, $Q=(0,-t)$, and $R=\left(\frac{t-1}{t+1},0\right)$. Note that $\delta(P,Q,R)=\frac{1}{2}\Delta_1'(P,Q)$.
Furthermore, we define $A_1=(1,0)$, $A_2=\left(\frac{t^2-1}{t^2+1},\frac{2t}{t^2+1}\right)$, $A_3=(0,-1)$,
 $A_4=(0,1)$, and $A_5=\left(\frac{t^2-1}{t^2+1},-\frac{2t}{t^2+1}\right)$. It can be verified that $\psi_{\triangle PQR}(A_i)=A_{i+1}$ for $1\leq i\leq 4$, and
$\psi_{\triangle PQR}(A_5)=A_{1}$.
Let $u_2,u_3\in {S^1}$ be points at infinity, which are intersection points with the line $QR$ and ${S^1}$
as shown in Figure \ref{fig:pentagon8}.
We put $B_1=(-1,0)$.
Let $u_1\in {S^1}$ be a point at infinity, which is  an intersection point with the line $u_2P$ and ${S^1}$
as shown in Figure \ref{fig:pentagon8}.
We note that  $\psi_{\triangle PQR}(u_i)=u_{i+1}$ for $i=1,2$.
We put $u_{3+i}=\psi_{\triangle PQR}^i(u_3)$ for $1\leq i\leq 3$.
We will use this notation going forward.

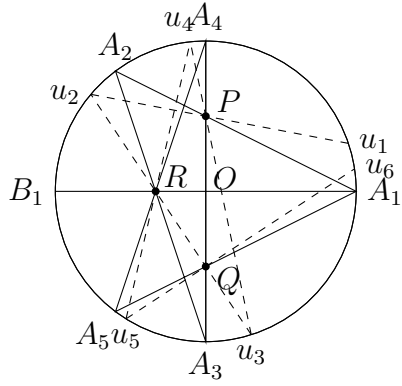
\begin{figure}
    \centering
\begin{tikzpicture}
\draw(0,0) circle (2);

\draw(0,0) circle (2);
\draw (2,0)--(-1.2,1.6)--(0,-2)--(0,2)--(-1.2,-1.6)--cycle;
\draw (-2,0)--(2,0);
\draw (0,-2)--(0,2);
\draw[dashed] (1.89525,0.6387)--(-1.5274,1.291);
\draw[dashed] (0.60433,-1.9065)--(-1.5274,1.291);
\draw[dashed] (0.60433,-1.9065)--(-0.2057203,1.9893916);
\draw[dashed]  (-1.059678,-1.6961959)--(-0.2057203,1.9893916);
\draw[dashed]  (-1.059678,-1.6961959)--(1.9774947,0.2991895871);

\draw (-0.1,0.2)node[left]{$R$};
\draw (-0.05,0.17)node[right]{$O$};
\draw (0,1.2)node[right]{$P$};
\draw (0,-1.2)node[right]{$Q$};
\draw (-1.5,-1.6)node[below]{$A_5$};
\draw (0,2.0)node[above]{$A_4$};
\draw (0,-2.0)node[below]{$A_3$};
\draw (-1.2,1.6)node[above]{$A_2$};
\draw (2,0)node[right]{$A_1$};
\draw (-2,0)node[left]{$B_1$};

\draw (1.89525,0.6387)node[right]{$u_1$};
\draw (-1.5274,1.291)node[left]{$u_2$};
\draw (0.60433,-1.9065)node[below]{$u_3$};
\draw (-0.3557203,1.9893916)node[above]{$u_4$};
\draw (-1.059678,-1.6961959)node[below]{$u_5$};
\draw (1.9774947,0.2991895871)node[right]{$u_6$};

\coordinate (P) at (0,1);
\fill (P) circle [radius=1.5pt];
\coordinate (Q) at (0,-1);
\fill (Q) circle [radius=1.5pt];
\coordinate (R) at (-0.666,0);
\fill (R) circle [radius=1.5pt];

 \end{tikzpicture}
    \caption{$A_i$$(1\leq i \leq 5)$, $u_i$$(1\leq i \leq 6)$, and $B_1$. }
    \label{fig:pentagon8}
\end{figure}

\begin{lem}\label{Isosceles-2}
Let $0.8<t<1$.
Then, $\dfrac{|Pu_2|}{|Pu_1|}<\dfrac{1}{3}$ holds.
\end{lem}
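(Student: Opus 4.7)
The plan is to use the power of the point $P$ with respect to $S^1$ to convert the ratio inequality into a condition on the $y$-coordinate of $u_2$, and then reduce it to a one-variable polynomial inequality in $t$ that can be handled by a monotone decomposition.

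Since $u_1u_2$ is a chord through $P=(0,t)$, the power of the point gives $|Pu_1|\cdot|Pu_2|=1-t^2$, so the target is equivalent to $|Pu_2|^2<(1-t^2)/3$. Writing $u_2=(\cos\alpha,\sin\alpha)$ one has $|Pu_2|^2=1+t^2-2t\sin\alpha$, and the task becomes
\[
\sin\alpha \;>\; \frac{2t^2+1}{3t}=:Y_+.
\]
With $a:=(t-1)/(t+1)$, the line $QR$ has equation $tx-ay=at$; substituting $x^2+y^2=1$ produces the quadratic $q(Y):=(a^2+t^2)Y^2+2a^2tY+t^2(a^2-1)$, whose positive root is $\sin\alpha$. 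Since $q$ has positive leading coefficient, $\sin\alpha>Y_+$ iff $q(Y_+)<0$. Direct expansion, using $a^2(1+t)^2=(1-t)^2$, yields
\[
9t^2(1+t)^2\,q(Y_+)\;=\;(t-1)\,Q(t),
\]
with $Q(t):=4t^7+12t^6+36t^5-24t^4+7t^3-11t^2+t-1$. Since $t-1<0$ on $(0.8,1)$, it suffices to prove $Q(t)>0$ there.

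The final step is a monotone split: write $Q=A+B$ with $A(t):=4t^4(t^3+3t^2+9t-6)$ and $B(t):=7t^3-11t^2+t-1$. On $[0.8,1]$ both factors of $A$ are positive (the cubic equals $3.632$ at $t=0.8$) and increasing (the cubic has derivative $3t^2+6t+9>0$), so $A$ itself is increasing and $A(t)\geq A(0.8)>5.9$. Meanwhile $B'(t)=(21t-1)(t-1)\leq 0$ on $[0.8,1]$, so $B$ is decreasing and $B(t)\geq B(1)=-4$. Therefore $Q(t)\geq A(0.8)+B(1)>1.9>0$.

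The main obstacle is the algebraic reduction to $Q$: one must carry out the expansion of $q(Y_+)$ carefully and spot the factor $(t-1)$, which must appear because both $\sin\alpha$ and $Y_+$ tend to $1$ as $t\to 1^-$, so the inequality $\sin\alpha>Y_+$ saturates in the limit. The lower bound $t>0.8$ is not sharp---the true threshold for $Q>0$ sits near $t\approx 0.74$---but this slack is exactly what makes the final monotone-split estimate close with comfortable margin.
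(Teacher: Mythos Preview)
Your argument is correct and complete. The power-of-a-point identity $|Pu_1|\,|Pu_2|=1-t^2$ cleanly converts the ratio bound into a lower bound on the $y$-coordinate of $u_2$, and the factorisation $9t^2(1+t)^2 q(Y_+)=(t-1)Q(t)$ checks out (the factor $t-1$ being forced, as you note, by the saturation at $t=1$). The monotone split $Q=A+B$ with $A(0.8)>5.9$ and $B(t)\ge B(1)=-4$ closes the estimate with room to spare.

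The paper proceeds differently. Instead of working with $u_2$, it bounds the $y$-coordinate $\beta_1$ of $u_1$ and shows $0<\beta_1<1/3$; then it introduces the comparison point $u=(2\sqrt{2}/3,1/3)\in S^1$, computes $|Pu'|/|Pu|<1/3$ for the other endpoint $u'$ of the chord $uP$, and finally invokes the monotonicity of $\psi_P'$ along the relevant arc (Lemma~\ref{fund-1-2}) to transfer this bound to $|Pu_2|/|Pu_1|=\psi_P'(u_1)$. So the paper trades your single degree-$7$ polynomial inequality for two lighter computations (one showing $\beta_1<1/3$, one evaluating the ratio at the fixed point $u$) glued together by a dynamical monotonicity lemma. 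Your route is more self-contained and purely algebraic; the paper's route reuses structural lemmas already established for $\psi_P$ and keeps each individual calculation smaller, at the cost of an extra geometric step.
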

\begin{proof}
Let $u_1=(\alpha_1,\beta_1)$.
Then, we have
\begin{align*}
\beta_1=\dfrac{3t^5+10t^3+3t-(t^4+3t^3+3t^2+t)\sqrt{4t^3+t^2-2t+1}}{t^6+4t^5+15t^4+4t^3+7t^2+1}.
\end{align*}
We show that $0<\beta_1<1/3$.
From Figure \ref{fig:pentagon8}, it is evident that $\beta_1$ is positive. We can also observe that $f_1(t):= 4t^3 +t^2-2t+1$ is monotonically increasing for $0.8 < t < 1$ and $f_1(0.8) > 2$. Thus, we have $f_1(t)>2$.

Therefore, we have
\begin{align*}
\beta_1<\dfrac{3t^5+10t^3+3t-(t^4+3t^3+3t^2+t)}{t^6+4t^5+15t^4+4t^3+7t^2+1}.
\end{align*}
We see 
\begin{align*}
\dfrac{3t^5+10t^3+3t-(t^4+3t^3+3t^2+t)}{t^6+4t^5+15t^4+4t^3+7t^2+1}<\dfrac{1}{3},
\end{align*}
is equivalent to
\begin{align*}
S:=t(\dfrac{16}{3}t-3)+\dfrac{1}{3}(t(t-1)(t^4-4t^3+14t^2-3t-3)+1)>0.
\end{align*}
It is easily seen that $t(\frac{16}{3}t-3)>0$.
We set $f_2(t):=t^4-4t^3+14t^2-3t-3$.
Since $f_2(t)$ is monotonically increasing for $0.8<t<1$,
we have $1.9216\leq f_2(t)\leq 5$.
Next, we consider the term $\frac{1}{3}(t(t-1)f_2(t)+1)$. Since $f_2(t)$ is bounded by $1.9216 \leq f_2(t) \leq 5$ for $0.8 < t < 1$, we can use the fact that $|t(t-1)| \leq 0.8 \cdot 0.2 = 0.16$ on the interval to obtain the inequality $|t(t-1)f_2(t)| \leq 0.16 \cdot 5 < 1$. Therefore, we have $\frac{1}{3}(t(t-1)f_2(t)+1) > 0$.
Combining these observations, we conclude that $S > 0$, as desired.

We put $u=(2\sqrt{2}/3,1/3)\in S^1$ and $u'(\ne u)$ is the intersection point with the line $uP$ and ${S^1}$.
Then, we have
\begin{align*}
u'=\left(\dfrac{2\sqrt{2}(t^2-1)}{3t^2-2t+3},-\dfrac{t^2-6t+1}{3t^2-2t+3} \right).
\end{align*}
We have
\begin{align*}
\dfrac{|Pu'|}{|Pu|}=\dfrac{3(1-t^2)}{3t^2-2t+3}<\dfrac{3(1-t^2)}{3.32}<\dfrac{1}{3}.
\end{align*}
From Lemma \ref{fund-1} and Lemma \ref{fund-1-2}, we see
\begin{align*}
\dfrac{|Pu_2|}{|Pu_1|}\leq\dfrac{|Pu'|}{|Pu|}.
\end{align*}
Thus, we have the desired conclusion.
\end{proof}

\begin{lem}\label{Isosceles-3}
Let $0.8<t<1$.
Then, $\dfrac{|Ru_3|}{|Ru_2|}<\dfrac{1}{t}$ holds.
\end{lem}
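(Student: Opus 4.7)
The key observation is that both $u_2$ and $u_3$ lie on the line $QR$, which passes through $R$, so $u_3$ is the second intersection of the line $u_2 R$ with $S^1$. This means $\psi_R(u_2) = u_3$, so by Lemma \ref{fund-1}, $|Ru_3|/|Ru_2| = \psi_R'(u_2)$, and the task reduces to bounding $\psi_R'(u_2) < 1/t$.

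To bound $\psi_R'$ globally, I would apply Lemma \ref{fund-1-2} to $P = R = \left(\tfrac{t-1}{t+1},0\right)$. The line $OR$ is the $x$-axis and meets $S^1$ at $(\pm 1, 0)$; a direct calculation gives $|R-(-1,0)| = \tfrac{2t}{t+1}$ and $|R-(1,0)| = \tfrac{2}{t+1}$, so $(-1,0)$ is the closer of the two. By Lemma \ref{fund-1-2}, $\psi_R'$ is unimodal with its global maximum on $S^1$ attained at $v = (-1,0)$, and (by Lemma \ref{fund-1}) this maximum value is $|R-(1,0)|/|(-1,0)-R| = 1/t$.

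It then remains to verify that $u_2 \ne (-1,0)$, so that strict monotonicity yields a strict inequality. Since the line $QR$ has nonzero slope (because $Q=(0,-t)$ is off the $x$-axis), it meets the $x$-axis only at $R$; hence $(-1,0)$ is not on line $QR$, so $u_2 \ne (-1,0)$, and therefore $\psi_R'(u_2) < 1/t$.

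The main potential obstacle is confirming that the monotonicity in Lemma \ref{fund-1-2} is strict enough to exclude equality at $u_2$. This is transparent from the explicit expression $\psi_R'(v) = (1-|OR|^2)/|vR|^2$, which follows from Lemma \ref{fund-1} combined with the power-of-a-point relation at $R$, since $|vR|$ is strictly minimized over $v \in S^1$ only at the closer axis intersection $(-1,0)$. Interestingly, this argument uses only $0 < t < 1$, so the hypothesis $0.8 < t < 1$ appears stronger than what this particular lemma requires, presumably matching the range needed for subsequent lemmas.
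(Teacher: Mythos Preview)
Your proof is correct and is essentially an expanded version of the paper's own argument. The paper's proof consists of the single line
\[
\dfrac{|Ru_3|}{|Ru_2|}<\dfrac{|RA_1|}{|RB_1|}=\dfrac{1}{t},
\]
which implicitly uses exactly the maximum principle for $\psi_R'$ (Lemmas~\ref{fund-1} and~\ref{fund-1-2}) that you spell out; your observation that only $0<t<1$ is needed is also correct.
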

\begin{proof}
We have
\begin{align*}
\dfrac{|Ru_3|}{|Ru_2|}<\dfrac{|RA_1|}{|RB_1|}=\dfrac{1}{t}.
\end{align*}
\end{proof}

\begin{lem}\label{Isosceles-4}
Let $0.8<t<1$.
Then, $\dfrac{|Pu_4|}{|Pu_3|}\leq \dfrac{7}{10}(1-t)$
 holds.
\end{lem}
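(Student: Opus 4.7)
The plan is to convert the ratio $|Pu_4|/|Pu_3|$ into a quantity depending only on $|Pu_3|$ via the power of the point $P$, and then to obtain a clean lower bound on $|Pu_3|$ by locating $u_3$ on the extension of segment $RQ$ past $Q$.

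First I observe that of the two tangent lines from $u_3$ to $\triangle PQR$ (namely line $QR$ itself and the line through the top vertex $P$), the one through $P$ is the counterclockwise tangent, so $u_4=\psi_{\triangle PQR}(u_3)$ is the second intersection of the line through $P$ and $u_3$ with $S^1$ (cf.\ Figure~\ref{fig:pentagon8}). Since $|OP|=t$, the power of the point $P$ with respect to $S^1$ yields
\[
|Pu_3|\cdot|Pu_4|=1-t^2,\qquad\text{so}\qquad\frac{|Pu_4|}{|Pu_3|}=\frac{1-t^2}{|Pu_3|^2}.
\]
Hence it suffices to prove $|Pu_3|^2\geq\dfrac{10(1+t)}{7}$.

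Writing $u_3=(x_3,y_3)$ with $x_3^2+y_3^2=1$, a direct expansion gives $|Pu_3|^2=1+t^2-2ty_3$. Parameterize the line $QR$ by $\gamma(s)=(1-s)R+sQ$, so that $\gamma(1)=Q$ and the $y$-coordinate of $\gamma(s)$ equals $-st$. Then $|\gamma(s)|^2-1$ is a quadratic in $s$ with leading coefficient $x_R^2+t^2>0$ whose values at $s=0$ and $s=1$ are $|R|^2-1<0$ and $|Q|^2-1=t^2-1<0$, respectively; hence the quadratic has a unique root $s_3$ in $(1,\infty)$, and this root corresponds to $u_3$. Consequently $y_3=-s_3 t<-t$, and
\[
|Pu_3|^2=1+t^2-2ty_3>1+t^2+2t^2=1+3t^2.
\]

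The conclusion then follows from an elementary inequality: $\dfrac{1-t^2}{1+3t^2}\leq\dfrac{7(1-t)}{10}$, which after cancelling $1-t>0$ and clearing denominators reduces to $21t^2-10t-3\geq 0$. The positive root of this quadratic is $(5+2\sqrt{22})/21\approx 0.685$, which is strictly less than $0.8$, so the inequality holds throughout the range $0.8<t<1$. The step that requires the most care is the geometric identification of $u_4$ as the second intersection of line $Pu_3$ with $S^1$ together with the observation that $u_3$ lies past $Q$; once these are in place, the lower bound $|Pu_3|^2>1+3t^2$ and the final quadratic check are routine.
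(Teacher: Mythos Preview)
Your proof is correct and takes a genuinely different route from the paper's.

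The paper computes the $x$-coordinate of $u_3$ explicitly,
\[
\alpha_3=\dfrac{t^4-t^2+(1-t)\sqrt{4t^3+t^2-2t+1}}{t^4+2t^3+2t^2-2t+1},
\]
shows $\alpha_3<1-t$, and then invokes the monotonicity of $\psi_P'$ (Lemma~\ref{fund-1-2}) to compare with the chord through $P$ and the auxiliary point $V=(1-t,0)$; the resulting ratio $|PV_2|/|PV_1|$ is computed in closed form and bounded by $\tfrac{7}{10}(1-t)$. Your argument bypasses all of this: the power-of-the-point identity $|Pu_3|\,|Pu_4|=1-t^2$ turns the ratio into $ (1-t^2)/|Pu_3|^2$, and the observation that $u_3$ lies on ray $RQ$ beyond $Q$ immediately gives $y_3<-t$, hence $|Pu_3|^2>1+3t^2$, after which only the elementary quadratic $21t^2-10t-3\ge 0$ remains. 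Your approach is shorter, avoids the explicit formula for $u_3$ and the appeal to Lemma~\ref{fund-1-2}, and makes transparent why the bound is linear in $1-t$; the paper's approach, on the other hand, fits the pattern used in the neighbouring Lemmas~\ref{Isosceles-2}--\ref{Isosceles-5}, where the monotonicity lemma is the common tool.
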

\begin{proof}
Let $u_3=(\alpha_3,\beta_3)$.
Then, we have
\begin{align*}
\alpha_3=\dfrac{t^4-t^2+(1-t)\sqrt{4t^3+t^2-2t+1}}{t^4+2t^3+2t^2-2t+1}.
\end{align*}
We see that $4t^3+t^2-2t+1< t^4+2t^3+2t^2-2t+1$ and 
$4t^3+t^2-2t+1>1$, which implies 
\begin{align*}
\alpha_3<\dfrac{(1-t)}{\sqrt{4t^3+t^2-2t+1}}\leq 1-t.
\end{align*}
We put $V=(1-t,0)$.
Let $V_1,V_2\in {S^1}$ be points at infinity, which are intersection points with the line $VP$ and ${S^1}$
such that $|V_1P|<|V_1V|$.
Then, we have
\begin{align*}
\dfrac{|Pu_4|}{|Pu_3|}< \dfrac{|PV_2|}{|PV_1|}=\dfrac{\sqrt{-t^4+2t^3+t^2-2t+1}-t^2}{\sqrt{-t^4+2t^3+t^2-2t+1}+t^2}.
\end{align*}
Using $\sqrt{-t^4+2t^3+t^2-2t+1}+t^2\geq \frac{10}{7}$ and $\sqrt{-t^4+2t^3+t^2-2t+1}-t^2\leq 1-t$,
we arrive at the desired conclusion.
\end{proof}

\begin{lem}\label{Isosceles-5}
Let $0.8<t<1$.
Then, we have:
\begin{enumerate}
\item[(1)]
$\dfrac{|Ru_5|}{|Ru_4|}< \dfrac{1}{t}$.
\item[(2)]
$\dfrac{|Qu_6|}{|Qu_5|}< \dfrac{1+t}{1-t}$.
\end{enumerate}

\end{lem}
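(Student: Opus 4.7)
The plan is to interpret each ratio as the value of $\psi_V'$ at a single point for the corresponding vertex $V$, and then apply Lemma \ref{fund-1-2} to bound this derivative by its global maximum on $S^1$ (attained at the intersection of the line $OV$ with $S^1$ closer to $V$). The argument parallels the one-line proof of Lemma \ref{Isosceles-3}.

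For Part (1), the orbit of $u_3$ under $\psi_{\triangle PQR}$ uses the vertex $P$ at $u_3$ and the vertex $R$ at $u_4$, as can be read off from Proposition \ref{fund-2} together with the CCW vertex order $P,R,Q$ of the triangle. Hence $\psi_R(u_4)=u_5$, and Lemma \ref{fund-1} gives $\frac{|Ru_5|}{|Ru_4|}=\psi_R'(u_4)$. The line $OR$ is the $x$-axis and meets $S^1$ at $A_1=(1,0)$ and $B_1=(-1,0)$, with $B_1$ the point closer to $R=\left(\frac{t-1}{t+1},0\right)$. Lemma \ref{fund-1-2} then gives that $\psi_R'$ is (strictly) maximized at $B_1$ with value $\frac{|RA_1|}{|B_1R|}=\frac{1}{t}$. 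The arc on which $R$ is the active vertex of $\psi_{\triangle PQR}$ lies entirely in the upper half $\{y>0\}$ of $S^1$ (its endpoints being the two breakpoints obtained from the edges $PR$ and $RQ$, both with positive $y$-coordinate), so $u_4$ lies in this arc and $u_4\ne B_1$. This yields $\frac{|Ru_5|}{|Ru_4|}<\frac{1}{t}$.

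Part (2) is analogous, with $Q$ replacing $R$. The active vertex at $u_5$ is $Q$, so $\psi_Q(u_5)=u_6$ and $\frac{|Qu_6|}{|Qu_5|}=\psi_Q'(u_5)$. The line $OQ$ is the $y$-axis, meeting $S^1$ at $(0,1)$ and $v_-:=(0,-1)$, with $v_-$ the point closer to $Q=(0,-t)$. By Lemma \ref{fund-1-2}, $\psi_Q'$ is strictly maximized at $v_-$ with value $\frac{|Q(0,1)|}{|v_-Q|}=\frac{1+t}{1-t}$. Now $v_-$ is itself a triangle breakpoint (the endpoint of the edge $PQ$ on $S^1$ closer to $Q$), and it is the excluded right endpoint of the half-open arc on which $Q$ is active; since $u_5$ belongs to this arc, $u_5\ne v_-$, whence $\frac{|Qu_6|}{|Qu_5|}<\frac{1+t}{1-t}$.

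The main obstacle is not any computation but ensuring the arc locations of $u_4$ and $u_5$ are correctly identified so that strictness holds — in particular that $u_4\ne B_1$. This can be made transparent either by computing coordinates of $u_4=\psi_P(u_3)$ explicitly from the formula for $u_3$ derived in Lemma \ref{Isosceles-4}, or, more cleanly, by writing down the CCW cyclic order of the breakpoints of $\psi_{\triangle PQR}$ on $S^1$ and comparing it to the orbit $u_1\to u_2\to\cdots\to u_6$.
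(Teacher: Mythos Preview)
Your proof is correct and follows essentially the same route as the paper: both bound the ratio $\frac{|Ru_5|}{|Ru_4|}$ (resp.\ $\frac{|Qu_6|}{|Qu_5|}$) by the maximum of $\psi_R'$ (resp.\ $\psi_Q'$) on $S^1$, which by Lemma~\ref{fund-1-2} is attained at $B_1$ (resp.\ $A_3$) and equals $\dfrac{|RA_1|}{|RB_1|}=\dfrac{1}{t}$ (resp.\ $\dfrac{|QA_4|}{|QA_3|}=\dfrac{1+t}{1-t}$). The paper's proof is the terse two-line version of your argument; your additional care in checking $u_4\ne B_1$ and $u_5\ne A_3$ for strictness is a welcome clarification that the paper leaves implicit.
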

\begin{proof}
For (1) we see
\begin{align*}
\dfrac{|Ru_5|}{|Ru_4|}<\dfrac{|RA_1|}{|RB_1|}=\dfrac{1}{t}.
\end{align*}
For (2) we see
\begin{align*}
\dfrac{|Qu_6|}{|Qu_5|}<\dfrac{|QA_4|}{|QA_3|}=\dfrac{1+t}{1-t}.
\end{align*}

\end{proof}

\begin{lem}\label{Isosceles-6}
Let $0.8<t<1$.
Then, $\psi'_P(u_1)\psi'_R(u_2)\psi'_P(u_3)\psi'_R(u_4)\psi'_Q(u_5)<1$
 holds.
\end{lem}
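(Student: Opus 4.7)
The plan is to turn the product of five derivatives into a product of five length ratios via Lemma \ref{fund-1}, replace each ratio by the bound supplied in Lemmas \ref{Isosceles-2}--\ref{Isosceles-5}, and then verify that the resulting rational function of $t$ is less than $1$ on $(0.8,1)$.

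First I would invoke Lemma \ref{fund-1}, together with the definition $u_{i+1}=\psi_{\triangle PQR}(u_i)$ and the identification of which vertex governs $\psi_{\triangle PQR}$ on each of the relevant arcs (by Proposition \ref{fund-2} and the configuration shown in Figure \ref{fig:pentagon8}), to write
\begin{align*}
\psi'_P(u_1)\psi'_R(u_2)\psi'_P(u_3)\psi'_R(u_4)\psi'_Q(u_5)
=\frac{|Pu_2|}{|Pu_1|}\cdot\frac{|Ru_3|}{|Ru_2|}\cdot\frac{|Pu_4|}{|Pu_3|}\cdot\frac{|Ru_5|}{|Ru_4|}\cdot\frac{|Qu_6|}{|Qu_5|}.
\end{align*}

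Next I would substitute the five bounds that have just been proved: Lemma \ref{Isosceles-2} gives $|Pu_2|/|Pu_1|<1/3$, Lemma \ref{Isosceles-3} gives $|Ru_3|/|Ru_2|<1/t$, Lemma \ref{Isosceles-4} gives $|Pu_4|/|Pu_3|\le \tfrac{7}{10}(1-t)$, and Lemma \ref{Isosceles-5} gives $|Ru_5|/|Ru_4|<1/t$ and $|Qu_6|/|Qu_5|<(1+t)/(1-t)$. Multiplying these bounds, the factors $(1-t)$ cancel and one obtains
\begin{align*}
\psi'_P(u_1)\psi'_R(u_2)\psi'_P(u_3)\psi'_R(u_4)\psi'_Q(u_5)<\frac{7(1+t)}{30\,t^{2}}.
\end{align*}

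Finally I would check that the right-hand side is less than $1$ for $0.8<t<1$. This is equivalent to $30t^{2}-7t-7>0$; the positive root of this quadratic is $(7+\sqrt{889})/60\approx 0.614$, which lies below $0.8$, and the quadratic is increasing on $(0.8,1)$, so the inequality is straightforward. There is no serious obstacle: the work has been front-loaded into Lemmas \ref{Isosceles-2}--\ref{Isosceles-5}, and the only mild care needed is to make sure the derivative identification in the first step matches the governing vertex at each $u_i$, which is the reason the indices $1,2,3,4,5$ are assigned to $P,R,P,R,Q$ respectively in the statement.
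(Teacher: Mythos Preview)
Your proof is correct and follows essentially the same approach as the paper: convert each derivative to a length ratio via Lemma \ref{fund-1}, bound the five ratios by Lemmas \ref{Isosceles-2}--\ref{Isosceles-5}, multiply to obtain $\frac{7(1+t)}{30t^2}$, and check this is below $1$ on $(0.8,1)$. Your write-up is in fact more explicit than the paper's, which simply states the product bound and declares the final inequality easy.
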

\begin{proof}
Based on Lemmas \ref{fund-1}, \ref{Isosceles-2}, \ref{Isosceles-3}, \ref{Isosceles-4}, \ref{Isosceles-5} and \ref{Isosceles-6},
 we conclude that  
\begin{align*}
\psi'_P(u_1)\psi'_R(u_2)\psi'_P(u_3)\psi'_R(u_4)\psi'_Q(u_5)<\dfrac{7(t+1)}{30t^2}.
\end{align*}
For $0.8<t<1$, it is easy to see that $\frac{7(t+1)}{30t^2}<1$.
\end{proof}

\begin{lem}\label{Isosceles-7}
Let $0.8<t<1$.
Let $v_1\in arc(A_1,u_1)$ and $v_{i+1}=\psi_{\triangle PQR}^{i}(v_1)$ for $1\leq i \leq 4$.
Then, $\psi'_P(v_1)\psi'_R(v_2)\psi'_P(v_3)\psi'_R(v_4)\psi'_Q(v_5)<1$
 holds.
\end{lem}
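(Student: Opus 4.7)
The plan is to deduce Lemma \ref{Isosceles-7} from Lemma \ref{Isosceles-6} by a termwise monotonicity comparison. Writing $(X_1,X_2,X_3,X_4,X_5)=(P,R,P,R,Q)$ for the pivots appearing in the product of Lemma \ref{Isosceles-6}, I aim to show $\psi'_{X_i}(v_i) < \psi'_{X_i}(u_i)$ for each $i$; multiplying the five strict inequalities and invoking Lemma \ref{Isosceles-6} then yields the desired bound.

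First, since $\psi_{\triangle PQR}$ is an orientation-preserving homeomorphism of $S^1$ (Lemma \ref{fund-4}(1)) and the identities $\psi_{\triangle PQR}(A_i)=A_{i+1}$ and $\psi_{\triangle PQR}(u_i)=u_{i+1}$ hold for $1\le i\le 4$, the hypothesis $v_1 \in arc(A_1,u_1)$ forces $v_i \in arc(A_i,u_i)$ for $i=1,\ldots,5$ by preservation of cyclic order. Second, I would verify that the active vertex of $\psi_{\triangle PQR}$ (in the sense of Proposition \ref{fund-2}) remains equal to $X_i$ throughout the open arc $arc(A_i,u_i)$, so that $\psi_{\triangle PQR}$ coincides with $\psi_{X_i}$ there and the derivative formula of Lemma \ref{fund-1} applies. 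For a triangle, only three points of $S^1$ are pivot-change boundaries, namely the intersections of the three extended edges with $S^1$ that are nearer to the ``first'' endpoint of each edge in the counterclockwise vertex order $P\to R\to Q$; using the explicit coordinates of $A_1,\ldots,A_5$ and the definition of $u_2,u_3$ as the two intersections of line $QR$ with $S^1$, one checks that none of these boundaries falls strictly inside any $arc(A_i,u_i)$.

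Third, I apply Lemma \ref{fund-1-2}: for each vertex $X\ne O$, $\psi'_X$ is monotonically increasing on the counterclockwise arc from the far intersection of line $OX$ with $S^1$ to the near intersection. Since $P=(0,t)$ and $Q=(0,-t)$ lie on the $y$-axis and $R=((t-1)/(t+1),0)$ lies on the negative $x$-axis (as $t<1$), the three increasing arcs are the right half-plane $\{x>0\}$ for $P$, the left half-plane $\{x<0\}$ for $Q$, and the upper half-plane $\{y>0\}$ for $R$. A direct check using the explicit coordinates confirms that $arc(A_i,u_i)$ lies in the increasing arc for $X_i$ in all five cases: $arc(A_1,u_1)$ and $arc(A_3,u_3)$ sit just counterclockwise of $(1,0)$ and $(0,-1)$ inside $\{x>0\}$; $arc(A_2,u_2)$ and $arc(A_4,u_4)$ sit inside $\{y>0\}$; and $arc(A_5,u_5)$ sits inside $\{x<0\}$. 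Since $v_i$ precedes $u_i$ strictly counterclockwise within each arc, monotonicity gives $\psi'_{X_i}(v_i) < \psi'_{X_i}(u_i)$.

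The main obstacle is the geometric bookkeeping in the second step, namely confirming that the pivot assignment $(P,R,P,R,Q)$ remains valid throughout the perturbation $v_1\in arc(A_1,u_1)$; this reduces to locating the three pivot-change boundaries with respect to the combined configuration $\{A_i\}\cup\{u_i\}$. Once that is settled, the third step follows immediately from Lemma \ref{fund-1-2}, and the final product inequality is obtained by multiplying the five strict inequalities and applying Lemma \ref{Isosceles-6}.
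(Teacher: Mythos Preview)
Your proposal is correct and follows essentially the same route as the paper: observe that $v_i\in arc(A_i,u_i)$ for each $i$, apply the monotonicity of $\psi'_{X_i}$ from Lemma~\ref{fund-1-2} on the appropriate half-arc to get $\psi'_{X_i}(v_i)<\psi'_{X_i}(u_i)$, and then multiply and invoke Lemma~\ref{Isosceles-6}. Your write-up is simply more explicit than the paper's about the pivot bookkeeping and the orientation of the increasing arcs, which is fine.
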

\begin{proof}
We observe that $v_i$ lies on the  $arc(A_i, u_i)$ for $1 \leq i \leq 5$. Using Lemma \ref{fund-1-2}, we can see that for $1 \leq i \leq 5$, $\psi'{\triangle PQR}(u_i) > \psi'{\triangle PQR}(v_i)$. This inequality, together with Lemma \ref{Isosceles-6}, implies Lemma \ref{Isosceles-7}.
\end{proof}

\begin{lem}\label{Isosceles-8}
Let $0.8<t<1$.
Then, $u_6\in arc(A_1,u_1)$. 
\end{lem}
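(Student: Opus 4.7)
My plan is to combine the derivative bound of Lemma \ref{Isosceles-7} with the fact that $A_1$ is a fixed point of $\psi_{\triangle PQR}^5$, via the mean value theorem applied in the lift. Because $\psi_{\triangle PQR}(A_i)=A_{i+1\bmod 5}$ by the construction of the pentagram orbit, $A_1$ is fixed by $\psi_{\triangle PQR}^5$; and since $\rho(\psi_{\triangle PQR})=2/5$, any lift $a$ of $A_1$ satisfies $\overline{\psi_{\triangle PQR}}^5(a)=a+2$.

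The key input is the following region-tracking observation: for every $v_1\in arc(A_1,u_1)$, the iterates $v_i=\psi_{\triangle PQR}^{i-1}(v_1)$ for $i=1,\dots,5$ lie strictly inside the open arcs $arc(A_i,u_i)$ (for $i\geq 2$) and strictly inside the tangent-vertex regions of $\psi_{\triangle PQR}$ associated respectively with $P,R,P,R,Q$. Consequently $\psi_{\triangle PQR}^5$ agrees on $arc(A_1,u_1)$ with the $C^1$ composition $\psi_Q\circ \psi_R\circ \psi_P\circ \psi_R\circ \psi_P$, whose derivative at $v_1$ is precisely the product $\psi_P'(v_1)\psi_R'(v_2)\psi_P'(v_3)\psi_R'(v_4)\psi_Q'(v_5)$ that Lemma \ref{Isosceles-7} bounds strictly below $1$.

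Let $a$ and $b$ be lifts of $A_1$ and $u_1$ with $a<b$. Applying the mean value theorem to $\overline{\psi_{\triangle PQR}}^5$ on $[a,b]$ produces some $\xi\in arc(A_1,u_1)$ with $\overline{\psi_{\triangle PQR}}^5(b)-(a+2)=(\psi_{\triangle PQR}^5)'(\xi)(b-a)<b-a$. Setting $b':=\overline{\psi_{\triangle PQR}}^5(b)-2$, this exhibits $b'$ as a lift of $u_6$ satisfying $b'<b$, together with $b'>a$ by strict monotonicity of $\overline{\psi_{\triangle PQR}}^5$ and $u_1\ne A_1$. Hence $u_6\in arc(A_1,u_1)$.

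The main obstacle is the region-tracking step: one must verify that each intermediate iterate $v_i$ sits strictly inside the indicated tangent-vertex region, avoiding the three corner points of $\psi_{\triangle PQR}$ described in Proposition \ref{fund-2}, so that the composition genuinely equals $\psi_{\triangle PQR}^5$ as a $C^1$ map on $arc(A_1,u_1)$. This reduces to comparing the relative cyclic order on $S^1$ of the $A_i$'s, the $u_i$'s, and those three corner points, and the hypothesis $0.8<t<1$ inherited from Lemma \ref{Isosceles-7} is precisely what makes this comparison go through; once the bookkeeping is settled, the mean value step closes the proof cleanly.
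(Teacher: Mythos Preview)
Your proof is correct and follows essentially the same route as the paper: both arguments rest on (i) the identification $\psi_{\triangle PQR}^5=\psi_Q\circ\psi_R\circ\psi_P\circ\psi_R\circ\psi_P$ on $arc(A_1,u_1)$, (ii) the derivative bound of Lemma~\ref{Isosceles-7}, (iii) the fact that $A_1$ is a fixed point of $\psi_{\triangle PQR}^5$, and (iv) the mean value theorem in the lift. The only cosmetic difference is that the paper phrases the MVT step as a contradiction (assume $u_6\in arc[u_1,A_4)$ and produce a point with derivative $\ge 1$), whereas you apply MVT directly to obtain both inequalities $a<b'<b$; your direct version also absorbs the paper's preliminary observation ``$u_6\in arc(A_1,A_4)$'' into the monotonicity bound $b'>a$.
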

\begin{proof}
It is not difficult to see that $u_6\in arc(A_1,A_4)$.
We suppose that $u_6\in arc[u_1,A_4)$.
Since for $v\in arc(A_1,u_1)$ $\psi_{\triangle PQR}^5(v)=\psi_Q\circ\psi_R\circ\psi_P\circ\psi_R\circ \psi_P(v)$
,  $\psi_{\triangle PQR}^5$  is differentiable on $arc(A_1,u_1)$.
As $\psi_{\triangle PQR}^5(A_1) = A_1$ and $\psi_{\triangle PQR}^5(u_1) \in arc[u_1, A_4)$, by the mean value theorem,
 there exists some $v \in arc(A_1, u_1)$ such that $(\psi_{\triangle PQR}^5)'(v) \geq 1$. However, 
 this contradicts Lemma \ref{Isosceles-7}. Hence, we can conclude that $u_6 \in arc(A_1, u_1)$.
\end{proof}

Suppose there exists $v\in S^1$ such that $\psi_{\triangle PQR}^5(v) = v$. 
Let $\mathcal{P}(v)$ denote the pentagram formed by the edges 
$\psi_{\triangle PQR}^{i-1}(v)\psi_{\triangle PQR}^{i}(v)$ for $1\leq i \leq 5$.
Let $C\in D$. We use $E(\mathcal{P}(v),C)$ to denote the number of edges of $\mathcal{P}(v)$ that include $C$.
We note that since two consecutive edges of $\mathcal{P}(v)$ do not share a vertex of $\triangle PQR$,
we have $E(\mathcal{P}(v),C)\leq 2$ for all $C\in \{P,Q,R\}$.

\begin{lem}\label{Isosceles-9}
Let $0.8<t<1$.
Then, there does not exist $v\in S^1$  with  $\psi_{\triangle PQR}^5(v)=v$
such that $E(\mathcal{P}(v),P)=2, E(\mathcal{P}(v),Q)=1$, and
$E(\mathcal{P}(v),R)=2$. 
\end{lem}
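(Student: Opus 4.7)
The plan is to argue by contradiction. First I will pin the iterates $v_i := \psi_{\triangle PQR}^{i-1}(v)$ to specific sub-arcs of $S^1$ using the decomposition of Proposition \ref{fund-2}, and then invoke the contraction estimates from Lemmas \ref{Isosceles-7} and \ref{Isosceles-8} to rule out a fifth-iterate fixed point in the relevant sub-arc.

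Suppose such a $v$ exists. Since $E(\mathcal{P}(v),P)+E(\mathcal{P}(v),Q)+E(\mathcal{P}(v),R)=5$ equals the number of edges, every edge of $\mathcal{P}(v)$ must contain exactly one vertex of $\triangle PQR$ --- otherwise some edge would coincide with an edge of $\triangle PQR$ and push the sum to at least $6$. Combined with the remark preceding the lemma that consecutive edges never share a tangent vertex, the cyclic sequence of tangent vertices on $\mathcal{P}(v)$ is, up to cyclic rotation, either $(P,R,P,R,Q)$ or $(P,R,P,Q,R)$. Choosing the cyclic starting index so that the first three tangents are $P,R,P$ (achievable in either case), I have $v_1\in I_P$, $v_2=\psi_P(v_1)\in I_R$, and $v_3=\psi_R(v_2)\in I_P$, where $I_P,I_R,I_Q$ are the arcs on which $\psi_{\triangle PQR}$ restricts to $\psi_P,\psi_R,\psi_Q$. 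Using the CCW-monotonicity of $\psi_P,\psi_R$ together with the identifications $\psi_P(A_1)=A_2$, $\psi_P(u_1)=u_2$, $\psi_R(A_2)=A_3$, $\psi_R(u_2)=u_3$, the requirement that no edge be degenerate forces
\[ v_1\in arc(A_1,u_1),\quad v_2\in arc(A_2,u_2),\quad v_3\in arc(A_3,u_3). \]

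Next I would eliminate the pattern $(P,R,P,Q,R)$: since $v_3\in arc(A_3,u_3)$, its image $v_4=\psi_P(v_3)$ lies in $\psi_P(arc(A_3,u_3))$, a short arc adjacent to $A_4=\psi_P(A_3)$ which sits entirely inside $I_R$ and does not reach across the boundary separating $I_R$ from $I_Q$. Hence the fourth tangent must be $R$, not $Q$, leaving only pattern $(P,R,P,R,Q)$. For this pattern set $\Phi := \psi_Q\circ\psi_R\circ\psi_P\circ\psi_R\circ\psi_P$, so that $\Phi$ coincides with $\psi_{\triangle PQR}^5$ on $arc[A_1,u_1]$. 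A direct computation shows $\Phi(A_1)=A_1$ (the degenerate orbit $A_1,\ldots,A_5$ sits at this endpoint), and Lemma \ref{Isosceles-8} gives $\Phi(u_1)=u_6\in arc(A_1,u_1)$. Lemma \ref{Isosceles-7} provides $\Phi'(v)<1$ throughout $arc(A_1,u_1)$. Since $\Phi$ is continuous and strictly monotone, the mean value theorem yields $\Phi(v)\in arc(A_1,v)$ for every $v\in arc(A_1,u_1]$; in particular $\Phi$ has no fixed point in $arc(A_1,u_1)$, which contradicts $\psi_{\triangle PQR}^5(v)=v$.

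The hardest step is the first one: the careful verification of the arc-locations of $v_1,v_2,v_3$, which requires identifying the arc endpoints of Proposition \ref{fund-2} with the special points $A_i$ and $u_i$ defined in the setup preceding Lemma \ref{Isosceles-2} and tracking the behavior of $\psi_P$ and $\psi_R$ at those endpoints, including the degenerate cases where an edge of the pentagram coincides with an edge of $\triangle PQR$. Once the arc-location is pinned down, the exclusion of pattern $(P,R,P,Q,R)$ is a short geometric check, and the contraction estimates from Lemmas \ref{Isosceles-7}--\ref{Isosceles-8} close the case.
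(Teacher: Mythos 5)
Your proof is correct and follows essentially the same route as the paper: both arguments pin the orbit down (after a cyclic relabeling) to $v_1\in arc(A_1,u_1)$ by tracking which of the arcs $I_P,I_R,I_Q$ the iterates visit, and then derive the contradiction from $\psi_{\triangle PQR}^5(A_1)=A_1$, the mean value theorem, and the derivative bound of Lemma \ref{Isosceles-7}. The only cosmetic differences are that you organize the case analysis by enumerating the two admissible cyclic tangent-vertex patterns rather than by anchoring on the unique edge through $Q$, and you invoke Lemma \ref{Isosceles-8}, which is not actually needed once a fixed point in $arc(A_1,u_1)$ is assumed.
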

\begin{proof}
We suppose that there exists $v\in S^1$  with  $\psi_{\triangle PQR}^5(v)=v$
such that $E(\mathcal{P}(v),P)=2, E(\mathcal{P}(v),Q)=1$, and
$E(\mathcal{P}(v),R)=2$.
We put $v_i=\psi_{\triangle PQR}^{i-1}(v)$ for $1\leq i \leq 5$.
We also suppose that $Q$ is on the edge $v_5v_1$ without loss of generality.
Since $E(\mathcal{P}(A_1),Q)=2$ holds, $v_1\ne A_i$ for all $i$.
Therefore, $v_1\in arc(A_3,A_1)$ or $v_1\in arc(A_1,A_4)$.
First, we suppose that $v_1\in arc(A_3,A_1)$, which implies
$v_3\in arc(A_5,A_3)$. 
Then, $Q$ is on the edge $v_3v_4$, which contradicts $E(\mathcal{P}(v),Q)=1$.
Therefore, we have $v_1\in arc(A_1,A_4)$.
We suppose that $v_1\in arc[u_1,A_4)$.
Then, we have $v_2\in arc[u_2,A_5)$, which implies 
that  $Q$ is on the edge $v_3v_4$. This is a contradiction.
Therefore, we have $v_1\in arc(A_1,u_1)$.
Since $\psi_{\triangle PQR}^5(A_1)=A_1$ and $\psi_{\triangle PQR}^5(v_1)=v_1$,
by the mean value theorem there exists $v'\in arc(A_1,v_1)$ such that
$(\psi_{\triangle PQR}^5)'(v')=1$, which contradicts Lemma \ref{Isosceles-7}.
\end{proof}

Similarly to Lemma \ref{Isosceles-9}, we have following Lemma.

\begin{lem}\label{Isosceles-9-2}
Let $0.8<t<1$.
Then, there does not exist $v\in S^1$  with  $\psi_{\triangle PQR}^5(v)=v$
such that $E(\mathcal{P}(v),P)=1, E(\mathcal{P}(v),Q)=2$, and
$E(\mathcal{P}(v),R)=2$. 
\end{lem}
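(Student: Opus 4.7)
The plan is to reduce to Lemma \ref{Isosceles-9} via the reflection symmetry of the triangle about the $x$-axis. Let $\sigma:(x,y)\mapsto(x,-y)$, which is an isometry of $D$ preserving $\triangle PQR$ setwise and exchanging $P$ and $Q$ while fixing $R$, since $P=(0,t)$, $Q=(0,-t)$, and $R=((t-1)/(t+1),0)$. Because $\sigma$ reverses the orientation of $S^1$, the conjugation relation $\sigma\circ\psi_{\triangle PQR}=\psi_{\triangle PQR}^{-1}\circ\sigma$ holds, so $\sigma$ maps $5$-periodic points of $\psi_{\triangle PQR}$ to $5$-periodic points.

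Supposing for contradiction that $v$ satisfies the hypotheses, I would set $v':=\sigma(v)$ and write $v_i=\psi_{\triangle PQR}^{i-1}(v)$, $v'_i=\psi_{\triangle PQR}^{i-1}(v')$. A short induction using the conjugation relation gives $v'_i=\sigma(v_{2-i\bmod 5})$, so the orbit of $v'$ consists of the $\sigma$-images of the $v_i$ traversed in the reverse cyclic order on $S^1$. Consequently $\mathcal{P}(v')=\sigma(\mathcal{P}(v))$ as unordered sets of five chords. Since $\sigma$ is a bijection mapping lines to lines and $\sigma(P)=Q$, $\sigma(Q)=P$, $\sigma(R)=R$, the edge counts obey
\begin{align*}
E(\mathcal{P}(v'),P)&=E(\mathcal{P}(v),Q)=2,\\
E(\mathcal{P}(v'),Q)&=E(\mathcal{P}(v),P)=1,\\
E(\mathcal{P}(v'),R)&=E(\mathcal{P}(v),R)=2,
\end{align*}
so $v'$ satisfies the hypotheses of Lemma \ref{Isosceles-9}, contradicting it.

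The subtlety to watch is the identification $\mathcal{P}(v')=\sigma(\mathcal{P}(v))$: even though the reflection reverses the dynamical order of the orbit, the pentagram depends only on the cyclic arrangement of the five periodic points on $S^1$, and the five chords joining every other point are unchanged up to relabeling. Once this is verified, no new estimates beyond those already in the proof of Lemma \ref{Isosceles-9} are required. A more laborious alternative plan would be to rerun the argument of Lemma \ref{Isosceles-9} with $P$ and $Q$ interchanged and the distinguished edge through $P$ taken to be $v_1v_2$ rather than $v_5v_1$; this would force us to reprove analogues of Lemmas \ref{Isosceles-6} and \ref{Isosceles-7} for the mirror orbit, which the symmetry reduction avoids.
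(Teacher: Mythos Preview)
Your proof is correct. The paper gives no explicit argument for this lemma, stating only ``Similarly to Lemma~\ref{Isosceles-9}, we have following Lemma.'' Your reflection argument via $\sigma:(x,y)\mapsto(x,-y)$ is a clean and legitimate way to realize that ``similarly'': the conjugation $\sigma\circ\psi_{\triangle PQR}=\psi_{\triangle PQR}^{-1}\circ\sigma$ is exactly the relation used implicitly in the paper's proof of Lemma~\ref{Isosceles-11} (where the reflected orbit $w_i$ is introduced), and your verification that $\mathcal{P}(v')=\sigma(\mathcal{P}(v))$ as an unordered set of chords, hence $E(\mathcal{P}(v'),P)=E(\mathcal{P}(v),Q)$ etc., is straightforward and correct.

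The alternative you sketch in your final paragraph---rerunning the proof of Lemma~\ref{Isosceles-9} with $P$ and $Q$ interchanged---is the other plausible reading of the paper's ``similarly''. That route would require mirror versions of Lemmas~\ref{Isosceles-2}--\ref{Isosceles-7} for the orbit passing through $Q,R,Q,R,P$ instead of $P,R,P,R,Q$; but those mirror estimates are themselves consequences of the same reflection $\sigma$, so the two readings collapse to the same idea. Your version has the advantage of making the reduction explicit and avoiding any duplicated computation.
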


\begin{lem}\label{Isosceles-10}
Let $0.8<t<1$.
If $\psi_{\triangle PQR}^5(v)=v$ and $\sum_{S\in \{P,Q,R\}}E(\mathcal{P}(v),S)>5$,
there exists $i$ with $1\leq i \leq 5$ such that $v=A_i$.
\end{lem}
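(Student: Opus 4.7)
The approach rests on the observation that each edge $v_iv_{i+1}$ of the pentagram $\mathcal{P}(v)$ passes through at least one vertex of $\triangle PQR$, by Proposition~\ref{fund-2}. Since $\mathcal{P}(v)$ has exactly five edges, the hypothesis $\sum_{S \in \{P,Q,R\}} E(\mathcal{P}(v),S) > 5$ forces at least one edge to contain two vertices of $\triangle PQR$, and hence to lie on the line through one of its three sides. I would then split into cases according to which side ($PQ$, $QR$, or $PR$) carries such an edge, and show that only the $PQ$ case is consistent with $v$ being a period-$5$ point, whereupon $v=A_j$ for some $j$.

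First I would dispose of the side $PQ$. This line is the $y$-axis and meets $S^1$ exactly at $A_4=(0,1)$ and $A_3=(0,-1)$, so the endpoints of the offending edge are $A_3$ and $A_4$. Hence $A_3$ lies in the orbit of $v$; since the $\psi_{\triangle PQR}$-orbit of $A_3$ is the full cycle $\{A_1,\dots,A_5\}$ by the construction preceding Lemma~\ref{Isosceles-2}, we conclude $v=A_j$ for some $1\le j\le 5$, as required.

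Next I would rule out the side $QR$. Its line meets $S^1$ precisely at $u_2$ and $u_3$, so the period-$5$ orbit of $v$ contains $u_2$, forcing $\psi_{\triangle PQR}^5(u_2)=u_2$. However, Lemma~\ref{Isosceles-8} gives $u_6=\psi_{\triangle PQR}^5(u_1)\in arc(A_1,u_1)$, so $u_6\neq u_1$, i.e.\ $u_1$ is not period-$5$; because $\psi_{\triangle PQR}$ is a bijection and $u_2=\psi_{\triangle PQR}(u_1)$, the same holds of $u_2$, a contradiction. For the side $PR$ I would invoke the reflection $g(x,y):=(x,-y)$, a Euclidean (hence Beltrami--Klein) isometry of $D$ preserving $\triangle PQR$, swapping $P$ and $Q$ while fixing $R$. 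Since $g$ reverses the orientation of $S^1$, one checks $g\circ \psi_{\triangle PQR}=\psi_{\triangle PQR}^{-1}\circ g$, so the set of period-$5$ points of $\psi_{\triangle PQR}$ is $g$-invariant. The two intersections of line $PR$ with $S^1$ are $g(u_2)$ and $g(u_3)$, and the same argument applied to $u_2$ (and separately to $u_3=\psi_{\triangle PQR}(u_2)$) shows neither is period-$5$; hence neither $g(u_2)$ nor $g(u_3)$ is period-$5$, a contradiction.

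The main obstacle I expect is the $PR$ case: one must carefully set up the orientation-reversing reflection $g$, verify the conjugation $g\circ \psi_{\triangle PQR}\circ g^{-1}=\psi_{\triangle PQR}^{-1}$, and only then transfer the non-periodicity of $u_2,u_3$ to their $g$-images on line $PR$. A glib ``by symmetry'' invocation would be misleading here, since $\psi_{\triangle PQR}$ itself is not $g$-invariant --- only the set of period-$n$ points is.
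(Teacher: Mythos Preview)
Your proof is correct and follows the same approach as the paper: both argue that the hypothesis forces some edge of $\mathcal{P}(v)$ to contain two vertices of $\triangle PQR$, then split into the three cases $PQ$, $QR$, $PR$, with the $PQ$ case yielding $v=A_j$ and the other two cases contradicting Lemma~\ref{Isosceles-8}. The only difference is cosmetic: the paper disposes of the $PR$ case with a bare ``similarly,'' whereas you spell out the orientation-reversing reflection $g$ and the conjugation $g\circ\psi_{\triangle PQR}=\psi_{\triangle PQR}^{-1}\circ g$ that justifies it --- a welcome addition, since (as you note) $\psi_{\triangle PQR}$ is not itself $g$-equivariant.
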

\begin{proof}
We suppose that $\psi_{\triangle PQR}^5(v)=v$ and $\sum_{S\in \{P,Q,R\}}E(\mathcal{P}(v),S)>5$.
Then, there exists an edge of $\mathcal{P}(v)$ which includes two vertices of $\triangle PQR$.
If $P$ and $Q$ are included in an edge of $\mathcal{P}(v)$, then it is easy to see that
$v= A_i$ for some $i$ with $1\leq i \leq 5$. 
If $R$ and $Q$ are included in an edge of $\mathcal{P}(v)$, then $u_1$ is a vertex of $\mathcal{P}(v)$. This contradicts Lemma \ref{Isosceles-8}, which states that $\psi_{\triangle PQR}^5(u_1)\ne u_1$.
If $P$ and $R$ are included in an edge of $\mathcal{P}(v)$, we have a contradiction similarly as in the previous case.
\end{proof}

\begin{figure}
    \centering
\begin{tikzpicture}
\draw(0,0) circle (2);

\draw(0,0) circle (2);
\draw (2,0)--(-1.2,1.6)--(0,-2)--(0,2)--(-1.2,-1.6)--cycle;
\draw (-2,0)--(2,0);
\draw (0,-2)--(0,2);
\draw[dashed] (1.89525,0.6387)--(-1.5274,1.291);
\draw[dashed] (0.60433,-1.9065)--(-1.5274,1.291);
\draw[dashed] (0.60433,-1.9065)--(-0.2057203,1.9893916);
\draw[dashed]  (-1.059678,-1.6961959)--(-0.2057203,1.9893916);
\draw[dashed]  (-1.059678,-1.6961959)--(1.9774947,0.2991895871);
\draw[dash dot] (0.60433,1.9065)--(-0.666,0);

\draw (-0.1,0.2)node[left]{$R$};
\draw (-0.05,0.17)node[right]{$O$};
\draw (0,1.2)node[right]{$P$};
\draw (0,-1.2)node[right]{$Q$};
\draw (-1.5,-1.6)node[below]{$A_5$};
\draw (0,2.0)node[above]{$A_4$};
\draw (0,-2.0)node[below]{$A_3$};
\draw (-1.2,1.6)node[above]{$A_2$};
\draw (2,0)node[right]{$A_1$};
\draw (-2,0)node[left]{$B_1$};

\draw (1.89525,0.6387)node[right]{$u_1$};
\draw (-1.5274,1.291)node[left]{$u_2$};
\draw (0.60433,-1.9065)node[below]{$u_3$};
\draw (-0.3557203,1.9893916)node[above]{$u_4$};
\draw (-1.059678,-1.6961959)node[below]{$u_5$};
\draw (1.9774947,0.2991895871)node[right]{$u_6$};
\draw (0.60433,1.9065)node[above]{$w$};

\coordinate (P) at (0,1);
\fill (P) circle [radius=1.5pt];
\coordinate (Q) at (0,-1);
\fill (Q) circle [radius=1.5pt];
\coordinate (R) at (-0.666,0);
\fill (R) circle [radius=1.5pt];

 \end{tikzpicture}
    \caption{Point $w$. }
    \label{fig:pentagon9}
\end{figure}

\begin{lem}\label{Isosceles-11}
Let $0.8<t<1$.
Then, there does not exist $v\in S^1$  with  $\psi_{\triangle PQR}^5(v)=v$
such that $E(\mathcal{P}(v),P)=2, E(\mathcal{P}(v),Q)=2$, and
$E(\mathcal{P}(v),R)=1$. 
\end{lem}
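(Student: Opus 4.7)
The plan is to argue by contradiction, following the template of the proof of Lemma~\ref{Isosceles-9}. Assume such a $v$ exists and set $v_i=\psi_{\triangle PQR}^{i-1}(v)$ for $1\le i\le 5$. By Lemma~\ref{Isosceles-10}, and since both $A_2A_3$ and $A_4A_5$ pass through $R$ in Figure~\ref{fig:pentagon8}, we have $E(\mathcal{P}(A_j),R)=2\ne 1$, so $v\ne A_j$ for every $j$. Without loss of generality, $R$ lies on the unique $R$-edge $v_5v_1$. Two consecutive edges of the pentagram cannot share a triangle vertex, or else $\psi_{\triangle PQR}^{2}(v_i)=v_i$, contradicting $\rho(\psi_{\triangle PQR})=2/5$. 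Hence the cyclic coloring of the five edges by triangle vertices, with letter counts $(P^2,Q^2,R^1)$ and no two adjacent letters equal, is forced to be either $(P,Q,P,Q,R)$ or its $P\leftrightarrow Q$ swap $(Q,P,Q,P,R)$. These two are related by the reflection of $D$ across the $x$-axis, which preserves $\triangle PQR$, exchanges $P$ and $Q$, and conjugates $\psi_{\triangle PQR}$ with its inverse; it suffices to treat the $(P,Q,P,Q,R)$ coloring.

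I would then run the arc analysis in parallel with Lemma~\ref{Isosceles-9}. Since $v_1v_2$ is a $P$-tangent, $v_1$ lies in the arc on which $\psi_{\triangle PQR}=\psi_P$, containing the fixed point $A_1$. I split into $v_1\in arc(A_3,A_1)$ and $v_1\in arc(A_1,A_4)$, subdividing the latter at $u_1$. For $v_1\in arc(A_3,A_1)$, as $v_1$ moves clockwise from $A_1$ toward $A_3$, $v_2=\psi_P(v_1)$ traces $arc(A_4,A_2)$ which is inside the $R$-tangent arc, forcing $v_2v_3$ to pass through $R$ and contradicting $E(\mathcal{P}(v),R)=1$. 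For $v_1\in arc(A_1,u_1)$, using $\psi_P(A_1)=A_2$ and $\psi_P(u_1)=u_2$, $v_2$ traces $arc(A_2,u_2)$ which again lies inside the $R$-tangent arc, producing the same contradiction.

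The substantive case is $v_1\in arc[u_1,A_4)$ with $v_1$ still in the $P$-tangent arc. Here $v_2$ lies in the $Q$-tangent arc, $v_3=\psi_Q(v_2)$ lies in the $P$-tangent arc, and I would propagate one further step to $v_4=\psi_P(v_3)$, which lies in an arc with $u_4$ as one endpoint and $\psi_P\circ\psi_Q$ applied to the intersection of line $PR$ with $S^1$ past $R$ as the other. The contradiction hinges on showing that this second endpoint lies clockwise of $u_2$; then the entire arc containing $v_4$ sits inside the $R$-tangent arc, forcing $v_4v_5$ to pass through $R$ and contradicting both the $Q$-coloring assigned to $v_4v_5$ by the pattern and $E(\mathcal{P}(v),R)=1$.

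The main obstacle is this final geometric estimate: controlling the composition $\psi_P\circ\psi_Q$ at the specified boundary point and showing its image remains clockwise of $u_2$ uniformly for $t\in(0.8,1)$. I expect this to reduce, as in the proofs of Lemmas~\ref{Isosceles-2}--\ref{Isosceles-5}, to a polynomial inequality in $t$ verifiable by direct coordinate computation in SageMath analogous to Lemma~\ref{pentagon-4}.
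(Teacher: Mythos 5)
Your setup (contradiction, exclusion of the $A_i$, the forced cyclic coloring $(P,Q,P,Q,R)$ up to the $x$-axis reflection, and the elimination of the easy arcs by propagation) matches the paper's opening moves. The genuine gap is in what you call the substantive case. You hope to close it by showing that the arc containing $v_4$ (after propagating through $\psi_P\circ\psi_Q\circ\dots$) sits entirely inside the $R$-tangent arc, so that an extra $R$-tangency is forced, reducible to a polynomial inequality in $t$. This cannot work: the paper's own analysis shows that after all arc-based eliminations there remains a nonempty window ($v_3\in arc(u_1,w)$, where $w$ is the far intersection of line $RP$ with $S^1$) in which the pattern $(2,2,1)$ is fully consistent with the arc structure. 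Concretely, the paper exhibits the closed pentagon with $v_4=B_1=(-1,0)$, whose five edges realize exactly the tangency pattern $E(P)=2$, $E(Q)=2$, $E(R')=1$ for a point $R'=\bigl(-\tfrac{(t^2-1)^2}{t^4+6t^2+1},0\bigr)$ on the negative $x$-axis (this is essentially the genuine pentagram of Lemma~\ref{pentagon-4} at the $\Delta_2'$ height). Any argument that uses only which coarse arcs the $v_i$ lie in, and not the precise coordinate of $R$, would also ``rule out'' this realizable configuration, so it must be invalid; the contradiction in Lemma~\ref{Isosceles-11} ultimately has to come from the fact that $R=\bigl(\tfrac{t-1}{t+1},0\bigr)\neq\bigl(-\tfrac{(t^2-1)^2}{t^4+6t^2+1},0\bigr)$ for $0.8<t<1$.

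What the paper actually does in that window is much heavier: it uses the symmetry of $\triangle PQR$ about the $x$-axis to produce a second period-$5$ point $w_5$ in the same arc with the same profile, proves via the chain rule and Lemma~\ref{fund-1-2} that $(\psi_{\triangle PQR}^5)'$ is monotonically increasing on $arc(u_1,w)$, and applies the mean value theorem twice (once between $w_5$ and $v_3$, once using Lemma~\ref{Isosceles-8} on $arc(u_1,w_5)$) to force $v_3=w_5$. That pins $v_4$ to the $x$-axis, hence $v_4=B_1$, and the explicit coordinate computation then contradicts the position of $R$. Your proposal is missing all of this: the symmetry argument, the derivative monotonicity, the double mean-value-theorem step, and the final coordinate contradiction. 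A secondary, smaller issue: your justification that consecutive edges cannot share a triangle vertex (``or else $\psi^2(v_i)=v_i$'') is garbled — the correct reason is that two distinct chords through $v_i$ and the same vertex coincide, forcing $v_{i+1}=v_{i-1}$ and hence period at most $2$.
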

\begin{proof}
We suppose that there exists $v\in S^1$  with  $\psi_{\triangle PQR}^5(v)=v$
such that $E(\mathcal{P}(v),P)=2, E(\mathcal{P}(v),Q)=2$, and
$E(\mathcal{P}(v),R)=1$.
We put $v_i=\psi_{\triangle PQR}^{i-1}(v)$ for $1\leq i \leq 5$.
We suppose that $R$ is on the edge $v_1v_2$ without loss of generality.
We also suppose that $P$ is on the edge $v_2v_3$.
Then, we see that $v_2\in arc[A_3,u_3]$, which implies that 
$v_3\in arc[A_4,u_4]$.
Then, wee that $R$ is on the edge $v_3v_4$, which contradicts $E(\mathcal{P}(v),R)=1$.
Therefore,  $Q$ is on the edge $v_2v_3$.
The fact that $E(\mathcal{P}(v),R)=1$ implies that
$P$ is on the edge $v_3v_4$, $Q$ is on the edge $v_4v_5$, and 
$P$ is on the edge $v_5v_1$. 
Let $w\in {S^1}$ be a point at infinity, which is  an intersection point with the line $RP$ and ${S^1}$
as shown in Figure \ref{fig:pentagon9}.
it is easy to see that $v_3$ is on $arc(A_3,A_4)$.
If $v_3$ is on $arc[A_3,u_1]$, then $R$ is on the edge $v_4v_5$, which contradicts $E(\mathcal{P}(v),R)=1$. 
Therefore, $v_3$ is on $arc(u_1,A_4]$.
If $v_3$ is on $arc[w,A_4]$, then $R$ is on the edge $v_3v_4$, which contradicts $E(\mathcal{P}(v),R)=1$.
Hence, $v_3$ is on $arc(u_1,w)$.
We denote a point symmetric to the $x$-axis with respect to point $v_i$ by $w_i$ for $1\leq i \leq 5$.
Since $\triangle PQR$ is symmetric to the $x$-axis, 
$\psi_{\triangle PQR}^{i}(w_5)=w_{5-i}$ for $1\leq i \leq 4$ 
and $\psi_{\triangle PQR}^{5}(w_5)=w_{5}$.
Then, we see that $E(\mathcal{P}(w_5),P)=2, E(\mathcal{P}(w_5),Q)=2$, 
$E(\mathcal{P}(w_5),R)=1$, and $R$ is on the edge $w_2w_1$.
It is possible to show that $w_5\in arc(u_1,w)$  in the same way as $v_3\in arc(u_1,w)$.
It is not difficult to see that $\psi_{\triangle PQR}^5(u)=\psi_Q\circ\psi_R\circ\psi_P\circ\psi_Q\circ\psi_P(u)$
for $u\in arc[u_1,w]$.
We see that $\psi_P'$ is monotonic increasing in $arc(u_1,w)$, 
$\psi_Q'$ is monotonic increasing in $arc(u_2,A_5)$,
$\psi_P'$ is monotonic increasing in $arc(u_3,A_1)$,
$\psi_R'$ is monotonic increasing in $arc(u_4,A_2)$,
and
$\psi_Q'$ is monotonic increasing in $arc(u_5,A_3)$.
Therefore, by the chain rule we see that 
$(\psi_{\triangle PQR}^5)'$ is monotonic increasing in $arc(u_1,w)$.
We suppose that $v_3\ne w_5$.
First, we assume $w_5\in arc(u_1,v_3)$.
Since $\psi_{\triangle PQR}^5(w_5)=w_5$ and $\psi_{\triangle PQR}^5(v_3)=v_3$,
by the mean value theorem there exists $\alpha\in arc(w_5,v_3)$ such that
$(\psi_{\triangle PQR}^5)'(\alpha)=1$.
On the other hand, from Lemma \ref{Isosceles-8} and the mean value theorem,
there exists $\beta\in arc(u_1,w_5)$ such that
$(\psi_{\triangle PQR}^5)'(\beta)>1$, which contadicts that $(\psi_{\triangle PQR}^5)'$ is monotonic increasing in $arc(u_1,w)$.
Similarly, for the case of $v_3\in arc(u_1,w_5)$ we have a contadiction.
Therefore, we have $v_3=w_5$, which implies that
$v_4=w_4$, $v_5=w_3$, $v_1=w_2$, and $v_2=w_1$. 
Hence, $v_4$ is on the $x$-axis,  which implies that
$v_4=A_1$ or $v_4=B_1$. 
Since $E(\mathcal{P}(v_4),R)\ne E(\mathcal{P}(A_1),R)$, 
we have $v_4=B_1=(-1,0)$.
Since $v_3$ is the intersection point with the line $B_1P$ and ${S^1}$ and
$v_5$ is the intersection point with the line $B_1Q$ and ${S^1}$, 
we have 
\begin{align*}
v_3=\left(\dfrac{1-t^2}{t^2+1}, \dfrac{2t}{t^2+1} \right),
v_5=\left(\dfrac{1-t^2}{t^2+1}, -\dfrac{2t}{t^2+1} \right).
\end{align*}
Similarly, we have
\begin{align*}
v_1=\left(-\dfrac{(t^2-1)^2}{t^4+6t^2+1}, \dfrac{4t(t^2+1)}{t^4+6t^2+1} \right),
v_2=\left(-\dfrac{(t^2-1)^2}{t^4+6t^2+1}, -\dfrac{4t(t^2+1)}{t^4+6t^2+1} \right).
\end{align*}
Since $R$ is on the edge $v_1v_2$, we have 
$R=(-\frac{(t^2-1)^2}{t^4+6t^2+1},0)$, which contradicts
$R=(-\frac{t-1}{t+1},0)$ for $0.8<t<1$.
Thus, we have the desired conclusion.
\end{proof}

\begin{lem}\label{Isosceles-12}
Let $0.8<t<1$.
If for $v\in S^1$ $\psi_{\triangle PQR}^5(v)=v$ holds, then
$v= A_i$ for some $i$ with $1\leq i \leq 5$.
\end{lem}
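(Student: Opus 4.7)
The plan is to combine the preceding case analysis in Lemmas \ref{Isosceles-9}, \ref{Isosceles-9-2}, \ref{Isosceles-10}, and \ref{Isosceles-11} with an elementary incidence count on the pentagram $\mathcal{P}(v)$. Specifically, I would introduce the total count $N := \sum_{S \in \{P, Q, R\}} E(\mathcal{P}(v), S)$ and show $5 \le N \le 6$, and then dispose of each value of $N$ separately.

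For the lower bound $N \geq 5$, each of the five edges of $\mathcal{P}(v)$ is a tangent segment produced by the map $\psi_{\triangle PQR}$. By Proposition \ref{fund-2}, every such tangent contains at least one vertex of $\triangle PQR$, so the five edges contribute at least five incidences to $N$. The upper bound $N \leq 6$ is the observation recorded just before Lemma \ref{Isosceles-9}: two consecutive edges of $\mathcal{P}(v)$ cannot both pass through the same vertex of $\triangle PQR$ (otherwise both would lie on the same line through that vertex), and so in the $5$-cycle of edges each vertex of the triangle can appear on at most two edges. Hence $E(\mathcal{P}(v), S) \leq 2$ for each $S \in \{P,Q,R\}$, giving $N \leq 6$.

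With $N \in \{5, 6\}$ established, I would finish by cases. If $N = 6$, Lemma \ref{Isosceles-10} directly yields $v = A_i$ for some $i \in \{1,\dots,5\}$, as required. If $N = 5$, then the triple $(E(\mathcal{P}(v), P), E(\mathcal{P}(v), Q), E(\mathcal{P}(v), R))$ must be a permutation of $(2, 2, 1)$, and the three possible permutations are excluded one by one: Lemma \ref{Isosceles-9} rules out $(2, 1, 2)$, Lemma \ref{Isosceles-9-2} rules out $(1, 2, 2)$, and Lemma \ref{Isosceles-11} rules out $(2, 2, 1)$. Thus $N = 5$ is impossible and the conclusion $v = A_i$ follows.

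The real difficulty has already been absorbed by the earlier work; in particular, Lemma \ref{Isosceles-11} is the deep ingredient and depends on the monotonicity/derivative chain in Lemmas \ref{Isosceles-2}--\ref{Isosceles-7}. The present statement is a short combinatorial wrap-up, so the only point that needs care is verifying the two bounds on $N$ and correctly matching each permutation of $(2,2,1)$ with the lemma that kills it; I do not foresee any further obstacle.
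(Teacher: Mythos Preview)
Your proposal is correct and follows essentially the same route as the paper: split on the value of $N=\sum_{S}E(\mathcal{P}(v),S)$, invoke Lemma~\ref{Isosceles-10} when $N>5$, and eliminate the three $(2,2,1)$ patterns via Lemmas~\ref{Isosceles-9}, \ref{Isosceles-9-2}, \ref{Isosceles-11} when $N=5$. Your write-up is in fact slightly more explicit than the paper's, since you spell out both bounds $5\le N\le 6$ and the reason the $N=5$ case forces a permutation of $(2,2,1)$.
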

\begin{proof}
 We suppose that for $v\in S^1$ $\psi_{\triangle PQR}^5(v)=v$.
If $\sum_{S\in \{P,Q,R\}}E(\mathcal{P}(v),S)=5$, from Lemma \ref{Isosceles-9}, Lemma \ref{Isosceles-9-2},
and Lemma \ref{Isosceles-11}, we see that such $v$ does not exist.
 If $\sum_{S\in \{P,Q,R\}}E(\mathcal{P}(v),S)>5$,
by Lemma \ref{Isosceles-10} there exists $i$ with $1\leq i \leq 5$ such that $v=A_i$.
\end{proof}

The following Lemma states that $\psi_{\triangle PQR}^5$ is a contraction map, with the exception of $A_i$ for $1\leq i\leq5$.
\begin{lem}\label{Isosceles-13}
Let $0.8<t<1$.
Let $v\in arc(A_i,\psi_{\triangle PQR}^3(A_i))$ for $1\leq i \leq 5$.
Then, $\psi_{\triangle PQR}^5(v)\in arc(A_i,v)$.
\end{lem}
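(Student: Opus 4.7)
The plan is to reduce to the case $i=1$ by the dynamical cyclic symmetry and then pin down the direction of motion of $\psi_{\triangle PQR}^5$ using Lemma \ref{Isosceles-12} together with Lemma \ref{Isosceles-8}. Because $\psi_{\triangle PQR}(A_j) = A_{j+1}$ (with indices taken cyclically mod $5$), one has $\psi_{\triangle PQR}^3(A_i) = A_{i+3}$, and the five open arcs $arc(A_i, \psi_{\triangle PQR}^3(A_i))$, $1 \leq i \leq 5$, together with the five points $\{A_1,\ldots,A_5\}$, partition $S^1$. The homeomorphism $\psi_{\triangle PQR}$ cyclically permutes these arcs and commutes with $\psi_{\triangle PQR}^5$, so the conclusion for index $i$ transports under $\psi_{\triangle PQR}$ to the conclusion for index $i+1$. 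Hence it suffices to treat the case $i = 1$, whose arc is $arc(A_1, A_4)$.

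For this case, both endpoints $A_1$ and $A_4$ are fixed points of the orientation-preserving homeomorphism $\psi_{\triangle PQR}^5$, so this map sends $arc(A_1, A_4)$ to itself. By Lemma \ref{Isosceles-12}, the only fixed points of $\psi_{\triangle PQR}^5$ on $S^1$ are $A_1, \ldots, A_5$; in particular the open arc $arc(A_1, A_4)$ is fixed-point-free. Passing to a lift on ${\mathbb R}$, the continuous function $\tilde v \mapsto \overline{\psi_{\triangle PQR}^5}(\tilde v) - \tilde v$ is nowhere zero on the lifted arc and therefore has constant sign. Equivalently, exactly one of the following holds: either $\psi_{\triangle PQR}^5(v) \in arc(A_1, v)$ for every $v \in arc(A_1, A_4)$, or else $\psi_{\triangle PQR}^5(v) \in arc(v, A_4)$ for every such $v$.

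To select the correct alternative I would invoke Lemma \ref{Isosceles-8}, which gives $u_6 = \psi_{\triangle PQR}^5(u_1) \in arc(A_1, u_1)$. The point $u_1$ lies in $arc(A_1, A_4)$: its coordinates $(\alpha_1, \beta_1)$ used in Lemma \ref{Isosceles-2} satisfy $\beta_1 > 0$ (shown there explicitly), and $\alpha_1 > 0$ follows from the construction of $u_1$ as the second intersection of the line $u_2 P$ with $S^1$, with $u_2$ in the second quadrant and $P$ on the positive $y$-axis. Consequently the displacement at $u_1$ is nonzero and points $v \mapsto \psi_{\triangle PQR}^5(v)$ in the direction of $A_1$, forcing the first alternative on all of $arc(A_1, A_4)$, which is the desired conclusion for $i=1$. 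The main (minor) obstacle is just the bookkeeping of the equivariance reduction and the verification that $u_1 \in arc(A_1, A_4)$; both are mechanical once the cyclic structure of the orbit $A_1, \ldots, A_5$ under $\psi_{\triangle PQR}$ is laid out.
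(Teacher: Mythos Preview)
Your proof is correct and takes essentially the same approach as the paper: for the arc $arc(A_1,A_4)$ both arguments combine Lemma~\ref{Isosceles-12} (no interior $5$-periodic points) with Lemma~\ref{Isosceles-8} (the displacement at $u_1$ points toward $A_1$) and a connectedness/IVT step to force $\psi_{\triangle PQR}^5(v)\in arc(A_1,v)$ throughout. The one difference is that the paper dismisses the remaining cases with ``analogously done'', whereas your reduction---observing that $\psi_{\triangle PQR}$ permutes the five arcs $arc(A_i,A_{i+3})$ cyclically and commutes with $\psi_{\triangle PQR}^5$, so the $i=1$ case transports to all $i$---is cleaner and avoids repeating any geometry.
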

\begin{proof}
We suppose that $v\in arc(A_1,A_4)$ and $\psi_{\triangle PQR}^5(v)\notin arc(A_1,v)$.
Then, we see that $\psi_{\triangle PQR}^5(v)\in arc[v,A_4)$.
From Lemma \ref{Isosceles-12}, we see that $\psi_{\triangle PQR}^5(v)\in arc(v,A_4)$.
From Lemma \ref{Isosceles-8}, we see $\psi_{\triangle PQR}^5(u_1)\in arc(A_1,u_1)$.
Since $\psi_{\triangle PQR}^5$ is continuous on $arc(A_1,A_4)$, by the intermediate value theorem
there exists $v'\in arc(A_1,A_4)$ such that 
$\psi_{\triangle PQR}^5(v')=v'$, which contradicts Lemma \ref{Isosceles-12}.
The proof for the other cases is analogously done.

\end{proof}

From  Lemma \ref{Isosceles-13}, the next lemma follows immediately.

\begin{lem}\label{Isosceles-14}
Let $0.8<t<1$.
Let $v\in S^1$ and $v\ne A_i$ for any $i$ with $1\leq i \leq 5$.
Then, for $x\in \pi^{-1}(v)$, there exists $\epsilon>0$ such that
$\overline{\psi_{\triangle PQR}}^5(x)=2+x-\epsilon$.
\end{lem}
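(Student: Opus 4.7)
The plan is to deduce the statement directly from Lemma \ref{Isosceles-13} by translating its circle-level conclusion into the language of the lift. First I would verify that the five open arcs $arc(A_i,\psi_{\triangle PQR}^3(A_i))=arc(A_i,A_{i+3})$, with indices taken modulo $5$ as in Lemma \ref{pentagon-4}, partition $S^1\setminus\{A_1,\ldots,A_5\}$. The cyclic order of the $A_j$'s around $S^1$ (read off from Figure~\ref{fig:pentagon8}) is obtained by iterating $j\mapsto j+3\pmod 5$, so the CCW neighbor of $A_i$ within $\{A_1,\ldots,A_5\}$ is precisely $A_{i+3}$, and these five arcs exhaust the complement.

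Given $v\in S^1$ with $v\ne A_i$ for all $i$, there is a unique $i$ such that $v\in arc(A_i,A_{i+3})$. Fix $x\in\pi^{-1}(v)$ and choose the lift $b\in\pi^{-1}(A_i)$ with $b<x<b+1$. Since $\rho(\psi_{\triangle PQR})=\tfrac{2}{5}$ and $A_i$ is a periodic point of period $5$, Lemma \ref{fund-3} gives
\[
\overline{\psi_{\triangle PQR}}^{\,5}(b)=b+2.
\]
By Lemma \ref{Isosceles-13}, $\psi_{\triangle PQR}^5(v)\in arc(A_i,v)$. The preimage of this open arc under $\pi$ is a disjoint union of intervals, and the component containing $b+2$ is precisely $(b+2,\,x+2)$, because translation by $2$ maps the lift $(b,x)$ of $arc(A_i,v)$ onto $(b+2,x+2)$.

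Monotonicity of $\overline{\psi_{\triangle PQR}}$ together with the continuity of $\overline{\psi_{\triangle PQR}}^{\,5}$ then forces $\overline{\psi_{\triangle PQR}}^{\,5}(x)$ to lie in the component of $\pi^{-1}(\psi_{\triangle PQR}^5(v))$ adjacent to $\overline{\psi_{\triangle PQR}}^{\,5}(b)=b+2$, i.e. in $(b+2,x+2)$. In particular
\[
\overline{\psi_{\triangle PQR}}^{\,5}(x)<x+2,
\]
so setting $\epsilon:=x+2-\overline{\psi_{\triangle PQR}}^{\,5}(x)>0$ yields the desired identity. The only point that requires care is keeping the cyclic order on $S^1$ in sync with the linear order on $\mathbb{R}$; this is handled once and for all by pinning down the specific lifts $b$ and $x$ with $b<x<b+1$ at the outset. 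There is no substantive obstacle beyond this bookkeeping, and the statement for a general $x'\in\pi^{-1}(v)$ then follows by integer translation.
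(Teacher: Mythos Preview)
Your proposal is correct and follows the paper's approach: the paper simply states that the lemma ``follows immediately'' from Lemma~\ref{Isosceles-13}, and you have spelled out precisely this deduction---identifying the partition of $S^1\setminus\{A_1,\ldots,A_5\}$ by the arcs $arc(A_i,A_{i+3})$, lifting to $\mathbb{R}$, and using monotonicity of the lift together with $\overline{\psi_{\triangle PQR}}^{\,5}(b)=b+2$ at the periodic endpoint. One small note: your citation of Lemma~\ref{pentagon-4} for the indexing convention is slightly misplaced, since the $A_i$'s in Section~6 are redefined (with $\psi_{\triangle PQR}(A_i)=A_{i+1}$ stated directly before Lemma~\ref{Isosceles-2}); the mod-$5$ convention you use is nonetheless the correct one.
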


Using Lemma \ref{Isosceles-14}, we can find an upper bound on the height of the triangle for 
$\rho=\frac{2}{5}$.

\begin{lem}\label{Isosceles-15}
Let $0.8<t<1$.
Let $-1<r'<\frac{t-1}{t+1}$.
We put $R'=(r',0)$.
Then, $\rho(\psi_{\triangle PQR'})<\frac{2}{5}$.
\end{lem}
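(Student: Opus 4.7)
The plan is to compare $g := \overline{\psi_{\triangle PQR'}}$ against the base case $f := \overline{\psi_{\triangle PQR}}$, where $R = \left(\frac{t-1}{t+1}, 0\right)$; this base case satisfies $\delta(P,Q,R) = \frac{1}{2}\Delta_1'(P,Q)$, so Lemma~\ref{pentagon-2-1} gives $\rho(\psi_{\triangle PQR}) = \frac{2}{5}$. Since $r' < r < 0$, writing $R$ as $\frac{|r|}{|r'|} R' + \frac{|r'|-|r|}{2|r'|}(P+Q)$ places $R$ in the open interior of $\triangle PQR'$, so $\triangle PQR \subsetneq \triangle PQR'$. Lemma~\ref{fund-4}(2) then gives $g(x) \leq f(x)$ pointwise and $\rho(\psi_{\triangle PQR'}) \leq \frac{2}{5}$, reducing the task to ruling out equality.

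The key claim is that $g^5(x) < x+2$ for every $x \in \mathbb{R}$. Iterating the pointwise bound yields $g^5 \leq f^5$, and Lemma~\ref{Isosceles-14} gives $f^5(x) < x+2$ whenever $\pi(x) \neq A_i$. For the exceptional case $\pi(x) = A_i$, the decisive geometric point is that the line $A_2 A_3$ passes through $R$ (built into the construction of $A_3$ in Lemma~\ref{pentagon-4}); since $R$ now lies in the open interior of $\triangle PQR'$, this line is a secant, not a supporting line, of $\triangle PQR'$. Consequently, this line cannot be the CCW tangent from $A_2$ to $\triangle PQR'$, so $g(x_2) \neq f(x_2) = x_3$, where $x_1, \ldots, x_5$ are successive lifts of $A_1, \ldots, A_5$ chosen so that $f(x_j)=x_{j+1}$ and $f(x_5) = x_1 + 2$. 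Combined with $g \leq f$, this gives the strict inequality $g(x_2) < x_3$.

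Strict monotonicity of $g$ then propagates this single drop through the remaining iterations:
\begin{align*}
g^3(x_1) = g(g(x_2)) &< g(x_3) \leq f(x_3) = x_4, \\
g^4(x_1) &< g(x_4) \leq x_5, \\
g^5(x_1) &< g(x_5) \leq x_1 + 2.
\end{align*}
Because the $f$-orbit of each $A_i$ traces the same $5$-cycle and therefore eventually crosses the edge $A_2 A_3$, the same chain (with a cyclic index shift) yields $g^5(x_i) < x_i + 2$ for every $i$, proving the claim. If $\rho(\psi_{\triangle PQR'})$ were $\frac{2}{5}$, Lemma~\ref{fund-3}(2) would produce an $x_0$ with $g^5(x_0) = x_0 + 2$, contradicting the claim; hence $\rho(\psi_{\triangle PQR'}) < \frac{2}{5}$.

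The main obstacle is precisely the case $\pi(x) = A_i$: pointwise monotonicity alone yields only the non-strict bound, so the strict inequality must be manufactured from the geometric observation that enlarging the triangle moves $R$ from the boundary into the open interior, destroying the tangency at $R$ of every edge of the original pentagram that passed through it.
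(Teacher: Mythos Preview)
Your argument is correct and follows the paper's strategy: both reduce to Lemma~\ref{Isosceles-14} for $\pi(x)\neq A_i$, and for $\pi(x)=A_i$ both exploit that an edge of the pentagram $\{A_j\}$ passes through $R$, which now lies in the open interior of $\triangle PQR'$, so that edge is no longer a support line (the paper uses the edge $A_4A_5$ and concludes $\psi_{\triangle PQR'}(A_4)\notin\{A_j\}$; you use $A_2A_3$ and propagate the strict drop via monotonicity). Two small points: your citation of Lemma~\ref{pentagon-4} is misplaced---the relevant $A_i$ are those introduced just before Lemma~\ref{Isosceles-2}, not those of Lemma~\ref{pentagon-4}; and your displayed equality $g^3(x_1)=g(g(x_2))$ tacitly assumes $g(x_1)=x_2$, which is true (the line $A_1A_2$ passes through $P$, still a vertex and support point of $\triangle PQR'$) but unstated---alternatively replace ``$=$'' by ``$\leq$'' and the chain still yields the strict bound.
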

\begin{proof}
We suppose that $\rho(\psi_{\triangle PQR'})=\frac{2}{5}$.
Then, by Lemma \ref{pentagon-1} there exists $v\in S^1$ such that
$\psi_{\triangle PQR'}^5(v)=v$ and for $x\in \pi^{-1}(v)$, $\overline{\psi_{\triangle PQR'}}^5(x)=2+x$.
We suppose that $v\ne A_i$ for any $i$ with $1\leq i \leq 5$.
Then, from Lemma \ref{Isosceles-14}, 
there exists $\epsilon>0$ such that
$\overline{\psi_{\triangle PQR}}^5(x)=2+x-\epsilon$.
On the other hand,
$\triangle PQR\subset \triangle PQR'$ implies $\overline{\psi_{\triangle PQR'}}(x)\leq\overline{\psi_{\triangle PQR}}(x)$.
Thus, we have a contradiction.
Therefore, we see that $v=A_i$ for some $i$ with $1\leq i \leq 5$.
Then, we have $\{\psi_{\triangle PQR'}^{j-1}(v)\mid 1\leq j \leq 5\}=\{A_j\mid 1\leq j \leq 5\}$.
However, it is easy to see that $\psi_{\triangle PQR'}(A_4)\ne A_j$.
Therefore,  $\rho(\psi_{\triangle PQR'})\ne\frac{2}{5}$.
Since $\triangle PQR\subset \triangle PQR'$, we have $\rho(\psi_{\triangle PQR'})<\frac{2}{5}$.
\end{proof}

\begin{thm}\label{Isosceles-16}
Let $\triangle PQR$ be an isosceles triangle  in $D$ with $d'(P,R)=d'(Q,R)$, $d'(P,Q)>\log 9$ and
$\delta(P,Q,R)>\frac{1}{2}\Delta_1'(P,Q)$.
Then, $\rho(\psi_{\triangle PQR})<\frac{2}{5}$.
\end{thm}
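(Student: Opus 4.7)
The theorem reduces, via an isometric normalization, to the quantitative estimate already established in Lemma \ref{Isosceles-15}. First I would set $t=(e^{d'(P,Q)}-1)/(e^{d'(P,Q)}+1)$, so that $d'(P,Q)>\log 9$ translates exactly to $t>0.8$, placing us in the range covered by the earlier isosceles lemmas. I would then pick the (unique) orientation-preserving hyperbolic isometry $\phi\colon D\to D$ with $\phi(P)=(0,t)$ and $\phi(Q)=(0,-t)$, and put $R'=\phi(R)$. The isosceles hypothesis $d'(P,R)=d'(Q,R)$ forces $R'$ onto the hyperbolic perpendicular bisector of $(0,t)$ and $(0,-t)$, which is the $x$-axis, so $R'=(r',0)$ for some $r'\in(-1,1)$ with $r'\ne 0$. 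By composing $\phi$ with the orientation-preserving $\pi$-rotation $(x,y)\mapsto(-x,-y)$ if necessary---which permutes $(0,t)$ and $(0,-t)$ but sends the normalized triangle to another of the same form with $r'$ replaced by $-r'$---I may assume $r'<0$.

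Next I would convert the hypothesis $\delta(P,Q,R)>\tfrac{1}{2}\Delta_1'(P,Q)$ into the inequality $r'<(t-1)/(t+1)$. Since $\phi$ is an isometry, $\delta(P,Q,R)$ equals $\delta((0,t),(0,-t),(r',0))$, which is simply the hyperbolic distance $d'(O,(r',0))=\tfrac{1}{2}\log\tfrac{1+|r'|}{1-|r'|}$ (the foot of perpendicular from $(r',0)$ to the $y$-axis is the origin). Using $e^{d'(P,Q)}=(1+t)/(1-t)$, a direct algebraic simplification gives $\tfrac{1}{2}\Delta_1'(P,Q)=\tfrac{1}{2}\log(1/t)$. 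The strict inequality $\tfrac{1}{2}\log\tfrac{1+|r'|}{1-|r'|}>\tfrac{1}{2}\log(1/t)$ then rearranges to $|r'|>(1-t)/(1+t)$, and combining with $r'<0$ yields $-1<r'<(t-1)/(t+1)$.

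At this point the hypotheses of Lemma \ref{Isosceles-15} are satisfied for the triangle with vertices $(0,t)$, $(0,-t)$, $(r',0)$, and that lemma directly gives $\rho(\psi_{\triangle (0,t)(0,-t)(r',0)})<\tfrac{2}{5}$. Since $\phi$ is an orientation-preserving isometry of $D$, its boundary extension conjugates $\psi_{\triangle PQR}$ to $\psi_{\triangle (0,t)(0,-t)(r',0)}$, so the two rotation numbers coincide and we conclude $\rho(\psi_{\triangle PQR})<\tfrac{2}{5}$. No substantial obstacle is anticipated: the heavy analytic work is entirely contained in Lemma \ref{Isosceles-15}, and the present theorem amounts to careful bookkeeping of hypotheses under the isometric normalization, the only delicate point being the orientation-preserving reduction to the case $r'<0$.
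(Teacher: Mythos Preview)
Your proposal is correct and follows essentially the same route as the paper's own proof: normalize via a hyperbolic isometry to $P'=(0,t)$, $Q'=(0,-t)$, $R'=(r',0)$, reduce to $r'<0$, translate the height hypothesis into $-1<r'<(t-1)/(t+1)$, and invoke Lemma \ref{Isosceles-15}. If anything, you supply more detail than the paper does---the explicit computation $\tfrac12\Delta_1'(P,Q)=\tfrac12\log(1/t)$ and the orientation-preserving $\pi$-rotation justifying the reduction to $r'<0$ are spelled out, whereas the paper simply asserts both ``without loss of generality.''
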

\begin{proof}
Let
\begin{align}
t=\dfrac{e^{d'(P,Q)}-1}{e^{d'(P,Q)}+1},
\end{align}
which implies $0.8<t<1$.
We put $P'=(0,t)$, and $Q'=(0,-t)$.
Then, since $d'(P,Q)=d'(P',Q')$ holds, 
there exist an isometry $\phi$ on $D$
such that $\phi(P)=P'$ and $\phi(Q)=Q'$. 
We put $R'(r,0)=\phi(R)$.
We note that $\delta(P,Q,R)=\delta(P',Q',R')=d'(O,R')$, where 
$O=(0,0)$.
We suppose that $r<0$ without loss of generality.
Since $\delta(P',Q',R')>\frac{1}{2}\Delta_1'(P',Q')$, 
we have $-1<r<\frac{t-1}{t+1}$.
From Lemma \ref{Isosceles-15}, we have $\rho(\psi_{\triangle P'Q'R'})<\frac{2}{5}$.
Since $\rho(\psi_{\triangle PQR})=\rho(\psi_{\triangle P'Q'R'})$, 
we have $\rho(\psi_{\triangle PQR})<\frac{2}{5}$.
\end{proof}

\section{Conjecture}
 
 The following is our conjecture.
 Theorem \ref{pentagon-8} proves that the sufficient condition stated in the conjecture holds.
 Moreover, as seen in the previous chapter, this conjecture holds for a certain type of isosceles triangle.
\begin{con}\label{Con}
Let $\triangle P_1P_2P_3$ be a triangle in $D$.
Then, $\rho(\psi_{\triangle P_1P_2P_3})=\frac{2}{5}$ if and only if 
there exist some $i,j,k$ with $\{i,j,k\}=\{1,2,3\}$ such that
$\Delta_2'(P_i,P_j)\leq \delta(P_i,P_j,P_k)\leq \frac{1}{2}\Delta_1'(P_i,P_j)$
or $\frac{1}{2}\Delta_1'(P_i,P_j) \leq \delta(P_i,P_j,P_k)\leq \Delta_2'(P_i,P_j)$.
\end{con}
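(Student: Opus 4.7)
Since the sufficient direction is precisely Theorem~\ref{pentagon-8}, my plan is to focus on the necessary direction: assuming $\rho(\psi_{\triangle P_1P_2P_3})=\tfrac{2}{5}$, I need to exhibit a labeling $(i,j,k)$ of $\{1,2,3\}$ for which $\delta(P_i,P_j,P_k)$ lies in the closed interval with endpoints $\Delta_2'(P_i,P_j)$ and $\tfrac{1}{2}\Delta_1'(P_i,P_j)$.

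The first inequality comes essentially for free. By the contrapositive of Theorem~\ref{further-3}, if $\rho=\tfrac{2}{5}$ then there exists at least one labeling $(i_0,j_0,k_0)$ with $\delta(P_{i_0},P_{j_0},P_{k_0})\geq\Delta_2'(P_{i_0},P_{j_0})$. So one of the two bounds can always be realized; what remains is to pin down a labeling where the matching upper bound is \emph{simultaneously} satisfied. A useful preliminary step here is to sharpen the proof of Theorem~\ref{further-2}, which already shows that the periodic pentagram uses one pair of vertices, say $(P_i,P_j)$, on four of its five edges and the remaining vertex $P_k$ on one edge; combined with Lemma~\ref{further-1} this forces $\delta(P_i,P_j,P_k)\geq \Delta_2'(P_i,P_j)$ on exactly that distinguished labeling, not just some arbitrary one.

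The harder step is the dual upper bound, for which the isosceles result Theorem~\ref{Isosceles-16} is the template. My plan is to adapt the contraction argument of Lemmas~\ref{Isosceles-2}--\ref{Isosceles-13}: starting from a general triangle with $\rho=\tfrac{2}{5}$ in which $\delta(P_i,P_j,P_k)>\tfrac{1}{2}\Delta_1'(P_i,P_j)$ holds for the distinguished labeling identified above, I would show via an analogue of Lemma~\ref{Isosceles-6} that the product $\prod_{m=0}^{4}\psi_{\triangle}'(u_m)$ taken along the boundary pentagram of Lemma~\ref{pentagon-2} is strictly less than $1$. Combined with Lemma~\ref{fund-4}(2) applied to the chain $\triangle P_iP_jP_k\subset \triangle P_iP_jR_1$ with $R_1$ the boundary point satisfying $\delta(P_i,P_j,R_1)=\tfrac{1}{2}\Delta_1'(P_i,P_j)$, this would give a contradiction to $\rho=\tfrac{2}{5}$ via Lemmas~\ref{pentagon-1}(4) and~\ref{Isosceles-14}.

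The main obstacle I anticipate is the loss of the reflection symmetry across the $x$-axis used throughout Section~6. In the isosceles case the pairing $v_i=w_{6-i}$ in Lemma~\ref{Isosceles-11} forces the central pentagram vertex onto the axis of symmetry, and this is what ultimately forces $v=A_i$ in Lemma~\ref{Isosceles-12}; without symmetry, one must control up to six periodic orbits and handle every combinatorial pattern of how the three vertices $P_1,P_2,P_3$ distribute themselves along the five pentagram edges. A natural fallback would be a deformation argument: deform $\triangle P_1P_2P_3$ within a level set of $\rho$, using the rigidity hinted at by Lemma~\ref{fund-5}, until two sides become equal and Theorem~\ref{Isosceles-16} applies, then verify the interval condition is invariant along the deformation. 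The central technical input, in either approach, is a non-isosceles replacement for the explicit inequality $\tfrac{7(t+1)}{30t^{2}}<1$ from Lemma~\ref{Isosceles-6}, and I expect this uniform contraction estimate to be the hardest single step.
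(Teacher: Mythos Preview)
The statement you are attempting to prove is labeled a \emph{Conjecture} in the paper and is \emph{not} proven there. The paper establishes only the sufficient direction (this is exactly Theorem~\ref{pentagon-8}), together with the necessary direction in the restricted isosceles setting under the extra size hypotheses $d'(P,Q)>\log 3$ (Theorem~\ref{Isosceles-1}) and $d'(P,Q)>\log 9$ (Theorem~\ref{Isosceles-16}). The general necessary direction remains open, so there is no proof in the paper to compare your proposal against.

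Your outline is a reasonable plan of attack on that open problem, and the obstacles you flag are exactly why the paper stops at a conjecture. The arguments of Section~6 depend essentially on the reflection symmetry across the $x$-axis (the pairing $v_i\leftrightarrow w_{6-i}$ in Lemma~\ref{Isosceles-11} is what forces $v_4$ onto the axis) and on explicit numerical bounds valid only for $0.8<t<1$ (Lemmas~\ref{Isosceles-2}--\ref{Isosceles-5}); neither the contraction estimate nor the uniqueness statement of Lemma~\ref{Isosceles-12} has a known substitute for a general triangle. Your deformation fallback would require knowing that the level set $\{\rho=\tfrac{2}{5}\}$ is path-connected and that the interval condition is invariant along such a path, neither of which is established. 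One small correction: under your hypothesis $\delta(P_i,P_j,P_k)>\tfrac{1}{2}\Delta_1'(P_i,P_j)$ with $R_1$ on the same perpendicular satisfying $\delta(P_i,P_j,R_1)=\tfrac{1}{2}\Delta_1'(P_i,P_j)$, the inclusion is $\triangle P_iP_jR_1\subset\triangle P_iP_jP_k$, not the reverse; this is the direction actually used in Lemma~\ref{Isosceles-15}.
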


\vspace{2cm}

\noindent
Takeo Noda: Faculty of Science, Toho University, JAPAN\\
{\it E-mail address: noda@c.sci.toho-u.ac.jp}\\

\noindent
Shin-ichi Yasutomi: Faculty of Science, Toho University, JAPAN\\
{\it E-mail address: shinichi.yasutomi@sci.toho-u.ac.jp}\\

\noindent
Masamichi Yoshida: Faculty of Science, Osaka Metropolitan University, 3-3-138, Sugimoto, Sumiyoshi, Osaka, 558-
8585, JAPAN\\
{\it E-mail address: masa\_yoshida@omu.ac.jp}

\end{document}